\newcommand{\p}{\partial}
\newcommand{\ord}{\mathop{\rm ord}\nolimits}
\newcommand{\sco}{\mathop{\rm sco}\nolimits}
\newcommand{\wsco}{\mathop{\rm wsco}\nolimits}
\newcommand{\rank}{\mathop{\rm rank}\nolimits}
\newtheorem{theorem}{Theorem}
\newtheorem{lemma}{Lemma}
\newtheorem{corollary}{Corollary}
\newtheorem{proposition}{Proposition}
{\theoremstyle{definition} \newtheorem{definition}{Definition}

\newtheorem{note}{Note}

\begin{document}

\par\noindent {\LARGE\bf
Singular Reduction Operators in Two Dimensions\par}

{\vspace{4mm}\par\noindent \it 
Michael KUNZINGER~$^\dag$ and Roman O. POPOVYCH~$^\ddag$
\par\vspace{2mm}\par}

{\vspace{2mm}\par\it 
\noindent $^{\dag,\ddag}$Fakult\"at f\"ur Mathematik, Universit\"at Wien, Nordbergstra{\ss}e 15, A-1090 Wien, Austria
\par}

{\vspace{2mm}\par\noindent \it
$^{\ddag}$~Institute of Mathematics of NAS of Ukraine, 3 Tereshchenkivska Str., Kyiv-4, Ukraine
 \par}

{\vspace{2mm}\par\noindent $\phantom{^{\dag,\ddag}}$\rm E-mail: \it  $^\dag$michael.kunzinger@univie.ac.at, $^\ddag$rop@imath.kiev.ua
 \par}




{\vspace{5mm}\par\noindent\hspace*{5mm}\parbox{150mm}{\small
The notion of singular reduction operators, i.e., of singular operators of nonclassical (conditional) symmetry, 
of  partial differential equations in two independent variables is introduced. 
All possible reductions of these equations to first-order ODEs are are exhaustively described. 
As examples, properties of singular reduction operators of $(1+1)$-dimensional evolution and wave equations are studied. 
It is shown how to favourably enhance the derivation of
nonclassical symmetries for this class by an in-depth prior study of the corresponding singular vector fields. 
}\par\vspace{3mm}}

\section{Introduction}

Distinctions in kind between Lie symmetries and nonclassical symmetries became apparent already in the first presentation 
of nonclassical symmetries in~\cite{Bluman&Cole1969} 
by the example of the $(1+1)$-dimensional linear heat equation and a particular class of operators. 
In contrast to classical Lie symmetries (see, e.g.,~\cite{Ovsiannikov1982}), the system of determining equations on the coefficients 
of nonclassical symmetry operators of the heat equation was found to be nonlinear and less overdetermined,
and the set of such operators does not possess the structure of an algebra or even a vector space. 

Another difference appears in the procedure of deriving the determining equations. 
Namely, \emph{deriving systems of determining equations for nonclassical symmetries crucially 
depends on the interplay between the operators and the equations under consideration.} 
Thus, for the linear heat equation $u_t=u_{xx}$ 
the general form of nonclassical symmetry operators is $Q=\tau(t,x,u)\p_t+\xi(t,x,u)\p_x+\eta(t,x,u)\p_u$, 
where $(\tau,\xi)\ne(0,0)$,
and there are two essentially different cases of nonclassical symmetries: 
the \emph{regular} case $\tau\ne0$ and the \emph{singular} case $\tau=0$. 
The factorization up to nonvanishing functional multipliers gives the two respective cases 
for the further investigation: 1) $\tau=1$ and 2) $\tau=0$, $\xi=1$. 

The problem of determining the nonclassical symmetries of the linear heat equation was completely solved 
in \cite{Fushchych&Shtelen&Serov&Popovych1992}. 
In the regular case $\tau=1$, after partial integration of the corresponding determining equations, 
we obtain $\xi=g^1(t,x)$ and $\eta=g^2(t,x)u+g^3(t,x)$. 
The functions $g^1$, $g^2$ and $g^3$ satisfy 
a coupled nonlinear system of partial differential equations~\cite{Bluman&Cole1969}, 
which is linearized by a nonlocal transformation to a system of three uncoupled copies 
of the initial equation~\cite{Fushchych&Shtelen&Serov1993en,Fushchych&Shtelen&Serov&Popovych1992,Webb1990}. 
The underlying reason for this phenomenon lies in the interaction between the linearity and the evolution structure 
in the linear heat equation. 
Hence similar results can be obtained only for linear evolution equations
\cite{Fushchych&Popowych1994-1,Popovych1995,Popovych2008a} 
or related linearizable equations~\cite{Mansfield1999}.

The singular case $(\tau,\xi)=(0,1)$ was not considered in~\cite{Bluman&Cole1969}. 
In this case the system of determining equations for nonclassical symmetries 
consists of a single $(1+2)$-dimensional nonlinear evolution equation 
for the unknown function~$\eta$ and, therefore, is not overdetermined.
The determining equation is reduced by a nonlocal transformation to the initial equation with 
an additional implicit independent variable which can be assumed as a parameter~\cite{Fushchych&Shtelen&Serov&Popovych1992}. 
The linearity of the heat equation is inessential here. 
Hence after the case of linear evolutions equations~\cite{Fushchych&Popowych1994-1,Popovych1995} 
this result was extended to general $(1+1)$-dimensional evolution equations \cite{Zhdanov&Lahno1998}, 
multi-dimensional evolution equations~\cite{Popovych1998} 
and even systems of such equations~\cite{Vasilenko&Popovych1999}.
Moreover, it was proved~\cite{Popovych1998}, that, e.g., in the $(1+1)$-dimensional case 
there exists a one-to-one correspondence between one-parametric families of solutions of an evolution equation
and its reduction operators with $(\tau,\xi)=(0,1)$.

The above results raise a number of interesting questions, to wit:
What are possible causes for the existence of singular cases for reduction operators?
Is the conventional partition of sets of reduction operators with the conditions of vanishing and nonvanishing 
coefficients of operators universal or is it appropriate only for certain classes of differential equations, 
e.g., evolution equations?
Can partitions of sets of reduction operators, different from the conventional one, be useful? 
Does there exist an algorithmic way of singling out singular cases for reduction operators before deriving determining equations?
What properties of a partial differential equation and a subset of its reduction operators lead to 
a `no-go' situation (i.e., a single determining equation equivalent, in a certain sense, to the initial equation)?
What is the optimal way of obtaining the determining equation for nonclassical symmetries?
The purpose of the present paper is to answer these and other related questions. 

Algorithms for deriving the determining equations for nonclassical symmetries were discussed, e.g., in 
\cite{Bila&Niesen2004,Clarkson&Mansfield1993a} but the focus of these works was quite different.

The conditional invariance of a differential equation with respect to an operator is equivalent to 
any ansatz associated with this operator reducing the equation to a differential equation with one less 
independent variables~\cite{Zhdanov&Tsyfra&Popovych1999}. 
That is why we use the shorter and more natural term ``reduction operators'' 
instead of ``operators of conditional symmetry'' or ``operators of nonclassical symmetry''
and say that an operator reduces a differential equation 
if the equation is reduced by the corresponding ansatz. 
The direct method of reduction with ansatzes of a special form was first explicitly applied 
in~\cite{Clarkson&Kruskal1989} to the Boussinesq equation 
although reductions by non-Lie ansatzes were already discussed, e.g., in~\cite{Fushchych&Serov1983}. 
A connection between the reduction by generalized ansatzes 
and compatibility with respect to higher-order constraints was found in~\cite{Olver1994}. 

To clarify the main ideas of the proposed framework of singular reduction operators, 
in this first presentation of the subject 
we consider only the case of a single partial differential equation 
in two independent and one dependent variables and a single reduction operator. 
We note, however, that more general cases can be included and will be the subject of forthcoming papers.

Some of the main conclusions of the present paper are: 
\begin{itemize}\itemsep=0ex
\item
Singular cases of reduction operators of a partial differential equation 
are connected with the possibility of lowering the order of this equation on the manifolds determined 
by the corresponding invariant surface conditions in the appropriate jet space.  
Hence the first step of the procedure of finding nonclassical symmetries  
has to consist in studying singular modules of vector fields which lower the order of the equation. 
This step is entirely algorithmic, hence is especially suited to a direct implementation in symmetry-finding computer algebra programs. 
The structure of singular modules of vector fields has to be taken into account under splitting the set 
of reduction operators for factorization.
\item
The weak singularity co-order of a reduction operator~$Q$ coincides 
with the essential order of the corresponding reduced equation and the number of essential parameters in 
the family of $Q$-invariant solutions.
\item
If a single partial differential equation~$\mathcal L$ in two independent variables admits a first co-order singular module~$S$ of vector fields 
then it necessarily possesses first co-order singular reduction operators belonging to~$S$. 
The system of determining equations for such  operators consists of a single partial differential equation~$\mathrm{DE}$ 
in three independent variables of the same order as~$\mathcal L$. 
The equation~$\mathrm{DE}$ is reduced to~$\mathcal L$ by a nonlocal transformation.  
\end{itemize}

The paper is organized as follows: 
The main notions and statements on nonclassical symmetries are presented in Section~\ref{SectionOnDefOfRedOps}. 
Singular vector fields of differential functions and differential equations are defined and studied in 
Sections~\ref{SectionOnSingularVectorFieldsOfDiffFunctionsIn2DCase} 
and~\ref{SectionOnSingularVectorFieldsOfDiffEquationsIn2DCase}, respectively.  
Singular reduction operators of $(1+1)$-dimensional evolution and nonlinear wave equations 
are exhaustively investigated in Sections~\ref{SectionOnExampleOfEvolEqs} and~\ref{SectionOnExampleOfSimpleNWEs}.
It is shown that the conventional partition of sets of reduction operators is natural for evolution equations,
in contrast to the case of nonlinear wave equations. 
A connection between the singularity co-order of reduction operators and the number of parameters in the corresponding families of 
invariant solutions is established in Section~\ref{SectionOnReductionOperatorsAndParametricFamiliesOfSolutions}. 
The final Section~\ref{SectionOnReductionOperatorsOfSingularityCoOrder1} is devoted to 
first co-order singular reduction operators of general partial differential equations in 
two independent and one dependent variables.

\section{Reduction operators of differential equations}
\label{SectionOnDefOfRedOps}

Following~\cite{Fushchych&Tsyfra1987,Fushchych&Zhdanov1992,Popovych&Vaneeva&Ivanova2007,Zhdanov&Tsyfra&Popovych1999}, 
in this section we briefly collect the required notions and results on nonclassical (conditional) symmetries of 
differential equations. 
Also, we argue for the use of the name ``reduction operators'' 
instead of ``nonclassical (conditional) symmetry operators''. 
In accordance with the aims of this paper we restrict our considerations 
to the case of two independent variables and a single reduction operator. 
  
The set of (first-order) differential operators (or vector fields) of the general form
\[
Q=\xi^i(x,u)\p_i+\eta(x,u)\p_u, \quad (\xi^1,\xi^2)\not=(0,0),
\]
will be denoted by $\mathfrak Q$.
In what follows, 
$x$ denotes the pair of independent variables $(x_1,x_2)$ 
and $u$ is treated as the unknown function. 
The index $i$ runs from 1 to $2$,  
and we use the summation convention for repeated indices.
Subscripts of functions denote differentiation with respect to the corresponding variables, 
$\p_i=\p/\p x_i$ and $\p_u=\p/\p u$.
Any function is considered as its zero-order derivative.
All our considerations are carried out in the local setting.

Two differential operators $\widetilde Q$ and $Q$ are called \emph{equivalent} if they differ by a multiplier
which is a non-vanishing function of~$x$ and~$u$:
$\widetilde Q=\lambda Q$, where $\lambda=\lambda(x,u)$, $\lambda\not=0$.
The equivalence of operators will be denoted by $\widetilde Q\sim Q$.
Factoring~$\mathfrak Q$ with respect to this equivalence relation we arrive at~$\mathfrak Q_{\rm f}$.
Elements of~$\mathfrak Q_{\rm f}$ will be identified with their representatives in~$\mathfrak Q$.

The first-order differential function 
\[Q[u]:=\eta(x,u)-\xi^i(x,u)u_i\]
is called the {\it characteristic} of the operator~$Q$.
The characteristic PDE $Q[u]=0$ (also known as the \emph{invariant surface condition}) has
two functionally independent solutions $\zeta(x,u)$ and $\omega(x,u)$.
Therefore, the general solution of this equation can be implicitly represented in the form
$F(\zeta,\omega)=0$, where $F$ is an arbitrary function. 

A differential function $\Theta=\Theta[z]$ of the dependent variables $z=(z^1,\ldots,z^m)$ 
which in turn are functions of a tuple of independent variables $y=(y_1,\ldots,y_n)$ 
will be considered as a smooth function of $y$ and derivatives of~$z$ with respect to~$y$. 
The order $r=\ord \Theta$ of the differential function $\Theta$ equals the maximal order of derivatives involved in $\Theta$. 
More precisely, the differential function $\Theta$ is defined as a function on a subset of the jet space $J^r(y|z)$ \cite{Olver1993}.

The characteristic equations of equivalent operators have the same set of solutions.
Conversely, any family of two functionally independent functions of~$x$ and $u$
is a complete set of integrals of the characteristic equation of a differential operator.
Therefore, there exists a one-to-one correspondence between $\mathfrak Q_{\rm f}$
and the set of families of two functionally independent functions of~$x$ and $u$,
which is factorized with respect to the corresponding equivalence relation.
(Two families of the same number of functionally independent functions of the same arguments are considered equivalent
if any function from one of the families is functionally dependent on functions from the other family.)

Since $(\xi^1,\xi^2)\not=(0,0)$ we can assume without loss of generality that
$\zeta_u\not=0$ and $F_\zeta\not=0$ and resolve the equation~$F=0$ with respect to~$\zeta$: $\zeta=\varphi(\omega)$.
This implicit representation of the function~$u$ is called an \emph{ansatz} corresponding to the operator~$Q$.

Consider an $r$th order differential equation~$\mathcal L$ of the form~$L(x,u_{(r)})=0$
for the unknown function $u$ of two independent variables~$x=(x_1,x_2)$.
Here $L=L[u]=L(x,u_{(r)})$ is a fixed differential function of order~$r$ and 
$u_{(r)}$ denotes the set of all the derivatives of the function $u$ with respect to~$x$
of order not greater than~$r$, including $u$ as the derivative of order zero.
Within the local approach the equation~$\mathcal L$ is treated as an algebraic equation
in the jet space $J^r=J^r(x|u)$ of order $r$ and is identified with the manifold of its solutions in~$J^r$:
\begin{gather*}
\mathcal L=\{ (x,u_{(r)}) \in J^r\, |\, L(x,u_{(r)})=0\}.
\end{gather*}

Denote the manifold defined by the set of all the differential
consequences of the characteristic equation~$Q[u]=0$ in $J^r$
by $\mathcal Q_{(r)}$, i.e.,
\begin{gather*}
\mathcal Q_{(r)}=\{ (x,u_{(r)}) \in J^r\, |\,
D_1^{\alpha}D_2^{\beta}Q[u]=0, \
\alpha,\beta\in\mathbb{N}\cup\{0\},\ \alpha+\beta<r \},
\end{gather*}
where $D_1=\p_1+u_{\alpha+1,\beta}\p_{u_{\alpha\beta}}$ and
$D_2=\p_2+u_{\alpha,\beta+1}\p_{u_{\alpha\beta}}$ are the
operators of total differentiation with respect to the
variables~$x_1$ and~$x_2$, and the variable $u_{\alpha\beta}$ of the jet
space $J^r$ corresponds to the derivative
$\p^{\alpha+\beta}u/\p x_1^{\alpha}\p x_2^{\beta}$.

A precise and rigorous definition of nonclassical (or conditional) symmetry was first suggested in~\cite{Fushchych&Tsyfra1987} 
(see also~\cite{Fushchych&Zhdanov1992,Zhdanov&Tsyfra&Popovych1999}).

\begin{definition}\label{DefinitionOfCondSym}
The differential equation~$\mathcal L$ is called \emph{conditionally invariant} with respect to the operator $Q$ if
the relation $Q_{(r)}L(x,u_{(r)})\bigl|_{\mathcal L\cap\mathcal{Q}^{(r)}}=0$
holds, which is called the \emph{conditional invariance criterion}. 
Then $Q$ is called an operator of \emph{conditional symmetry} (or $Q$-conditional symmetry, nonclassical symmetry, etc.)
of the equation~$\mathcal L$.
\end{definition}

In Definition~\ref{DefinitionOfCondSym} the symbol $Q_{(r)}$ stands for the standard $r$th prolongation
of the operator~$Q$ \cite{Olver1993,Ovsiannikov1982}:
\[
Q_{(r)}=Q+\sum_{0<\alpha+\beta{}\leqslant  r} \eta^{\alpha\beta}\p_{u_{\alpha\beta}},
\quad
\eta^{\alpha\beta}:=D_1^\alpha D_2^\beta Q[u]+\xi^1 u_{\alpha+1,\beta}+\xi^2 u_{\alpha,\beta+1}.
\]

The equation~$\mathcal L$ is conditionally invariant with respect to~$Q$
if and only if the ansatz $\zeta=\varphi(\omega)$ constructed with~$Q$ reduces~$\mathcal L$
to an ordinary differential equation~$\check{\mathcal L}$: $\check L[\varphi]=0$~\cite{Zhdanov&Tsyfra&Popovych1999}. 
Namely, there exist differential functions 
$\smash{\check\lambda=\check\lambda[\varphi]}$ and $\smash{\check L=\check L[\varphi]}$ 
of an order not greater than~$r$ 
(i.e., functions of $\omega$ and derivatives of~$\varphi$ with respect to~$\omega$ up to order~$r$)
such that $L|_{u=\varphi(\omega)}=\check\lambda\check L$. 
The function~$\check\lambda$ does not vanish and may depend on~$\theta$ as a parameter, 
where the value $\theta=\theta(x,u)$ is functionally independent of~$\zeta$ and~$\omega$. 
The differential function~$\check L$ is assumed to be of minimal order~$\check r$ 
which is possibly reached up to the equivalence generated by nonvanishing multipliers. 
Then the reduced equation~$\check{\mathcal L}$ is of essential order~$\check r$. 

This is why we will also call operators of conditional symmetry {\it reduction operators} of~$\mathcal L$.

Another treatment of conditional invariance is that the system $\mathcal L\cap\mathcal Q_{(r)}$ is compatible 
in the sense of not involving any nontrivial differential consequences~\cite{Olver1994,Olver&Vorob'ev1996}. 

The property of conditional invariance is compatible with the equivalence relation on~$\mathfrak Q$ 
\cite{Fushchych&Zhdanov1992,Zhdanov&Tsyfra&Popovych1999}: 

\begin{lemma}\label{LemmaOnEquivFamiliesOfOperators}
If the equation~$\mathcal L$ is conditionally invariant with respect to the operator~$Q$
then it is conditionally invariant with respect to any operator which is equivalent to~$Q$.
\end{lemma}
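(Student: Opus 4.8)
The plan is to deduce the claim from the characterization of conditional invariance recorded just before Definition~\ref{DefinitionOfCondSym}: the equation~$\mathcal L$ is conditionally invariant with respect to~$Q$ if and only if the ansatz $\zeta=\varphi(\omega)$ associated with~$Q$ reduces~$\mathcal L$ to an ordinary differential equation $\check L[\varphi]=0$. Hence it suffices to show that an operator $\widetilde Q\sim Q$ produces the very same ansatz. Writing $\widetilde Q=\lambda Q$ with $\lambda=\lambda(x,u)\ne0$, the characteristic of~$\widetilde Q$ is $\widetilde Q[u]=\widetilde\eta-\widetilde\xi^iu_i=\lambda(\eta-\xi^iu_i)=\lambda\,Q[u]$, so the invariant surface conditions $\widetilde Q[u]=0$ and $Q[u]=0$ have the same solution set. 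In particular, a pair $(\zeta,\omega)$ of functionally independent first integrals of $Q[u]=0$ is also a complete set of first integrals of $\widetilde Q[u]=0$; by the one-to-one correspondence between $\mathfrak Q_{\rm f}$ and equivalence classes of such families, $\widetilde Q$ and~$Q$ determine the same family $(\zeta,\omega)$ up to equivalence, and, after the same choice of resolving $F=0$ for~$\zeta$, the same ansatz $\zeta=\varphi(\omega)$. Therefore the reduced equation obtained from~$\mathcal L$ by this ansatz coincides for both operators, and the ``only if'' implication of the characterization, applied to~$\widetilde Q$, yields conditional invariance of~$\mathcal L$ with respect to~$\widetilde Q$.

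A self-contained alternative would work directly from Definition~\ref{DefinitionOfCondSym}. First I would check that $\mathcal Q_{(r)}$ depends only on the equivalence class of~$Q$: by the Leibniz rule each $D_1^\alpha D_2^\beta(\lambda Q[u])$ equals $\lambda\,D_1^\alpha D_2^\beta Q[u]$ plus a linear combination of the $D_1^{\alpha'}D_2^{\beta'}Q[u]$ with $\alpha'+\beta'<\alpha+\beta$ and coefficients built from derivatives of~$\lambda$; since $\lambda\ne0$, an induction on $\alpha+\beta$ shows that the simultaneous vanishing of all $D_1^\alpha D_2^\beta Q[u]$ with $\alpha+\beta<r$ is equivalent to that of all $D_1^\alpha D_2^\beta(\lambda Q[u])$, i.e.\ $\widetilde{\mathcal Q}_{(r)}=\mathcal Q_{(r)}$. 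Comparing prolongation coefficients then gives $\widetilde\eta^{\alpha\beta}-\lambda\eta^{\alpha\beta}=D_1^\alpha D_2^\beta(\lambda Q[u])-\lambda\,D_1^\alpha D_2^\beta Q[u]$, which by the same Leibniz expansion is a combination of $D_1^{\alpha'}D_2^{\beta'}Q[u]$ with $\alpha'+\beta'<r$; hence $\widetilde Q_{(r)}L=\lambda\,Q_{(r)}L+R$, where $R=\sum(\widetilde\eta^{\alpha\beta}-\lambda\eta^{\alpha\beta})\p_{u_{\alpha\beta}}L$ vanishes on $\mathcal Q_{(r)}$. Restricting to $\mathcal L\cap\widetilde{\mathcal Q}_{(r)}=\mathcal L\cap\mathcal Q_{(r)}$ and using $\lambda\ne0$ then transfers the invariance criterion from~$Q$ to~$\widetilde Q$.

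I do not expect a genuine obstacle here, since the statement is essentially bookkeeping. The only point requiring care in the second approach is the Leibniz expansion producing the factorization $\widetilde Q_{(r)}L=\lambda Q_{(r)}L+R$, together with the verification that every discrepancy term carries a factor $D_1^{\alpha'}D_2^{\beta'}Q[u]$ of order below~$r$; this is routine but notation-heavy, which is why I would present the ansatz-based argument as the main proof and, at most, mention the direct computation only in passing.
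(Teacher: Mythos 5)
Your argument is correct, but note that the paper does not actually prove Lemma~\ref{LemmaOnEquivFamiliesOfOperators}: it imports it from the cited literature, so there is no in-paper proof to match. Of your two routes, the second (direct) one is the standard argument and is the one to prefer: from $\widetilde Q[u]=\lambda Q[u]$ the Leibniz expansion gives $D_1^\alpha D_2^\beta\widetilde Q[u]=\lambda D_1^\alpha D_2^\beta Q[u]+(\text{lower-order terms }D_1^{\alpha'}D_2^{\beta'}Q[u])$, whence $\widetilde{\mathcal Q}_{(r)}=\mathcal Q_{(r)}$ (the reverse inclusion also follows at once from $Q=\lambda^{-1}\widetilde Q$, sparing the induction), and since $\widetilde\xi^i=\lambda\xi^i$ the discrepancy $\widetilde\eta^{\alpha\beta}-\lambda\eta^{\alpha\beta}$ is exactly $D_1^\alpha D_2^\beta(\lambda Q[u])-\lambda D_1^\alpha D_2^\beta Q[u]$, a combination of $D_1^{\alpha'}D_2^{\beta'}Q[u]$ with $\alpha'+\beta'<r$; so $\widetilde Q_{(r)}L=\lambda Q_{(r)}L$ on $\mathcal Q_{(r)}$ and the criterion of Definition~\ref{DefinitionOfCondSym} transfers, using $\lambda\ne0$. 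This is self-contained, works at the level of the definition, and is essentially the proof found in the references the paper cites. The ansatz-based route is also valid, since equivalent operators have proportional characteristics and hence the same invariant surface condition and the same ansatzes (up to the stated equivalence of families of invariants), but it rests on the much deeper reduction theorem of Zhdanov--Tsyfra--Popovych quoted after Definition~\ref{DefinitionOfCondSym} (not before it, as you write), and one should be sure that theorem is being used as an external black box rather than something whose proof already presupposes invariance of conditional symmetry under the equivalence relation. Two small slips to fix: the direction of the characterization you need for $\widetilde Q$ is ``ansatz reduces $\Rightarrow$ conditional invariance'', i.e.\ the ``if'' half, not the ``only if'' half; and the claim that the two operators produce ``the very same ansatz'' should be phrased as ``the same up to the equivalence of families of invariants'', which suffices because the reduction property holds for any ansatz associated with the operator.
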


The set of reduction operators of the equation~$\mathcal L$ is a
subset of $\mathfrak Q$ and so will be denoted by $\mathfrak Q(\mathcal L)$. 
In view of Lemma~\ref{LemmaOnEquivFamiliesOfOperators}, $Q\in \mathfrak Q(\mathcal L)$ and $\widetilde Q\sim Q$ imply
$\widetilde Q\in \mathfrak Q(\mathcal L)$, i.e., $\mathfrak Q(\mathcal L)$ is
closed under the equivalence relation on $\mathfrak Q$. 
Therefore, the factorization of $\mathfrak Q$ with respect to this equivalence
relation can be naturally restricted to~$\mathfrak Q(\mathcal L)$, 
resulting in the subset~$\mathfrak Q_{\rm f}(\mathcal L)$ of $\mathfrak Q_{\rm f}$. 
As in the whole set~$\mathfrak Q_{\rm f}$, we identify elements of~$\mathfrak Q_{\rm f}(\mathcal L)$ with their
representatives in~$\mathfrak Q(\mathcal L)$. 
In this approach the problem of completely describing all reduction operators for $\mathcal L$ is equivalent to finding~$\mathfrak Q_{\rm f}(\mathcal L)$.

The conditional invariance criterion admits the following useful reformulation~\cite{Zhdanov&Tsyfra&Popovych1999}. 

\begin{lemma}\label{LemmaOnReformulationOfCondInvCriterion}
Given a differential equation~$\mathcal L$: $L[u]=0$ of order~$r$ 
and differential functions $\tilde L[u]$ and $\lambda[u]\ne0$ of an order not greater than~$r$ 
such that $L|_{\mathcal Q_{(r)}}=\lambda\,\tilde L|_{\mathcal Q_{(r)}}$, 
an operator~$Q$ is a reduction operator of~$\mathcal L$ if and only if 
the relation $Q_{(\tilde r)}\tilde L\bigl|_{\tilde {\mathcal L}\cap\mathcal{Q}_{(\tilde r)}}=0$ holds, 
where $\tilde r=\ord\tilde L\leqslant r$ and the manifold~$\tilde {\mathcal L}$ is defined in $J^{\tilde r}$ 
by the equation $\tilde L[u]=0$. 
\end{lemma}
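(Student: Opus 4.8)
The plan is to reduce the claimed criterion to the conditional invariance criterion of Definition~\ref{DefinitionOfCondSym} by carefully comparing the restrictions of $L$ and $\tilde L$ to the prolonged invariant surface manifold $\mathcal Q_{(r)}$. First I would observe that the hypothesis $L|_{\mathcal Q_{(r)}}=\lambda\,\tilde L|_{\mathcal Q_{(r)}}$ with $\lambda|_{\mathcal Q_{(r)}}\ne 0$ means that, as subsets of $\mathcal Q_{(r)}$, the manifolds $\mathcal L\cap\mathcal Q_{(r)}$ and $\tilde{\mathcal L}_{(r)}\cap\mathcal Q_{(r)}$ coincide, where $\tilde{\mathcal L}_{(r)}$ denotes the prolongation of $\tilde{\mathcal L}$ to $J^r$ (its differential consequences up to order $r$). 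Indeed, on $\mathcal Q_{(r)}$ one has $L=0$ iff $\tilde L=0$, and differentiating the identity along $\mathcal Q_{(r)}$ (using that $\mathcal Q_{(r)}$ is, up to order $r$, the prolongation of $\mathcal Q_{(r-1)}$ together with $Q[u]=0$) shows the higher differential consequences of $L$ and of $\tilde L$ also agree on $\mathcal Q_{(r)}$, modulo the nonvanishing factor $\lambda$ and its derivatives. Hence $\mathcal L\cap\mathcal Q_{(r)}=\tilde{\mathcal L}_{(r)}\cap\mathcal Q_{(r)}$ as manifolds in $J^r$.

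Next I would apply Definition~\ref{DefinitionOfCondSym} twice. By definition, $Q\in\mathfrak Q(\mathcal L)$ iff $Q_{(r)}L|_{\mathcal L\cap\mathcal Q_{(r)}}=0$. Differentiating the identity $L=\lambda\tilde L+(\text{terms vanishing on }\mathcal Q_{(r)})$ and applying $Q_{(r)}$, then restricting to $\mathcal L\cap\mathcal Q_{(r)}=\tilde{\mathcal L}_{(r)}\cap\mathcal Q_{(r)}$, the product rule gives $Q_{(r)}L=\lambda\,Q_{(r)}\tilde L$ on this manifold (the term $(Q_{(r)}\lambda)\tilde L$ drops since $\tilde L=0$ there, and the correction terms and their $Q_{(r)}$-images vanish on $\mathcal Q_{(r)}$ because $Q_{(r)}$ is tangent to $\mathcal Q_{(r)}$). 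Since $\lambda\ne0$, this shows $Q\in\mathfrak Q(\mathcal L)$ iff $Q_{(r)}\tilde L|_{\tilde{\mathcal L}_{(r)}\cap\mathcal Q_{(r)}}=0$.

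Finally I would pass from the order-$r$ prolongation back to $J^{\tilde r}$. Since $\ord\tilde L=\tilde r\le r$, the expression $Q_{(r)}\tilde L=Q_{(\tilde r)}\tilde L$ lives already in $J^{\tilde r+1}$, and more to the point it is a differential function of order $\le\tilde r+1$ whose restriction we want on $\tilde{\mathcal L}\cap\mathcal Q_{(\tilde r)}$. The point is that $\tilde{\mathcal L}_{(r)}\cap\mathcal Q_{(r)}$ projects onto $\tilde{\mathcal L}\cap\mathcal Q_{(\tilde r)}$ and that the fibers of this projection are nonempty (one can always prolong a point of the lower jet space that satisfies $\tilde L=0$ and the invariant surface condition to one satisfying all differential consequences of both up to order $r$, since $\tilde{\mathcal L}$ has order $\tilde r\le r$). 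Hence the vanishing of $Q_{(\tilde r)}\tilde L$ on $\tilde{\mathcal L}\cap\mathcal Q_{(\tilde r)}$ is equivalent to its vanishing on $\tilde{\mathcal L}_{(r)}\cap\mathcal Q_{(r)}$, which completes the chain of equivalences.

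The main obstacle I anticipate is the bookkeeping in the second and third steps: making precise the claim that $Q_{(r)}$ is tangent to $\mathcal Q_{(r)}$ so that correction terms vanishing on $\mathcal Q_{(r)}$ stay vanishing after applying $Q_{(r)}$, and verifying that no obstruction to prolongation arises when descending from $J^r$ to $J^{\tilde r}$ — i.e.\ that restricting to $\tilde{\mathcal L}\cap\mathcal Q_{(\tilde r)}$ rather than to the fully prolonged manifold does not lose information. Both are ultimately consequences of the structure of $\mathcal Q_{(r)}$ as the prolongation of the single first-order equation $Q[u]=0$, and of $\ord\tilde L\le r$, but they need to be stated carefully.
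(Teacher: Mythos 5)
Your overall skeleton --- writing $L=\lambda\tilde L$ modulo the ideal generated by the functions $D_1^{\alpha}D_2^{\beta}Q[u]$ (Hadamard), using tangency of $Q_{(r)}$ to $\mathcal Q_{(r)}$ together with the product rule, and then descending from $J^r$ to $J^{\tilde r}$ --- is the right one (the paper states the lemma with a citation to Zhdanov, Tsyfra and Popovych rather than a proof). But the intermediate manifold you introduce creates a genuine gap. The set $\mathcal L\cap\mathcal Q_{(r)}$ involves no differential consequences of $L$; by the hypothesis ($\lambda\ne0$) it coincides with $\{\tilde L=0\}\cap\mathcal Q_{(r)}$, the plain zero set of $\tilde L$ in $J^r$, and it does \emph{not} in general coincide with $\tilde{\mathcal L}_{(r)}\cap\mathcal Q_{(r)}$, where $\tilde{\mathcal L}_{(r)}$ is the prolongation of $\tilde{\mathcal L}$. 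For instance, for $L=u_{12}-F(u)$ and $Q=\p_2+\zeta(x,u)\p_u$ with $\zeta_u\ne0$ one has $\tilde L=\zeta_1+\zeta_uu_1-F(u)$ and $\lambda=1$; on $\mathcal L\cap\mathcal Q_{(2)}$ the coordinate $u_{11}$ is completely free, whereas $D_1\tilde L=0$ fixes it (its coefficient $\zeta_u$ is nonzero), so generic points of $\mathcal L\cap\mathcal Q_{(2)}$ do not lie on $\tilde{\mathcal L}_{(2)}$. Your argument of ``differentiating the identity along $\mathcal Q_{(r)}$'' only shows that total-derivative consequences of $L$ and of $\lambda\tilde L$ agree on a higher-order manifold of the invariant surface condition, not that points of $\mathcal L\cap\mathcal Q_{(r)}$ satisfy the consequences of $\tilde L$. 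As a result, the implication from vanishing of $Q_{(r)}\tilde L$ on the smaller set $\tilde{\mathcal L}_{(r)}\cap\mathcal Q_{(r)}$ back to the conditional invariance criterion is unsupported, and in the other direction the surjectivity of the projection of $\tilde{\mathcal L}_{(r)}\cap\mathcal Q_{(r)}$ onto $\tilde{\mathcal L}\cap\mathcal Q_{(\tilde r)}$ that you invoke is a compatibility statement for the joint system $\tilde L=0$, $Q[u]=0$; in the same wave-equation example it holds only when $\zeta$ satisfies the determining equation~\eqref{EqDEForSingularRedOpsOfNWEs}, i.e.\ essentially only when $Q$ already is a reduction operator, so it cannot be taken for granted here.

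The repair is to remove the prolongation of $\tilde{\mathcal L}$ from the argument altogether. Your product-rule and tangency computation, carried out on $\mathcal L\cap\mathcal Q_{(r)}$ itself (where $\tilde L=0$ because $\lambda\ne0$), shows that $Q$ is a reduction operator of $\mathcal L$ if and only if $Q_{(\tilde r)}\tilde L$ vanishes on $\{\tilde L=0\}\cap\mathcal Q_{(r)}$. The descent to $J^{\tilde r}$ is then easy and requires no compatibility: assuming, say, $\xi^2\ne0$, the manifold $\mathcal Q_{(r)}$ is parametrized by $x$ and the derivatives of $u$ with respect to $x_1$ only (as in~\eqref{EqSingularRedOpsExpressionsForU_2}), so it projects onto $\mathcal Q_{(\tilde r)}$, and every point of $\tilde{\mathcal L}\cap\mathcal Q_{(\tilde r)}$ lifts to $\{\tilde L=0\}\cap\mathcal Q_{(r)}$ by choosing the pure $x_1$-derivatives of orders $\tilde r+1,\dots,r$ arbitrarily; since $\tilde L$ and $Q_{(\tilde r)}\tilde L$ are of order at most $\tilde r$, their values are unchanged under the lift, so vanishing on the two sets is equivalent. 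With this substitution, and a one-line justification of the tangency fact (e.g.\ $Q_{(1)}Q[u]=(\eta_u-\xi^i_uu_i)\,Q[u]$ together with the commutation relations of the prolonged field with $D_1$, $D_2$), your argument becomes a complete proof.
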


The classification of reduction operators can be considerably enhanced and simplified by considering 
Lie symmetry and equivalence transformations of (classes of) equations.

\begin{lemma}
Any point transformation of $x$ and $u$ induces a one-to-one mapping of~$\mathfrak Q$ into itself.
Namely, the transformation~$g$: $\tilde x_i=X^i(x,u)$, $\tilde u=U(x,u)$ generates
the mapping~\mbox{$g_*\colon \mathfrak Q\to\mathfrak Q$} such that
the operator~$Q$ is mapped to the operator
$g_*Q=\tilde\xi^i\p_{\tilde x_i}+\tilde\eta\p_{\tilde u}$, where
$\tilde\xi^i(\tilde x,\tilde u)=QX^i(x,u)$,
$\tilde\eta(\tilde x,\tilde u)=QU(x,u)$.
If~$Q'\sim Q$ then  $g_* Q'\sim g_* Q$.
Therefore, the corresponding factorized mapping~$g_{\rm f} \colon\mathfrak Q_{\rm f}\to\mathfrak Q_{\rm f}$ also
is well defined and bijective.
\end{lemma}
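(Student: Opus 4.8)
The plan is to verify each assertion of the lemma in turn, treating it essentially as a bookkeeping exercise with the chain rule. First I would check that $g_*Q$ is a well-defined vector field, i.e., that the coefficients $\tilde\xi^i=QX^i$ and $\tilde\eta=QU$ can legitimately be viewed as functions of $(\tilde x,\tilde u)$. Since $g$ is a point transformation, the map $(x,u)\mapsto(\tilde x,\tilde u)=(X(x,u),U(x,u))$ is (locally) invertible; composing the functions $QX^i(x,u)$ and $QU(x,u)$ with this inverse expresses them in the new coordinates, so $g_*Q$ is a bona fide element of the space of first-order operators. The geometric content here is just that a vector field pushes forward under a diffeomorphism: $g_*Q$ is the unique operator whose action on any function $f(\tilde x,\tilde u)$ equals $Q(f\circ g)$; writing this out with the chain rule reproduces exactly the stated formulas $\tilde\xi^i=QX^i$, $\tilde\eta=QU$.

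Next I would check that $g_*Q$ actually lies in $\mathfrak Q$, that is, that its pair of $\p_{\tilde x_i}$-coefficients $(\tilde\xi^1,\tilde\xi^2)=(QX^1,QX^2)$ is nonzero. This follows because the Jacobian matrix $\p(X^1,X^2)/\p(x_1,x_2)$ together with the column $\p(X^1,X^2)/\p u$ cannot annihilate the nonzero vector $(\xi^1,\xi^2,\eta)$ — otherwise the full Jacobian of $g$ would be singular, contradicting invertibility of the point transformation. So $(\xi^1,\xi^2)\ne(0,0)$ forces $(QX^1,QX^2)\ne(0,0)$, and $g_*Q\in\mathfrak Q$.

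For the compatibility with equivalence: if $Q'=\lambda Q$ with $\lambda=\lambda(x,u)\ne0$, then by linearity of $Q\mapsto Qf$ in the vector field we get $Q'X^i=\lambda\,QX^i$ and $Q'U=\lambda\,QU$, so $g_*Q'=(\lambda\circ g^{-1})\,g_*Q$; since $\lambda\circ g^{-1}$ is a nonvanishing function of $(\tilde x,\tilde u)$, we conclude $g_*Q'\sim g_*Q$. This shows $g_*$ descends to a well-defined map $g_{\rm f}\colon\mathfrak Q_{\rm f}\to\mathfrak Q_{\rm f}$. Bijectivity of both $g_*$ and $g_{\rm f}$ then follows from functoriality: applying the construction to the inverse transformation $g^{-1}$ yields a map $(g^{-1})_*$, and a direct computation (again just the chain rule, using $(g^{-1}\circ g)=\mathrm{id}$) shows $(g^{-1})_*\circ g_*=\mathrm{id}_{\mathfrak Q}$ and $g_*\circ(g^{-1})_*=\mathrm{id}_{\mathfrak Q}$, so $g_*$ is invertible with inverse $(g^{-1})_*$; passing to the factorized maps preserves this.

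None of the steps presents a genuine obstacle — the result is the standard statement that vector fields push forward under diffeomorphisms, adapted to the normalization conventions of $\mathfrak Q$. The only point requiring a little care is the second step, verifying that the push-forward never leaves $\mathfrak Q$ (i.e., that the $\p_{\tilde x_i}$-part stays nonzero); this is where the invertibility of the point transformation is actually used, rather than being a formal consequence of the chain rule alone. Everything else is routine.
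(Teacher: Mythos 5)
The paper itself states this lemma without proof (it is quoted as standard, following \cite{Popovych2000}), so the comparison can only be with the statement and the surrounding conventions. Most of your argument is the routine push-forward computation and is fine: the chain-rule identification $\tilde\xi^i=QX^i$, $\tilde\eta=QU$, the descent to $\mathfrak Q_{\rm f}$ via $g_*(\lambda Q)=(\lambda\circ g^{-1})\,g_*Q$, and invertibility through $(g^{-1})_*\circ g_*=\mathrm{id}$ are all correct and are exactly what one would write.

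The genuine gap is in the one step you yourself single out as needing care. Your claim that the $2\times3$ matrix with rows $\nabla_{(x,u)}X^1,\nabla_{(x,u)}X^2$ ``cannot annihilate the nonzero vector $(\xi^1,\xi^2,\eta)$, otherwise the full Jacobian of $g$ would be singular'' is a non sequitur: a full-rank $2\times3$ matrix always has a one-dimensional kernel, and nonsingularity of the full $3\times3$ Jacobian only yields $(QX^1,QX^2,QU)\ne(0,0,0)$, i.e.\ that $g_*Q$ is a nonzero vector field --- not that its $\p_{\tilde x_i}$-part is nonzero. A concrete counterexample to your argument (and to the literal statement, read with the definition $(\xi^1,\xi^2)\ne(0,0)$ of $\mathfrak Q$): take $Q=\p_{x_1}$ and the hodograph-type point transformation $\tilde x_1=u$, $\tilde x_2=x_2$, $\tilde u=x_1$; then $QX^1=QX^2=0$, $QU=1$, so $g_*Q=\p_{\tilde u}\notin\mathfrak Q$. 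So membership of $g_*Q$ in $\mathfrak Q$ cannot be derived from invertibility of $g$ alone; transformations that mix $u$ into the new independent variables can push an admissible operator onto one equivalent to $\p_{\tilde u}$. Within the paper's framework this is handled by convention rather than by proof: operators equivalent to $\p_u$ are systematically discarded because their invariant surface condition $\eta(x,u)=0$ determines $u$ completely and yields no ansatz (see the discussion of the module $\{Q^\theta\}$ in Section~3), so the induced map is a bijection between the corresponding admissible (local, generic) classes. Your write-up should either add this caveat explicitly or restrict to transformations/points where $(QX^1,QX^2)\ne(0,0)$; as it stands, the justification given for that step is false.
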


\begin{definition}[\cite{Popovych2000}]\label{DefinitionOfEquivInvFamiliesWrtGroup}
Differential operators $Q$ and $\widetilde Q$ are called equivalent
with respect to a group $G$ of point transformations if there exists $g\in G$
for which the operators $Q$ and $g_*\widetilde Q$ are equivalent.
We denote this equivalence by $Q\sim \widetilde Q \bmod G.$
\end{definition}

\begin{lemma}\label{LemmaOnInducedMapping}
Given any point transformation $g$ of an equation~$\mathcal L$ to an equation~$\tilde{\mathcal L}$,
$g_*$ maps~$\mathfrak Q(\mathcal L)$ to~$\mathfrak Q(\tilde{\mathcal L})$ bijectively.
The same is true for the factorized mapping $g_{\rm f}$ from $\mathfrak Q_{\rm f}(\mathcal L)$
to~$\mathfrak Q_{\rm f}(\tilde{\mathcal L})$.
\end{lemma}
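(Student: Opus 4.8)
The plan is to prove the statement in two parts, following the structure suggested by the earlier lemmas. The claim concerns an induced map $g_*$ on sets of reduction operators, so the natural strategy is: (i) show that $g_*$ sends reduction operators of $\mathcal L$ to reduction operators of $\tilde{\mathcal L}$, and (ii) use invertibility of $g$ (hence of $g_*$) to conclude bijectivity; then transfer everything to the factorized setting using the already-established fact that $g_*$ respects the equivalence relation $\sim$.

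For part (i), I would start from the characterization of conditional invariance in terms of reduction: by the result recalled just before Definition~\ref{DefinitionOfCondSym}, $Q\in\mathfrak Q(\mathcal L)$ if and only if the ansatz $\zeta=\varphi(\omega)$ built from $Q$ reduces $\mathcal L$ to an ODE. I would then observe that the point transformation $g$ carries the invariant surface condition $Q[u]=0$ to the invariant surface condition $(g_*Q)[\tilde u]=0$ — this is essentially the content of the preceding lemma on $g_*$, since the characteristic of $g_*Q$ vanishes exactly on the $g$-image of the surface where $Q[u]$ vanishes. Consequently the $g$-image of a $Q$-ansatz for $u$ is a $g_*Q$-ansatz for $\tilde u$, the invariant variables $\zeta,\omega$ transform to invariant variables of $g_*Q$, and the reduced ODE for $\mathcal L$ is mapped by $g$ to the reduced ODE for $\tilde{\mathcal L}$. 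Hence $g_*Q\in\mathfrak Q(\tilde{\mathcal L})$. (Alternatively one can argue directly at the level of jet spaces: $g$ prolongs to a diffeomorphism $g_{(r)}$ of $J^r$ mapping $\mathcal L$ onto $\tilde{\mathcal L}$ and $\mathcal Q_{(r)}$ onto $\tilde{\mathcal Q}_{(r)}$, so the conditional invariance criterion of Definition~\ref{DefinitionOfCondSym} for $Q$ on $\mathcal L$ transforms into the criterion for $g_*Q$ on $\tilde{\mathcal L}$.)

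For part (ii), since $g$ is a point transformation it has an inverse $g^{-1}$, which is again a point transformation mapping $\tilde{\mathcal L}$ to $\mathcal L$; by the previous lemma $(g^{-1})_*=(g_*)^{-1}$, so $(g^{-1})_*$ maps $\mathfrak Q(\tilde{\mathcal L})$ into $\mathfrak Q(\mathcal L)$ by part (i) applied to $g^{-1}$. The two maps are mutually inverse, giving the bijection $g_*\colon\mathfrak Q(\mathcal L)\to\mathfrak Q(\tilde{\mathcal L})$. Finally, by Lemma~\ref{LemmaOnEquivFamiliesOfOperators} the sets $\mathfrak Q(\mathcal L)$ and $\mathfrak Q(\tilde{\mathcal L})$ are closed under $\sim$, and by the preceding lemma $g_*$ is compatible with $\sim$ (i.e.\ $Q'\sim Q$ implies $g_*Q'\sim g_*Q$, and likewise for $g^{-1}$); therefore $g_*$ descends to a well-defined bijection $g_{\rm f}\colon\mathfrak Q_{\rm f}(\mathcal L)\to\mathfrak Q_{\rm f}(\tilde{\mathcal L})$.

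The main obstacle is really only the bookkeeping in part (i): one must be careful that ``$g$ maps $\mathcal L$ to $\tilde{\mathcal L}$'' is understood in the sense that the prolonged transformation $g_{(r)}$ carries the solution manifold of $L[u]=0$ onto that of $\tilde L[\tilde u]=0$ (not merely that $L$ and $\tilde L$ are proportional after substitution), and that the same prolongation sends $\mathcal Q_{(r)}$ to $\tilde{\mathcal Q}_{(r)}$ — the latter because total differentiation is intertwined by prolonged point transformations. Once this compatibility of the prolongation with both the equation manifold and the characteristic-consequence manifold is recorded, the conditional invariance criterion transports verbatim and the rest is formal.
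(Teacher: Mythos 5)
Your argument is correct: transporting the invariant surface condition (equivalently, the prolonged criterion of Definition~\ref{DefinitionOfCondSym}) under the prolonged point transformation gives $g_*Q\in\mathfrak Q(\tilde{\mathcal L})$, invertibility of $g$ together with $(g^{-1})_*\circ g_*=\mathrm{id}$ gives bijectivity, and compatibility of $g_*$ with $\sim$ plus Lemma~\ref{LemmaOnEquivFamiliesOfOperators} gives the factorized statement. Note that the paper states this lemma without proof, as part of the background recalled from the cited literature, so there is no in-paper argument to compare with; your proof is the standard one and the bookkeeping caveats you flag (that $g$ maps the equation manifold, and hence its prolongation maps $\mathcal Q_{(r)}$ to the manifold of $g_*Q$, with the multiplier handled as in Lemma~\ref{LemmaOnReformulationOfCondInvCriterion}) are exactly the right points to record.
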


\begin{corollary}\label{CorollaryOnEquivReductionOperatorWrtSymGroup}
Let $G$ be the point symmetry group of an equation~$\mathcal L.$ Then the equivalence of operators
with respect to the group $G$ generates equivalence relations in~$\mathfrak Q(\mathcal L)$
and in~$\mathfrak Q_{\rm f}(\mathcal L)$.
\end{corollary}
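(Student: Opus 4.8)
The plan is to verify directly that the relation $\sim\bmod G$ introduced in Definition~\ref{DefinitionOfEquivInvFamiliesWrtGroup} restricts to an equivalence relation on the subset $\mathfrak Q(\mathcal L)$, and then to push this statement down to the factor set $\mathfrak Q_{\rm f}(\mathcal L)$. The essential input throughout is that $G$ is a \emph{group}: it contains the identity transformation, is closed under composition, and is closed under passing to inverses. Two auxiliary facts will be used repeatedly. First, the pushforward construction is functorial, $(g\circ h)_*=g_*\circ h_*$ and $(g^{-1})_*=(g_*)^{-1}$, which is immediate from the defining formulas $\tilde\xi^i=QX^i$, $\tilde\eta=QU$ via the chain rule. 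Second, the preceding lemma on induced maps of point transformations guarantees that $Q'\sim Q$ implies $g_*Q'\sim g_*Q$, so each $g_*$ descends to a well-defined bijection $g_{\rm f}$ of $\mathfrak Q_{\rm f}$, and Lemma~\ref{LemmaOnInducedMapping} guarantees that for $g\in G$ the map $g_*$ sends $\mathfrak Q(\mathcal L)$ onto itself.

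First I would check closure of $\mathfrak Q(\mathcal L)$ under $\sim\bmod G$: if $Q\in\mathfrak Q(\mathcal L)$ and $Q\sim g_*\widetilde Q$ for some $g\in G$, then $g_*\widetilde Q\sim Q$ gives $g_*\widetilde Q\in\mathfrak Q(\mathcal L)$ by Lemma~\ref{LemmaOnEquivFamiliesOfOperators}; applying $(g^{-1})_*$, which maps $\mathfrak Q(\mathcal L)$ to itself by Lemma~\ref{LemmaOnInducedMapping} since $g^{-1}\in G$ is again a point symmetry of $\mathcal L$, yields $\widetilde Q=(g^{-1})_*g_*\widetilde Q\in\mathfrak Q(\mathcal L)$. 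Hence $\sim\bmod G$ is a genuine binary relation on $\mathfrak Q(\mathcal L)$.

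Next I would verify the three axioms on $\mathfrak Q(\mathcal L)$. Reflexivity: take $g$ to be the identity of $G$, so $g_*Q=Q\sim Q$. Symmetry: from $Q\sim g_*\widetilde Q$ apply $(g^{-1})_*$ and use preservation of $\sim$ to get $(g^{-1})_*Q\sim\widetilde Q$, i.e.\ $\widetilde Q\sim(g^{-1})_*Q$ with $g^{-1}\in G$. Transitivity: from $Q_1\sim g_*Q_2$ and $Q_2\sim h_*Q_3$ with $g,h\in G$, apply $g_*$ to the second relation to obtain $g_*Q_2\sim g_*h_*Q_3=(g\circ h)_*Q_3$, whence $Q_1\sim(g\circ h)_*Q_3$ with $g\circ h\in G$. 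This settles the claim for $\mathfrak Q(\mathcal L)$. For the factor set, note that plain equivalence $\sim$ is exactly the special case of $\sim\bmod G$ with $g$ the identity; therefore each $\sim\bmod G$-class is a union of $\sim$-classes, the relation is constant on $\sim$-classes, and it projects to a well-defined relation on $\mathfrak Q_{\rm f}$ which, expressed through the bijections $g_{\rm f}$, inherits the three axioms from the computations above and restricts to $\mathfrak Q_{\rm f}(\mathcal L)$ by the closure property already established.

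I do not expect a substantial obstacle here; the argument is a routine but careful bookkeeping exercise. The only points demanding attention are keeping track of the group structure of $G$ — discarding it would break both symmetry (need inverses) and transitivity (need closure under composition) — and respecting the composition order in the functoriality of $g\mapsto g_*$, together with the fact, supplied by the earlier lemmas, that pushforwards both preserve the equivalence $\sim$ and carry $\mathfrak Q(\mathcal L)$ to itself when $g\in G$.
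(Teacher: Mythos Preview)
Your argument is correct and complete; the paper itself states this corollary without proof, treating it as an immediate consequence of Lemma~\ref{LemmaOnInducedMapping} and the preceding lemma on pushforwards. Your careful verification of closure, reflexivity, symmetry, and transitivity using the group structure of~$G$ and the functoriality of~$g\mapsto g_*$ is exactly what underlies the paper's tacit claim.
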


Consider the class~$\mathcal L|_{\mathcal S}$ of equations~$\mathcal L_\theta$: $L(x,u_{(r)},\theta)=0$
parameterized with the parameter-functions~$\theta=\theta(x,u_{(r)}).$
Here $L$ is a fixed function of $x$, $u_{(r)}$ and $\theta.$
The symbol~$\theta$~denotes the tuple of arbitrary (parametric) differential functions
$\theta(x,u_{(r)})=(\theta^1(x,u_{(r)}),\ldots,\theta^k(x,u_{(r)}))$
running through the set~${\mathcal S}$ of solutions of the system~$S(x,u_{(r)},\theta_{(q)}(x,u_{(r)}))=0$.
This system consists of differential equations on $\theta$,
where $x$ and $u_{(r)}$ play the role of independent variables
and $\theta_{(q)}$ stands for the set of all the derivatives of $\theta$ of order not greater than $q$.
In what follows we call the functions $\theta$ arbitrary elements.
Denote the point transformation group preserving the
form of the equations from~$\mathcal L|_{\mathcal S}$ by $G^\sim$.

Let $P$ denote the set of the pairs consisting of an
equation $\mathcal L_\theta$ from~$\mathcal L|_{\mathcal S}$ and an
operator~$Q$ from~$\mathfrak Q(\mathcal L_\theta)$. In view of
Lemma~\ref{LemmaOnInducedMapping}, the action of transformations from
the equivalence group~$G^\sim$ on $\mathcal L|_{\mathcal S}$ and
$\{\mathfrak Q(\mathcal L_{\theta})\,|\,\theta\in{\mathcal S}\}$
together with the pure equivalence relation of differential
operators naturally generates an equivalence relation on~$P$.

\begin{definition}\label{DefinitionOfEquivOfRedOperatorsWrtEquivGroup}
Let $\theta,\theta'\in{\mathcal S}$,
$Q\in\mathfrak Q(\mathcal L_\theta)$, $Q'\in\mathfrak Q(\mathcal L_{\theta'})$.
The pairs~$(\mathcal L_\theta,Q)$ and~$(\mathcal L_{\theta'},Q')$
are called {\em $G^\sim$-equivalent} if there exists $g\in G^\sim$
such that $g$ transforms the equation~$\mathcal L_\theta$ to the equation~$\mathcal L_{\theta'}$, and
$Q'\sim g_*Q$.
\end{definition}

The classification of reduction operators with respect to~$G^\sim$ will be understood as
the classification in~$P$ with respect to this equivalence relation, a
problem which can be investigated similar to the usual group classification in classes
of differential equations.
Namely, we construct firstly the reduction operators that are defined for all values of $\theta$.
Then we classify, with respect to $G^\sim$, the values of $\theta$ for which
the equation~$\mathcal L_\theta$ admits additional reduction operators.

\section{Singular vector fields of differential functions}\label{SectionOnSingularVectorFieldsOfDiffFunctionsIn2DCase}

Consider a vector field~$Q=\xi^i(x,u)\p_i+\eta(x,u)\p_u$ with $(\xi^1,\xi^2)\ne(0,0)$, defined in the space $(x,u)$, 
and a differential function~$L=L[u]$ of order $\ord L=r$ 
(i.e., a smooth function of $x=(x_1,x_2)$ and derivatives of~$u$ of orders up to~$r$).

\begin{definition}\label{DefinitionOfSingularVectorFild}
The vector field~$Q$ is called \emph{singular} for the differential function~$L$ 
if there exists a differential function $\tilde L=\tilde L[u]$ of an order less than~$r$
such that $L|_{\mathcal Q_{(r)}}=\tilde L|_{\mathcal Q_{(r)}}$. 
Otherwise $Q$ is called a \emph{regular} vector field for the differential function~$L$. 
If the minimal order of differential functions 
whose restrictions on $\mathcal Q_{(r)}$ coincide with $L|_{\mathcal Q_{(r)}}$ equals $k$ ($k<r$)
then the vector field~$Q$ is said to be of \emph{singularity co-order $k$} for the differential function~$L$. 
The vector field~$Q$ is called \emph{ultra-singular} for the differential function~$L$ if $L|_{\mathcal Q_{(r)}}\equiv0$. 
\end{definition}

For convenience, the singularity co-order of ultra-singular vector fields 
and the order of identically vanishing differential functions are defined to equal~$-1$. 
Regular vector fields for the differential function~$L$ are defined to have singularity co-order $r=\ord L$.
The singularity co-order of a vector field~$Q$ for a differential function~$L$ will be denoted by $\sco_LQ$. 

If $Q$ is a singular vector field for~$L$ then any vector field equivalent to~$Q$ is singular for~$L$  
with the same co-order of singularity.

A function $\tilde L$ satisfying the conditions of Definition~\ref{DefinitionOfSingularVectorFild} 
can be constructively found. 
Namely, without loss of generality we can suppose that the coefficient~$\xi^2$ of~$\p_2$ in~$Q$ is nonzero. 
Then any derivative of~$u$ of order not greater than~$r$ can be expressed, on the manifold~$\mathcal Q_{(r)}$, 
via derivatives of~$u$ with respect to $x_1$ only. 
For example, for the first- and second-order derivatives we have
\begin{equation}\begin{split}\label{EqSingularRedOpsExpressionsForU_2}
&u_2=\hat\eta-\hat\xi u_1,\\
&u_{12}=\hat\eta_1-\hat\xi_1u_1+\hat\eta_uu_1-\hat\xi_uu_1^2-\hat\xi u_{11},\\
&u_{22}=\hat\eta_2-\hat\xi_2u_1+(\hat\eta_u-\hat\xi_uu_1)(\hat\eta-\hat\xi u_1)
-\hat\xi(\hat\eta_1-\hat\xi_1u_1+\hat\eta_uu_1-\hat\xi_uu_1^2-\hat\xi u_{11}),
\end{split}\end{equation}
where $\hat\xi=\xi^1/\xi^2$ and $\hat\eta=\eta/\xi^2$. 
After substituting the expressions for the derivatives into~$L$, we obtain a differential function $\hat L$ 
depending only on $x$, $u$ and derivatives of~$u$ with respect to~$x_1$. 
We will call $\hat L$ a \emph{differential function associated with~$L$ on the manifold~$\smash{\mathcal Q_{(r)}}$}. 
The vector field~$Q$ is singular for the differential function~$L$ if and only if the order of $\hat L$ is less than~$r$. 
The co-order of singularity of~$Q$ equals the order of $\hat L$. 
The vector field~$Q$ is ultra-singular if and only if $\hat L\equiv0$. 
Therefore, testing that a vector field is singular for a differential function with two independent variables 
is realized in an entirely algorithmic procedure 
and can be easily included in existing programs for symbolic calculations of symmetries. 

Consider the two-dimensional module $\{Q^\theta=\theta^iQ^i\}$ of vector fields over the ring of smooth functions of $(x,u)$ 
generated by the vector fields $Q^i=\xi^{ij}(x,u)\p_j+\eta^i(x,u)\p_u$, where $\rank(\xi^{i1},\xi^{i2},\eta^i)=2$.
In the remainder of this section 
the parameter tuple $\theta=(\theta^1,\theta^2)$ runs through the set of pairs of smooth functions depending on $(x,u)$,
and $i$ and $j$ run from 1 to 2. 

\begin{definition}\label{DefinitionOfSingularModuleOfVectorFilds}
The module $\{Q^\theta\}$ is called \emph{singular} for the differential function~$L$ 
if for any $\theta$ with $(\theta^i\xi^{i1},\theta^i\xi^{i2})\ne(0,0)$ the vector field $Q^\theta$ is singular for~$L$.
The \emph{singularity co-order of the module} $\{Q^\theta\}$ coincides with 
the maximum of the singularity co-orders of its elements. 
\end{definition}

By a point transformation, one of the basis vector fields, e.g. $Q^2$, can be reduced to $\p_u$ 
(transforming $L$ simultaneously with $Q^1$ and $Q^2$.)
Then $(\xi^{11},\xi^{12})\ne(0,0)$, and up to permutation of independent variables 
we can assume $\xi^{12}\ne0$ and, therefore, set $\eta^1=0$ and $\xi^{12}=1$ by a change of basis. 
Any vector field from the module $\{Q^\theta\}$ with a nonzero value of~$\theta^1$ is equivalent to 
the vector field $Q^1+\zeta Q^2$, where $\zeta=\theta^2/\theta^1$. 
All the other vector fields from $\{Q^\theta\}$ 
(which have $\theta^1=0$ and, therefore, are equivalent to $\p_u$) can be neglected 
since each of them leads to the equation $\theta^2(x,u)=0$ which completely determines~$u$ and therefore, 
does not give an ansatz for~$u$.

This justifies why, up to point transformations, it suffices to study only singular sets of vector fields 
of the form $\{Q^\zeta=\xi\p_1+\p_2+\zeta\p_u\}$, with $\xi$ a fixed smooth function of $(x,u)$ 
and $\zeta$ running through all such functions. 
The latter form of singular sets of vector fields will be called \emph{reduced}. 

Further simplification depends on whether the module is closed under the Lie bracket. 
In case it is, it can be assumed to be generated by two commuting vector fields 
which can be simultaneously reduced by a point transformation to shift operators, e.g., $Q^1=\p_2$ and $Q^2=\p_u$. 
Thus in the reduced form $\xi$ can be put to $0$. 
If the module is not closed under the Lie bracket, we have $\xi_u\not=0$ in the reduced form. 
After the point transformation $\tilde x_i=x_i$ and  $\tilde u=\xi$ and a change of basis, 
we obtain the basis $\tilde Q^1=\tilde u\p_{\tilde 1}+\p_{\tilde 2}$ and $\tilde Q^2=\p_{\tilde u}$. 
Hence: 

\begin{proposition}\label{PropositionOnBasesOf2DModulesOf2DVectorFields}
In any two-dimensional module of vector fields in the space of three variables $(x_1,x_2,u)$, 
any basis vector fields $Q^1$ and $Q^2$ can be locally reduced, by point transformations, to the form 
$Q^1=\p_2$ (resp. $Q^1=u\p_1+\p_2$) and $Q^2=\p_u$ if the module is closed (resp. not closed) 
with respect to the Lie bracket of vector fields.
\end{proposition}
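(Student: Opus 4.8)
The plan is to argue that both normal forms can be reached by a finite chain of point transformations and changes of basis of the module, treating the two cases (closed/not closed under the Lie bracket) in parallel up to a certain point and then splitting. First I would observe that it suffices to produce, at each stage, either a point transformation of the ambient space $(x_1,x_2,u)$ or a change of basis $(Q^1,Q^2)\mapsto(a^i_jQ^j)$ with $\det(a^i_j)\ne0$ over the ring of smooth functions, since the composition of such operations is again admissible and preserves the module. The starting point is the discussion already carried out in the text preceding the statement: a two-dimensional module is locally spanned by $Q^1=\xi^{1j}\p_j+\eta^1\p_u$ and $Q^2=\xi^{2j}\p_j+\eta^2\p_u$ with $\rank(\xi^{ij},\eta^i)=2$, and one of the basis fields, say $Q^2$, can be straightened to $\p_u$ by a point transformation (this is the classical flow-box theorem for a single nonvanishing vector field, applied to $Q^2$).

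Next I would bring $Q^1$ into a convenient shape relative to $Q^2=\p_u$. Since $\rank(\xi^{1j},\eta^1,\xi^{2j},\eta^2)=2$ and $Q^2=\p_u$, we must have $(\xi^{11},\xi^{12})\ne(0,0)$; permuting $x_1$ and $x_2$ if necessary we may take $\xi^{12}\ne0$, and then replacing $Q^1$ by $(\xi^{12})^{-1}(Q^1-\eta^1\,Q^2)$ — a legitimate change of basis, since it is an invertible transformation of the pair and $\p_u=Q^2$ absorbs the $\eta^1$ term — we obtain $Q^1=\hat\xi\,\p_1+\p_2$ with $\hat\xi=\xi^{11}/\xi^{12}$ a smooth function of $(x_1,x_2,u)$, while $Q^2=\p_u$ is unchanged. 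This is exactly the reduced form $\{Q^\zeta=\hat\xi\p_1+\p_2+\zeta\p_u\}$ described in the excerpt, so at this point the problem is reduced to normalizing the single function $\hat\xi$.

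Now the case split. If the module is closed under the Lie bracket, then $[Q^1,Q^2]$ lies in the module; computing $[Q^1,\p_u]=-\hat\xi_u\,\p_1$, closedness forces $\hat\xi_u=0$ (since $\p_1$ is in the module only if it is a function-combination of $\hat\xi\p_1+\p_2$ and $\p_u$, which is impossible unless the $\p_2$-component vanishes, i.e.\ the coefficient is zero). Hence $\hat\xi=\hat\xi(x_1,x_2)$, the two fields $Q^1=\hat\xi\p_1+\p_2$ and $Q^2=\p_u$ commute, and a point transformation in the $(x_1,x_2)$-plane alone (straightening the nonvanishing planar field $\hat\xi\p_1+\p_2$ to $\p_2$, which leaves $\p_u$ untouched) yields $Q^1=\p_2$, $Q^2=\p_u$; alternatively one invokes the simultaneous-straightening theorem for two commuting independent vector fields. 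If the module is not closed, then by the same bracket computation $\hat\xi_u\not=0$ in the reduced form. Apply the point transformation $\tilde x_i=x_i$, $\tilde u=\hat\xi(x_1,x_2,u)$, which is a local diffeomorphism precisely because $\hat\xi_u\ne0$; under it $\p_u=\hat\xi_u\,\p_{\tilde u}$, so $Q^2$ becomes (up to the nonvanishing factor $\hat\xi_u$, which a further change of basis removes) $\p_{\tilde u}$, while $Q^1=\hat\xi\p_1+\p_2$ pushes forward to $\tilde u\,\p_{\tilde 1}+\p_{\tilde 2}+(\text{some})\p_{\tilde u}$; subtracting the appropriate multiple of $\tilde Q^2=\p_{\tilde u}$ gives $\tilde Q^1=\tilde u\,\p_{\tilde 1}+\p_{\tilde 2}$, completing the second normal form.

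The main obstacle is not any single computation but keeping the bookkeeping honest: one must verify at each step that the operation used is genuinely an allowed one (a point transformation of $(x_1,x_2,u)$, or an invertible change of basis over the function ring) and that it does not disturb the normalizations already achieved — in particular that straightening $Q^2$ to $\p_u$ can be done first and then all remaining freedom consists of transformations fixing $\p_u$, and that the planar straightening in the closed case does not reintroduce $u$-dependence. I would also note the one genuinely invariant input, namely that $\hat\xi_u=0$ versus $\hat\xi_u\ne0$ is equivalent to the module being closed versus not closed under the Lie bracket; this equivalence is what makes the dichotomy in the statement well-posed and independent of the chosen reduction, and it should be stated explicitly as the pivot of the proof.
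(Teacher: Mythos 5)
Your proposal is correct and follows essentially the same route as the paper's own derivation (given in the discussion immediately preceding the proposition): straighten $Q^2$ to $\p_u$, use a change of basis to bring $Q^1$ to the reduced form $\hat\xi\p_1+\p_2$, and then split on $\hat\xi_u=0$ versus $\hat\xi_u\ne0$, straightening the planar field in the closed case and taking $\tilde u=\hat\xi$ in the non-closed case. Your explicit bracket computation identifying $\hat\xi_u=0$ with closedness is exactly the pivot the paper uses implicitly, so no further changes are needed.
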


\begin{theorem}\label{TheoremOn2DSingularVectorFieldsOfCoOrderK}
A differential function~$L$ with one dependent and two independent variables possesses
a $k$th co-order singular two-dimensional module of vector fields if and only if 
it can be represented, up to point transformations, in the form 
\begin{equation}\label{Eq2DWithSingularVectorFieldsOfCoOrderK}
L=\check L(x,\Omega_{r,k}),
\end{equation}
where $\Omega_{r,k}=\bigl(\omega_\alpha=D_1^{\alpha_1}(\xi D_1+D_2)^{\alpha_2}u,\alpha_1\leqslant k,\alpha_1+\alpha_2\leqslant r\bigr)$, 
$\xi\in\{0,u\}$, 
and $\check L_{\omega_\alpha}\ne0$ for some $\omega_\alpha$ with $\alpha_1=k$.
\end{theorem}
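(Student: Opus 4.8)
The plan is to prove both implications using the explicit algorithmic description of associated differential functions developed right before the statement, together with Proposition~\ref{PropositionOnBasesOf2DModulesOf2DVectorFields} to normalize the module.

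For the ``if'' direction, suppose $L=\check L(x,\Omega_{r,k})$ with $\xi\in\{0,u\}$ as in~\eqref{Eq2DWithSingularVectorFieldsOfCoOrderK}. Consider the module with basis $Q^1=\p_2$ (resp.\ $Q^1=u\p_1+\p_2$) and $Q^2=\p_u$; its reduced form is $\{Q^\zeta=\xi\p_1+\p_2+\zeta\p_u\}$. For such a $Q^\zeta$, the manifold $\mathcal Q_{(r)}$ is defined by the invariant surface condition $\zeta-\xi u_1-u_2=0$ and its differential consequences, so on $\mathcal Q_{(r)}$ every derivative of $u$ is expressible through derivatives with respect to $x_1$ only, exactly via the recursion in~\eqref{EqSingularRedOpsExpressionsForU_2}. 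The key point is that the total-differentiation operators $D_1$ and $\xi D_1+D_2$ both preserve $\mathcal Q_{(r)}$ (the second is, modulo $\mathcal Q_{(r)}$, differentiation ``along'' the vector field), and $(\xi D_1+D_2)u=\xi u_1+u_2=\zeta$ on $\mathcal Q_{(r)}$ has order zero. Hence each $\omega_\alpha=D_1^{\alpha_1}(\xi D_1+D_2)^{\alpha_2}u$ restricts, on $\mathcal Q_{(r)}$, to a differential function of order at most $\alpha_1\le k$: one applies $\xi D_1+D_2$ first, which never raises order above the current $x_1$-order, and then $D_1^{\alpha_1}$ raises it by at most $\alpha_1$. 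Consequently $\hat L$, the associated differential function, has order at most $k$, so every $Q^\zeta$ in the module is singular of co-order $\le k$; that the co-order equals $k$ for suitable $\zeta$ follows from the hypothesis $\check L_{\omega_\alpha}\ne0$ for some $\omega_\alpha$ with $\alpha_1=k$, since that $\omega_\alpha$ genuinely contains $u_{\underbrace{1\cdots1}_{k}}$ and nearby higher mixed derivatives with nonzero coefficient, and these cannot all cancel generically. Thus the module is $k$th co-order singular, and by the equivalence-invariance remarks (Lemma~\ref{LemmaOnEquivFamiliesOfOperators}-style, and the point-transformation lemmas) the same holds before the normalizing point transformation.

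For the ``only if'' direction, start from a $k$th co-order singular two-dimensional module for $L$. By Proposition~\ref{PropositionOnBasesOf2DModulesOf2DVectorFields} we may assume, after a point transformation, that the module is in reduced form with $\xi\in\{0,u\}$, i.e.\ generated by $\p_u$ and $\xi\p_1+\p_2$. Now compute the associated differential function $\hat L$ for a generic $Q^\zeta=\xi\p_1+\p_2+\zeta\p_u$ using~\eqref{EqSingularRedOpsExpressionsForU_2}: solving the invariant surface conditions expresses every $u_{\alpha\beta}$ with $\beta>0$ through $u,u_1,\dots,u_{k0},\dots$ and through $\zeta$ and its derivatives. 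Singularity of co-order $k$ for \emph{all} admissible $\zeta$ forces, after substitution, that $L$ restricted to $\mathcal Q_{(r)}$ depends on $u_{(r)}$ only through the $x_1$-derivatives of order $\le k$; the requirement that this hold identically in the free function $\zeta$ is what pins down the functional form. The natural coordinates on $J^r$ adapted to the module are precisely $x$ together with the quantities $\omega_\alpha=D_1^{\alpha_1}(\xi D_1+D_2)^{\alpha_2}u$ (these are independent and, together with the jet coordinates $(\xi D_1+D_2)^j$ applied suitably, form a coordinate system on $J^r$ because the change of variables is triangular and invertible). Expressing $L$ in these coordinates and imposing that its restriction to $\{\text{higher }(\xi D_1+D_2)\text{-derivatives are free}\}$ be independent of the $\omega_\alpha$ with $\alpha_1>k$ yields $L=\check L(x,\Omega_{r,k})$, and the fact that the co-order is exactly $k$ (not smaller) gives $\check L_{\omega_\alpha}\ne0$ for some $\omega_\alpha$ with $\alpha_1=k$.

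The main obstacle I expect is the bookkeeping in the ``only if'' direction: making rigorous the claim that the $\omega_\alpha$ together with the ``transversal'' higher-order jet variables form a genuine coordinate system on (an open subset of) $J^r$, and that independence of $L|_{\mathcal Q_{(r)}}$ from the variables not in $\Omega_{r,k}$ \emph{for every choice of $\zeta$} is equivalent to $L$ itself being a function of $x$ and $\Omega_{r,k}$ alone. Concretely, one must check that the map $(x, u_{(r)})\mapsto (x,\Omega_{r,k},\text{rest})$ is a local diffeomorphism — this is a triangularity/Jacobian computation, straightforward but notation-heavy — and that the ``splitting with respect to $\zeta$'' argument does not collapse (i.e.\ that different $\zeta$'s probe independent directions in the jet space), which is where the condition $\rank(\xi^{i1},\xi^{i2},\eta^i)=2$ defining a genuine module, rather than a single vector field, is essential.
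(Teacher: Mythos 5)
Your overall strategy is the paper's: normalize the module by Proposition~\ref{PropositionOnBasesOf2DModulesOf2DVectorFields}, pass to the adapted jet coordinates $\{x_i,\omega_\alpha\}$ (the triangular-Jacobian check you postpone is exactly the one the paper performs), and in the ``if'' direction your observation that on $\mathcal Q^\zeta_{(r)}$ each $\omega_\alpha$ restricts to $D_1^{\alpha_1}(Q^\zeta)^{\alpha_2}u$, a differential function of order at most $\alpha_1\leqslant k$, is precisely the paper's converse argument (your ``cannot all cancel generically'' remark for attaining co-order exactly $k$ is as terse as the paper's, so I let it pass). The genuine gap is in the ``only if'' direction: the step you defer as ``the main obstacle'' -- that singularity of co-order at most $k$ for \emph{every} $\zeta$ forces $L_{\omega_\alpha}=0$ for all $\alpha_1>k$ -- is not bookkeeping; it is the entire content of that implication, and your proposal neither carries it out nor identifies the mechanism that makes the splitting legitimate.

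The paper's resolution is a pointwise argument you would need to reproduce. Fix an arbitrary $z^0=(x^0,u^0_{(r)})\in J^r$ and consider only those $\zeta$ with $z^0\in\mathcal Q^\zeta_{(r)}$: this membership condition determines the derivatives of $\zeta$ with respect to $x_1,x_2$ alone at $(x^0,u^0)$ in terms of $u^0_{(r)}$ and the remaining derivatives, while \emph{all derivatives of $\zeta$ involving $\p_u$ stay completely unconstrained}. Then one processes the conditions $\hat L_{u_{(\kappa,0)}}(z^0)=0$ descending from $\kappa=r$: rewritten in the coordinates $\omega_\alpha$ and using the vanishings already obtained, the condition at level $\kappa$ is the single linear relation $\sum_{\nu}L_{\omega_{(\kappa,\nu)}}(z^0)\bigl(\p_u(Q^\zeta)^{\nu}u\bigr)\big|_{(x^0,u^0)}=0$, whose coefficients $\p_u(Q^\zeta)^{\nu}u$ depend triangularly on the unconstrained $u$-involving derivatives of $\zeta$ at $(x^0,u^0)$ and hence can be varied independently; splitting gives $L_{\omega_{(\kappa,\nu)}}(z^0)=0$ for all $\nu$, and arbitrariness of $z^0$ yields $L=\check L(x,\Omega_{r,k})$, the condition $\check L_{\omega_\alpha}\ne0$ for some $\alpha$ with $\alpha_1=k$ coming from the co-order being exactly $k$. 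Note also that your closing diagnosis misplaces the leverage slightly: the rank-two condition on the module is only what licenses the normal form of Proposition~\ref{PropositionOnBasesOf2DModulesOf2DVectorFields}; what actually powers the splitting is that the normalized module contains $\p_u$, so $\zeta$ is an arbitrary function of $u$ as well as of $x$, and it is exactly its $u$-derivatives at the chosen point that remain free after imposing $z^0\in\mathcal Q^\zeta_{(r)}$.
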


\begin{proof}
Suppose that a differential function~$L$ possesses a $k$th co-order singular two-dimen\-sional module of vector fields 
$\{Q^\theta=\theta^iQ^i\}$. 
By a point transformation and a change of basis, we represent the basis elements in the reduced form 
$Q^1=\xi\p_1+\p_2$ and $Q^2=\p_u$, where $\xi\in\{0,u\}$, 
and choose the subset $\{Q^\zeta=\xi\p_1+\p_2+\zeta\p_u\}$ in $\{Q^\theta\}$, where
$\zeta$ runs through the set of smooth functions of $(x,u)$. 
The initial differential function also will be changed by these transformations but throughout we will 
use the old notations for all new values.

We fix an arbitrary point $\smash{z^0=(x^0,u_{(r)}^0)\in J^r}$ 
and consider the vector fields from $\{Q^\zeta\}$ for which $\smash{z^0\in\mathcal Q^\zeta_{(r)}}$. 
This condition implies that the values of the derivatives of~$\zeta$ with respect to only~$x_1$ and $x_2$ 
in the point $(x^0,u^0)$ are expressed via $\smash{u_{(r)}^0}$ and values of derivatives of~$\zeta$ in $(x^0,u^0)$, 
containing differentiation with respect to~$u$. 
The latter values are not constrained.

We introduce the new coordinates $\{x_i,\omega_\alpha=D_1^{\alpha_1}(\xi D_1+D_2)^{\alpha_2}u,|\alpha|\leqslant r\}$ 
in $J^r$ instead of the standard ones $\{x_i,u_\alpha,|\alpha|\leqslant r\}$. 
This is a valid change of coordinates since the Jacobian matrix $(\p\omega_\alpha/\p u_{\alpha'})$ is nondegenerate. 
Indeed, it is a triangular matrix with all diagonal entries equal to $1$ if the following order of multi-indices is implemented:  
$\alpha<\beta:\Leftrightarrow|\alpha|<|\beta|\vee(|\alpha|=|\beta|\wedge\alpha_2<\beta_2)$. 
Note that $\omega_\alpha=D_1^{\alpha_1}(\xi D_1+D_2)^{\alpha_2}u=D_1^{\alpha_1}(Q^\zeta)^{\alpha_2}u$ on $\smash{\mathcal Q^\zeta_{(r)}}$.

Consider the differential function $\hat L$ obtained from~$L$ by the above procedure of excluding, 
on the manifold $\smash{\mathcal Q^\zeta_{(r)}}$, the derivatives of~$u$ involving differentiations with respect to~$x_2$ 
(see~\eqref{EqSingularRedOpsExpressionsForU_2}). 
Since $Q^\zeta$ is a $k$th co-order singular vector field for~$L$, 
the function $\hat L$ does not depend on the derivatives $u_{(\kappa,0)}$, $\kappa=k+1,\dots,r$. 
We use this condition step-by-step, starting from the greatest value of~$\kappa$ 
and re-writing the derivatives in the new coordinates of~$J^r$ and in terms of~$L$. 

Thus, in the new coordinates the equation $\hat L_{u_{(r,0)}}(z^0)=0$ has the form $L_{\omega_{(r,0)}}(z^0)=0$. 
This completes the first step.
Then in the second step the equation $\hat L_{u_{(r-1,0)}}(z^0)=0$ implies that 
\[
L_{\omega_{(r-1,0)}}(z^0)+L_{\omega_{(r-1,1)}}(z^0)\zeta_u(x^0,u^0)=0.
\]
We split with respect to the value $\zeta_u(x^0,u^0)$ since it is unconstrained.
As a result, we obtain the equations $L_{\omega_{(r-1,0)}}(z^0)=0$ and $L_{\omega_{(r-1,1)}}(z^0)=0$.

Iterating this procedure, before the $\mu$th step, $\mu\in\{1,\dots,r-k\}$, we derive the equations 
$L_{\omega_{(r-\mu',\nu)}}(z^0)=0$, $\mu'=0,\dots,\mu-2$, $\nu=0,\dots,\mu'$.
Then the equation $\hat L_{u_{(r-\mu+1,0)}}(z^0)=0$ implies that 
\[
\sum_{\nu=0}^{\mu-1}
L_{\omega_{(r-\mu+1,\nu)}}(z^0)\bigl(\p_u(Q^\zeta)^\nu u\bigr)\big|_{(x,u)=(x^0,u^0)}=0.
\]
The values $\p_u^{\nu+1}\zeta(x^0,u^0)$, $\nu=0,\dots,\mu-1$, are unconstrained.
Splitting with respect to them, which is equivalent to splitting with respect to 
$\smash{\bigl(\p_u(Q^\zeta)^\nu u\bigr)\big|_{(x,u)=(x^0,u^0)}}$, $\nu=0,\dots,\mu-1$,
gives the equations $L_{\omega_{(r-\mu+1,\nu)}}(z^0)=0$, $\nu=0,\dots,\mu-1$.

Finally, after the $(r-k)$th step we derive the system 
$L_{\omega_{(r-\mu',\nu)}}(z^0)=0$, $\mu'=0,\dots,r-k+1$, $\nu=0,\dots,\mu'$, 
which implies condition~\eqref{Eq2DWithSingularVectorFieldsOfCoOrderK}.

Conversely, let an $r$th order differential function~$L$ be of the form~\eqref{Eq2DWithSingularVectorFieldsOfCoOrderK} 
(after a point transformation). 
For an arbitrary smooth function~$\zeta=\zeta(x,u)$ we consider 
the vector field $Q^\zeta=\xi\p_1+\p_2+\zeta\p_u$ and 
the differential function $\tilde L=\check L(x,\tilde\Omega_{r,k})$
where \[\tilde\Omega_{r,k}=\bigl(\omega_\alpha=D_1^{\alpha_1}(Q^\zeta)^{\alpha_2}u,\alpha_1\leqslant k,\alpha_1+\alpha_2\leqslant r\bigr).\]
Then $\ord \tilde L=k$ and \[L|_{\mathcal Q^\zeta_{(r)}}=\tilde L|_{\mathcal Q^\zeta_{(r)}},\] 
i.e., $\{Q^\zeta=Q^1+\zeta Q^2\}$, where $Q^1=\xi\p_1+\p_2$, $Q^2=\p_u$
and $\zeta$ runs through the set of smooth functions of $(x,u)$, 
is a $k$th co-order singular set for the differential function~$L$ in the new variables. 
We complete the set by the vector fields equivalent to its elements or $\p_u$ and return to the old variables. 
As a result, for the differential function~$L$ we construct 
a $k$th co-order singular two-dimensional module of vector fields $\{Q^\theta=\theta^iQ^i\}$.
\end{proof}

\begin{corollary}\label{CorollaryOn2DCommutingSingularVectorFieldsOfCo-orderK}
A differential function with one dependent and two independent variables admits 
a $k$th co-order singular two-dimensional module generated by commuting vector fields 
if and only if it can be reduced by a point transformation of the variables to a differential function
in which all differentiations with respect to one of the independent variables are of order $\le k$. 
\end{corollary}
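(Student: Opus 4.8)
The plan is to derive Corollary~\ref{CorollaryOn2DCommutingSingularVectorFieldsOfCo-orderK} as a specialization of Theorem~\ref{TheoremOn2DSingularVectorFieldsOfCoOrderK} combined with Proposition~\ref{PropositionOnBasesOf2DModulesOf2DVectorFields}. First I would recall that, by Proposition~\ref{PropositionOnBasesOf2DModulesOf2DVectorFields}, a two-dimensional module closed under the Lie bracket can be reduced by a point transformation to the module generated by the commuting shift operators $Q^1=\p_2$ and $Q^2=\p_u$; equivalently, in the reduced form of Theorem~\ref{TheoremOn2DSingularVectorFieldsOfCoOrderK} this corresponds precisely to the choice $\xi=0$. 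So the commuting case and the case $\xi=0$ in~\eqref{Eq2DWithSingularVectorFieldsOfCoOrderK} describe the same class of modules up to point transformations.

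Next I would substitute $\xi=0$ into the representation~\eqref{Eq2DWithSingularVectorFieldsOfCoOrderK}. With $\xi=0$ we have $\xi D_1+D_2=D_2$, so the coordinates $\omega_\alpha=D_1^{\alpha_1}(\xi D_1+D_2)^{\alpha_2}u$ become simply the ordinary partial derivatives $\omega_\alpha=D_1^{\alpha_1}D_2^{\alpha_2}u=u_{\alpha_1\alpha_2}$, i.e. no change of jet coordinates is needed. Then Theorem~\ref{TheoremOn2DSingularVectorFieldsOfCoOrderK} says that $L$ admits a $k$th co-order singular commuting module if and only if, up to a point transformation, $L=\check L(x,\Omega_{r,k})$ where $\Omega_{r,k}$ consists of the derivatives $u_{\alpha_1\alpha_2}$ with $\alpha_1\le k$ and $\alpha_1+\alpha_2\le r$, with nontrivial dependence on some $u_{k,\alpha_2}$. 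This is exactly the statement that all differentiations with respect to $x_1$ occurring in $L$ are of order at most $k$: the condition $\alpha_1\le k$ limits the order of $\p_1$-differentiation, while $\alpha_2$ (the order of $\p_2$-differentiation) is only bounded by $r-\alpha_1$, hence may be as large as $r$. I would note that the ``only if'' direction of Theorem~\ref{TheoremOn2DSingularVectorFieldsOfCoOrderK} gives one implication and the ``if'' direction (with the explicit construction of the completed module from $\{Q^\zeta=\p_2+\zeta\p_u\}$, which is the module generated by the commuting fields $\p_2$ and $\p_u$) gives the converse.

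The only point requiring a little care is the translation between ``$L$ depends on $u_{\alpha_1\alpha_2}$ only for $\alpha_1\le k$'' and the precise list $\Omega_{r,k}$ appearing in the theorem. The constraint $\alpha_1+\alpha_2\le r$ is automatic because $\ord L=r$, so it imposes nothing beyond what the order of $L$ already forces; and the genuine-dependence condition $\check L_{\omega_\alpha}\ne0$ for some $\alpha$ with $\alpha_1=k$ is exactly what pins down the co-order to be $k$ rather than something smaller. I do not anticipate a serious obstacle here — the corollary is essentially a reading-off of the $\xi=0$ case — but I would make sure to phrase it so that the ``$k$'' in the corollary is the minimal such bound, matching the definition of singularity co-order, and to invoke Proposition~\ref{PropositionOnBasesOf2DModulesOf2DVectorFields} explicitly so that ``commuting module'' and ``$\xi=0$'' are identified rigorously rather than just suggestively.
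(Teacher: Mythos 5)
Your proposal is correct and matches the paper's intended derivation: the corollary is stated there without separate proof precisely because it is the $\xi=0$ specialization of Theorem~\ref{TheoremOn2DSingularVectorFieldsOfCoOrderK}, with Proposition~\ref{PropositionOnBasesOf2DModulesOf2DVectorFields} identifying commuting (i.e., bracket-closed) modules with that case, exactly as you argue. Your added remarks on the dependence condition pinning down the co-order $k$ are also consistent with the paper.
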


\begin{corollary}\label{CorollaryOn2DUltraSingularVectorFields}
Any differential function with one dependent and two independent variables (not identically vanishing) admits 
no ultra-singular two-dimensional module of singular vector fields. 
\end{corollary}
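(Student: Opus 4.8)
The plan is to argue by contradiction: assuming that a non-vanishing differential function~$L$ admits an ultra-singular two-dimensional module of vector fields, I will deduce that $L\equiv0$. So let $L$ have order~$r$ and $L\not\equiv0$, and suppose a two-dimensional module $\{Q^\theta=\theta^iQ^i\}$ is such that $L|_{\mathcal Q^\theta_{(r)}}\equiv0$ for every~$\theta$ with $(\theta^i\xi^{i1},\theta^i\xi^{i2})\ne(0,0)$ (this is what it means for the module to be of singularity co-order~$-1$, i.e.\ ultra-singular). A point transformation sends $L$ to a non-vanishing differential function of the same order and changes the module into one with the same singularity co-orders of its elements, so by Proposition~\ref{PropositionOnBasesOf2DModulesOf2DVectorFields} and the reductions preceding it I may assume the module contains, up to equivalence, the reduced subset $\{Q^\zeta=\xi\p_1+\p_2+\zeta\p_u\}$, where $\xi\in\{0,u\}$ is fixed and $\zeta$ runs through all smooth functions of~$(x,u)$. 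Since ultra-singularity is preserved under equivalence of vector fields, $L|_{\mathcal Q^\zeta_{(r)}}\equiv0$ for each such~$\zeta$.

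The crux is the claim that \emph{every} point $z^0=(x^0,u_{(r)}^0)\in J^r$ lies in $\mathcal Q^\zeta_{(r)}$ for a suitable~$\zeta$. Granting this, I pick $z^0$ with $L(z^0)\ne0$ (possible since $L\not\equiv0$) and the corresponding~$\zeta$; then $z^0\in\mathcal Q^\zeta_{(r)}$ forces $L(z^0)=0$, a contradiction, which proves the corollary. To establish the claim, I recall that $\mathcal Q^\zeta_{(r)}$ is cut out in~$J^r$ by the equations $D_1^{a}D_2^{b}Q^\zeta[u]=0$, $a+b<r$, with $Q^\zeta[u]=\zeta(x,u)-\xi(x,u)u_1-u_2$; these involve derivatives of~$u$ of order at most $a+b+1\le r$, hence only coordinates of~$z^0$. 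I construct~$\zeta$ via its Taylor expansion at~$(x^0,u^0)$: every coefficient involving at least one differentiation in~$u$ is put equal to zero, while the pure coefficients $\p_1^{a}\p_2^{b}\zeta(x^0,u^0)$ with $a+b<r$ are fixed recursively in order of increasing~$a+b$. At the step belonging to a pair~$(a,b)$, the equation $D_1^{a}D_2^{b}Q^\zeta[u]|_{z^0}=0$ contains $\p_1^{a}\p_2^{b}\zeta(x^0,u^0)$ with coefficient~$1$ --- it is the unique purely $x$-differentiated contribution of $D_1^{a}D_2^{b}\zeta$, since $\p_u$ cannot be removed once created and $D_2$ does not raise the $x_1$-order --- whereas all the remaining terms are built from coordinates of~$z^0$, from the fixed function~$\xi$ and its derivatives, and from Taylor coefficients of~$\zeta$ that are either of lower total order (already chosen) or involve~$\p_u$ (set to zero). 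Hence the equation determines $\p_1^{a}\p_2^{b}\zeta(x^0,u^0)$ uniquely and imposes no restriction on~$z^0$. Putting the remaining Taylor coefficients of~$\zeta$ equal to zero and taking any smooth function (for instance a polynomial) with the resulting jet at~$(x^0,u^0)$ yields a~$\zeta$ with $z^0\in\mathcal Q^\zeta_{(r)}$.

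The point needing most care is exactly this recursive construction: one has to verify that the equations defining $\mathcal Q^\zeta_{(r)}$ decouple along the chosen ordering of multi-indices, so that the Taylor coefficient of~$\zeta$ solved for at each step is genuinely new and no hidden constraint is forced back onto the prescribed jet~$z^0$. I would also remark that the corollary can be read off from Theorem~\ref{TheoremOn2DSingularVectorFieldsOfCoOrderK}: an ultra-singular module would be a ``$(-1)$th co-order singular module'', but the representation~\eqref{Eq2DWithSingularVectorFieldsOfCoOrderK} admits no instance with~$k=-1$, there being no~$\omega_\alpha$ with $\alpha_1=-1$, so the characterization of Theorem~\ref{TheoremOn2DSingularVectorFieldsOfCoOrderK} can never hold for such a module.
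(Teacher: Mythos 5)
Your argument is correct. The paper offers no separate proof of this corollary -- it is meant to be read off from Theorem~\ref{TheoremOn2DSingularVectorFieldsOfCoOrderK} and its proof -- and what you do is essentially to extract and make self-contained the one fact that derivation rests on: that every point $z^0\in J^r$ lies on $\mathcal Q^\zeta_{(r)}$ for a suitable $\zeta$ from the reduced set, the $u$-differentiated jet of $\zeta$ at $(x^0,u^0)$ being free. The paper's proof of the theorem simply asserts this ("the pure $x$-derivatives of $\zeta$ are expressed via $u^0_{(r)}$\dots, the latter values are not constrained"), whereas you verify it by the explicit observation that in $D_1^aD_2^bQ^\zeta[u]$ the only derivative of $\zeta$ free of $\p_u$ is $\p_1^a\p_2^b\zeta$, entering with coefficient~$1$, so the defining equations of $\mathcal Q^\zeta_{(r)}$ at $z^0$ decouple and determine the pure $x$-jet of $\zeta$ with no constraint on $z^0$. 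This buys you a direct, elementary contradiction ($L$ would vanish at every jet point, hence $L\equiv0$) without running the full splitting machinery of the theorem, and it documents a step the paper leaves implicit; your closing remark correctly identifies the paper's intended shortcut via the representation~\eqref{Eq2DWithSingularVectorFieldsOfCoOrderK}. One small presentational point: since everything is local, it is cleaner to first fix a jet point $z^0$ with $L(z^0)\ne0$ and then normalize the module by a point transformation near its projection $(x^0,u^0)$, rather than transforming first and choosing the point afterwards; otherwise the locally transformed function could in principle live on a neighbourhood where $L$ happens to vanish. This reordering is trivial and does not affect the validity of your argument.
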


\begin{note}\label{NoteOnSingularVectorFieldsOfCo-ordersLessThanModuleCo-order}
It is obvious that a singular module may contain vector fields whose singularity co-orders 
are less than the singularity co-order of the whole module. 
Suppose that $\{Q^\zeta=\xi\p_1+\p_2+\zeta\p_u\}$ is a singular set of vector fields for a differential function~$L$, 
and its singularity co-order equals~$k$.
Then the values of~$\zeta$ for which $\sco_LQ^\zeta<k$ are solutions of the equation 
\[
\sum_{\nu=0}^{r-k}\check L_{\omega_{(k,\nu)}}(x,\tilde\Omega_{r,k})\bigl(\p_u(Q^\zeta)^\nu u\bigr)=0, 
\]
where $\tilde\Omega_{r,k}=\bigl(D_1^{\alpha_1}(Q^\zeta)^{\alpha_2}u,\alpha_1\leqslant k,\alpha_1+\alpha_2\leqslant r\bigr)$ 
and $\check L$ is defined in Theorem~\ref{TheoremOn2DSingularVectorFieldsOfCoOrderK}. 
In other words, the regular values of~$\zeta$ associated with the vector fields of the maximal singularity co-order~$k$ in $\{Q^\zeta\}$
satisfy the inequality 
\[
\sum_{\nu=0}^{r-k}\check L_{\omega_{(k,\nu)}}(x,\tilde\Omega_{r,k})\bigl(\p_u(Q^\zeta)^\nu u\bigr)\ne0.
\]
\end{note}

\section{Singular vector fields of differential equations}\label{SectionOnSingularVectorFieldsOfDiffEquationsIn2DCase}

We will say that a vector field~$Q$ is \emph{(strongly) singular for a differential equation}~$\mathcal L$ 
if it is singular for the differential function~$L[u]$ which is the left hand side of the canonical representation $L[u]=0$ 
of the equation~$\mathcal L$. 
Usually we will omit the attribute ``strongly''.

Since left hand sides of differential equations are defined up to multipliers which are nonvanishing differential functions,
the conditions from Definition \ref{DefinitionOfSingularVectorFild} can be weakened 
when considering differential equations.

\begin{definition}\label{DefinitionOfWeaklySingularVectorFild}
A vector field~$Q$ is called \emph{weakly singular} for the differential equation~$\mathcal L$: $L[u]=0$ 
if there exist a differential function $\tilde L=\tilde L[u]$ of an order less than~$r$ 
and a nonvanishing differential function $\lambda=\lambda[u]$ of an order not greater than~$r$ 
such that $L|_{\mathcal Q_{(r)}}=\lambda\,\tilde L|_{\mathcal Q_{(r)}}$. 
Otherwise $Q$ is called a \emph{weakly regular} vector field for the differential equation~$\mathcal L$. 
If the minimal order of differential functions 
whose restrictions on $\mathcal Q_{(r)}$ coincide, up to nonvanishing functional multipliers, with $L|_{\mathcal Q_{(r)}}$ is equal to $k$ ($k<r$)
then the vector field~$Q$ is said to be \emph{weakly singular of co-order $k$} for the differential equation~$\mathcal L$. 
\end{definition}

The notions of ultra-singularity in the weak and the strong sense coincide. 
Analogous to the case of strong regularity, weakly regular vector fields for the differential equation~$\mathcal L$ 
are defined to have weak singularity co-order $r=\ord L$. 
The weak singularity co-order of a vector field~$Q$ for an equation~$\mathcal L$ will be denoted by $\wsco_{\mathcal L}Q$. 

Note that strong singularity implies weak singularity and hence weak regularity implies strong regularity.
It is obvious that the weak singularity co-order is never greater and may be less than the strong singularity co-order. 
In particular, strongly regular vector fields may be singular in the weak sense. 
For example, the equation $u_{ttt}=e^{u_{xx}}(u_x+u)$ possesses the singular vector field $\p_t$ 
whose strong and weak singularity co-order equal~2 and~1, respectively. 
The same vector field $\p_t$ is strongly regular and is of weak singularity co-order~1 for the equation $u_t=e^{u_{xx}}(u_x+u)$.

If $Q$ is a weakly singular vector field for~$\mathcal L$ then any vector field equivalent to~$Q$ is weakly singular for~$\mathcal L$  
with the same co-order of weak singularity. 

Weakly singular vector fields are related to characteristic directions
(cf.~\cite{Olver1993} concerning characteristic directions and characteristic hypersurfaces):
Given a vector field~$Q=\xi^i(x,u)\p_i+\eta(x,u)\p_u$ weakly singular for a differential equation~$\mathcal L$,
in each point of the manifold~$\mathcal L$ the vector $(\xi^1,\xi^2)$ is orthogonal to a characteristic direction 
of the equation~$\mathcal L$ in this point. 

Let $\hat L$ be a differential function associated with~$L$ on the manifold~$\mathcal Q_{(r)}$, namely, 
obtained from~$L$ via excluding those derivatives of~$u$ which contain differentiations with respect to~$x_2$ in view of equations defining~$\mathcal Q_{(r)}$.
Suppose additionally that $\hat L$ is of maximal rank in the derivative~$u$ of the highest order~$k$ appearing in this differential function, i.e.,
$\smash{\hat L_{u_{(k,0)}}\ne0}$ on the solution manifold of the equation~$\hat L=0$.
Then the weak singularity co-order of~$Q$ for the equation~$\mathcal L$: $L=0$ equals the order~$k$ of $\hat L$ 
and, therefore, the strong singularity co-order of~$Q$.
Hence in this case
testing that a vector field is weakly singular for a partial differential equation with two independent variables 
can be implemented via an entirely algorithmic procedure.

\begin{theorem}\label{TheoremOn2DWeakSingularVectorFieldsOfCo-orderK}
An $r$th order differential equation~$\mathcal L$: $L[u]=0$ of maximal rank with one dependent and two independent variables possesses
a $k$th co-order weakly singular two-dimensional module of vector fields if and only if 
$L$ can be represented, up to point transformations, in the form 
\begin{equation}\label{Eq2DWithWeakSingularVectorFieldsOfCoOrderK}
L=\Lambda[u]\check L(x,\Omega_{r,k}),
\end{equation}
where $\Lambda$ is a nonvanishing differential function of order not greater than~$r$, 
$\Omega_{r,k}=\bigl(\omega_\alpha=D_1^{\alpha_1}(\xi D_1+D_2)^{\alpha_2}u,\alpha_1\leqslant k,\alpha_1+\alpha_2\leqslant r\bigr)$, 
$\xi\in\{0,u\}$, 
and $\check L_{\omega_\alpha}\ne0$ for some $\omega_\alpha$ with $\alpha_1=k$.
\end{theorem}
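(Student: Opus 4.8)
The plan is to mirror the proof of Theorem~\ref{TheoremOn2DSingularVectorFieldsOfCoOrderK}, incorporating the multiplier~$\lambda$ wherever the strong argument splits off partial derivatives of~$L$. First I would handle the converse (easy) direction: if $L=\Lambda[u]\check L(x,\Omega_{r,k})$ up to a point transformation, then for any smooth $\zeta=\zeta(x,u)$ and $Q^\zeta=\xi\p_1+\p_2+\zeta\p_u$ we set $\tilde L=\check L(x,\tilde\Omega_{r,k})$ with $\tilde\Omega_{r,k}=\bigl(\omega_\alpha=D_1^{\alpha_1}(Q^\zeta)^{\alpha_2}u,\alpha_1\leqslant k,\alpha_1+\alpha_2\leqslant r\bigr)$ and $\lambda=\Lambda$. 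As in Theorem~\ref{TheoremOn2DSingularVectorFieldsOfCoOrderK}, on $\mathcal Q^\zeta_{(r)}$ one has $\omega_\alpha=D_1^{\alpha_1}(Q^\zeta)^{\alpha_2}u$, so $L|_{\mathcal Q^\zeta_{(r)}}=\lambda\,\tilde L|_{\mathcal Q^\zeta_{(r)}}$ with $\ord\tilde L=k$; completing $\{Q^\zeta\}$ by equivalent fields and $\p_u$, and returning to the old variables, yields a $k$th co-order weakly singular two-dimensional module. One still has to check that the co-order is exactly $k$, not less: this is where the maximal-rank hypothesis enters, guaranteeing $\check L_{\omega_\alpha}\ne0$ for some $\alpha$ with $\alpha_1=k$ cannot be removed by a further multiplier.

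The substantive direction is the forward one. Suppose $\mathcal L$ admits a $k$th co-order weakly singular two-dimensional module $\{Q^\theta=\theta^iQ^i\}$. By Proposition~\ref{PropositionOnBasesOf2DModulesOf2DVectorFields} and a change of basis I reduce the basis to $Q^1=\xi\p_1+\p_2$, $Q^2=\p_u$ with $\xi\in\{0,u\}$, pick the reduced subset $\{Q^\zeta=\xi\p_1+\p_2+\zeta\p_u\}$, and introduce the coordinates $\{x_i,\omega_\alpha=D_1^{\alpha_1}(\xi D_1+D_2)^{\alpha_2}u,|\alpha|\leqslant r\}$ on $J^r$, which is a legitimate triangular change of variables exactly as in the earlier proof. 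Fixing a generic point $z^0=(x^0,u_{(r)}^0)$, weak singularity of $Q^\zeta$ says: $L|_{\mathcal Q^\zeta_{(r)}}=\lambda\,\tilde L|_{\mathcal Q^\zeta_{(r)}}$ for some nonvanishing $\lambda$ of order $\le r$ and some $\tilde L$ of order $<r$; equivalently, the associated function $\hat L$ factors as $\hat L=\hat\lambda\,\hat{\tilde L}$ with $\hat{\tilde L}$ independent of $u_{(\kappa,0)}$ for $\kappa>k$. The key computation is to differentiate this factorization with respect to the top-order jet variables $u_{(\kappa,0)}$, $\kappa=r,r-1,\dots,k+1$, and exploit the maximal-rank assumption so that $\hat\lambda\ne0$ on the relevant manifold; dividing by $\hat\lambda$ converts the vanishing-derivative conditions on $\hat{\tilde L}$ into precisely the same system of equations $L_{\omega_{(r-\mu',\nu)}}(z^0)=0$, $\mu'=0,\dots,r-k-1$, $\nu=0,\dots,\mu'$, that appears in Theorem~\ref{TheoremOn2DSingularVectorFieldsOfCoOrderK}, after re-expressing derivatives in the $\omega$-coordinates and splitting with respect to the unconstrained values $\p_u^{\nu+1}\zeta(x^0,u^0)$. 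Running the same $\mu$-step induction, the upshot is that $L$, written in the $\omega$-coordinates, depends on $\omega_\alpha$ with $\alpha_1\le k$ and on the multiplier direction only, i.e.\ $L=\Lambda\,\check L(x,\Omega_{r,k})$ with $\check L_{\omega_\alpha}\ne0$ for some $\alpha_1=k$; transforming back gives~\eqref{Eq2DWithWeakSingularVectorFieldsOfCoOrderK}.

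The main obstacle is bookkeeping the multiplier $\lambda$ through the splitting procedure. In the strong case one splits $L|_{\mathcal Q_{(r)}}$ directly; here one must first argue that $\hat\lambda$ is nonvanishing and of order $\le r$ (hence, after re-expression, a function of $x$ and the $\omega$'s of all orders), then show that differentiating $\hat L=\hat\lambda\hat{\tilde L}$ with respect to a top jet variable $u_{(\kappa,0)}$ on which $\hat{\tilde L}$ does not depend yields $\hat\lambda_{u_{(\kappa,0)}}\hat{\tilde L}+\hat\lambda\,(\partial\hat{\tilde L}/\partial u_{(\kappa,0)})$ — and here $\partial\hat{\tilde L}/\partial u_{(\kappa,0)}$ need not vanish after the coordinate change because $\omega$'s of order $>k$ still involve $u_{(\kappa,0)}$. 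The clean way around this is to phrase everything intrinsically: $Q^\zeta$ is weakly singular of co-order $k$ iff $L|_{\mathcal Q^\zeta_{(r)}}$, as a function on $\mathcal Q^\zeta_{(r)}$ parametrised by $\{x_i,\omega_\alpha\}$, has the property that its zero set coincides with that of a function of $\{x_i,\omega_\alpha:\alpha_1\le k\}$ only, i.e.\ $L$ and this lower-order function differ by a nonvanishing factor; maximal rank ensures the zero set determines the function up to such a factor. With that reformulation the forward implication is reduced to the strong-case argument applied to the reduced (factor-free) representative, and the multiplier simply rides along as the global prefactor $\Lambda$. I would also remark, as in Theorem~\ref{TheoremOn2DSingularVectorFieldsOfCoOrderK}, that the vector fields with $\theta^1=0$ are discarded since they do not yield ansatzes, so no generality is lost in working with $\{Q^\zeta\}$.
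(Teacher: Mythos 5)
Your converse direction matches the paper's, but in the forward direction there is a genuine gap. Differentiating the factorization $\hat L=\hat\lambda\,\hat{\tilde L}$ with respect to $u_{(\kappa,0)}$, $\kappa>k$, gives $\hat L_{u_{(\kappa,0)}}=\hat\lambda_{u_{(\kappa,0)}}\hat{\tilde L}$ (the term you worry about, $\hat\lambda\,\p\hat{\tilde L}/\p u_{(\kappa,0)}$, vanishes identically because $\hat{\tilde L}$ is of order at most~$k$ in the $x_1$-derivatives; the coordinate change plays no role here), and this expression is \emph{not} zero at a generic point of $J^r$: it vanishes only where $\hat{\tilde L}=0$, i.e.\ on the zero set of~$L$. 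So ``dividing by $\hat\lambda$'' does not reproduce the system of Theorem~\ref{TheoremOn2DSingularVectorFieldsOfCoOrderK} at an arbitrary point $z^0$, as you claim; the splitting argument only yields $L_{\omega_{(r-\mu',\nu)}}(z^0)=0$ for $z^0\in\mathcal L$. The paper makes exactly this restriction (it fixes $z^0\in\mathcal L\subset J^r$ from the outset) and then needs two further steps that your sketch omits: applying the Hadamard lemma to each on-manifold condition $L_{\omega_\alpha}=0$ on~$\mathcal L$ (this is where maximal rank is used) so that $L_{\omega_\alpha}$ is a multiple of~$L$ identically, and then simultaneously integrating the resulting first-order system in the $\omega$-coordinates to produce the single global multiplier~$\Lambda$ and the function $\check L(x,\Omega_{r,k})$ in~\eqref{Eq2DWithWeakSingularVectorFieldsOfCoOrderK}.

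Your proposed ``intrinsic'' repair does not close this hole. Saying that the zero set of $L|_{\mathcal Q^\zeta_{(r)}}$ coincides with that of a function of the low-order variables, so that the two differ by a nonvanishing factor, merely restates the definition of weak singularity separately for each~$\zeta$ and on each manifold $\mathcal Q^\zeta_{(r)}$; and reducing the problem to ``the strong-case argument applied to the reduced (factor-free) representative'' presupposes that a single factor-free representative $\check L$ with the required structure exists, which is precisely what the theorem asserts. The real content is assembling the separate per-$\zeta$ factorizations into one representation $L=\Lambda\check L$ valid on all of~$J^r$, and this is accomplished by the restriction to $z^0\in\mathcal L$, the Hadamard lemma, and the integration step (cf.\ the proof of Theorem~1 of Zhdanov, Tsyfra and Popovych cited in the paper), none of which appear in your argument.
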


\begin{proof}
We will freely use the notations and definitions from the proof of Theorem~\ref{TheoremOn2DSingularVectorFieldsOfCoOrderK}.

Suppose at first that a differential equation~$\mathcal L$: $L[u]=0$ is of maximal rank and admits 
a $k$th co-order weakly singular two-dimensional module of vector fields. 
Up to point transformations and changes of module basis, we may consider only 
a set $\{Q^\zeta=\xi\p_1+\p_2+\zeta\p_u\}$ of singular vector fields in reduced form. 

We fix an arbitrary point $\smash{z^0=(x^0,u_{(r)}^0)\in\mathcal L\subset J^r}$ 
and choose the vector fields from $\{Q^\zeta\}$ for which $\smash{z^0\in\mathcal Q^\zeta_{(r)}}$. 
This condition implies that the values of derivatives of~$\zeta$ with respect to only~$x_1$ and $x_2$ 
in the point $(x^0,u^0)$ are expressed via $\smash{u_{(r)}^0}$ and values of derivatives of~$\zeta$ in $(x^0,u^0)$, 
containing differentiation with respect to~$u$. 
The latter values are not constrained.

The differential function $\hat L$ is obtained from~$L$ by excluding, 
on the manifold $\smash{\mathcal Q^\zeta_{(r)}}$, derivatives of~$u$ involving differentiations with respect to~$x_2$ 
(see~\eqref{EqSingularRedOpsExpressionsForU_2}). 
$k$th co-order weak singularity of $Q^\zeta$ for~$L$ leads to  
$\hat L_{u_{(\kappa,0)}}(z_0)=0$, $\kappa=k+1,\dots,r$. 
We use this condition step-by-step as in the proof of Theorem~\ref{TheoremOn2DSingularVectorFieldsOfCoOrderK}, 
starting from the greatest value of~$\kappa$ and re-writing the derivatives in the new coordinates 
$\{x_i,\omega_\alpha=D_1^{\alpha_1}(\xi D_1+D_2)^{\alpha_2}u,|\alpha|\leqslant r\}$ of~$J^r$ and in terms of~$L$. 
Therefore, 
\[L_{\omega_{(r-\mu',\nu)}}(z^0)=0,\quad \mu'=0,\dots,r-k+1,\quad \nu=0,\dots,\mu',\]
which is satisfied for any $z^0\in\mathcal L$. 
Applying the Hadamard lemma to each of these equations and then simultaneously integrating them,  
we obtain~\eqref{Eq2DWithWeakSingularVectorFieldsOfCoOrderK} (cf.\ the proof of Theorem~1 in \cite{Zhdanov&Tsyfra&Popovych1999}).

Conversely, let an $r$th order differential function~$L$ be of the form~\eqref{Eq2DWithWeakSingularVectorFieldsOfCoOrderK} 
(after a point transformation). 
For an arbitrary smooth function~$\zeta=\zeta(x,u)$ we consider 
the vector field $Q^\zeta=\xi\p_1+\p_2+\zeta\p_u$ and 
the differential function $\tilde L=\check L(x,\tilde\Omega_{r,k})$, 
where \[\tilde\Omega_{r,k}=\bigl(\omega_\alpha=D_1^{\alpha_1}(Q^\zeta)^{\alpha_2}u,\alpha_1\leqslant k,\alpha_1+\alpha_2\leqslant r\bigr).\]
Then $\ord \tilde L=k$ and $L|_{\mathcal Q^\zeta_{(r)}}=\Lambda\tilde L|_{\mathcal Q^\zeta_{(r)}}$, 
i.e., $\{Q^\zeta=Q^1+\zeta Q^2\}$, where $Q^1=\xi\p_1+\p_2$, $Q^2=\p_u$
and $\zeta$ runs through the set of smooth functions of $(x,u)$, 
is a $k$th co-order weakly singular set for the differential equation~$\mathcal L$ in the new variables. 
We complete the set by the vector fields equivalent to its elements or $\p_u$ and return to the old variables, 
thereby constructing 
a $k$th co-order weakly singular two-dimensional module of vector fields $\{Q^\theta=\theta^iQ^i\}$
for the differential equation~$\mathcal L$.
\end{proof}

\begin{corollary}\label{CorollaryUselessnessOfWeaklySingularFamiliesOfVectorFields}
A differential equation~$\mathcal L$: $L[u]=0$ of maximal rank with one dependent and two independent variables possesses
a $k$th co-order weakly singular two-dimensional module of vector fields if and only if 
this module is $k$th co-order strongly singular for~$\mathcal L$ 
(possibly in a representation differing from $L[u]=0$ in multiplication by a nonvanishing differential function~of~$u$).
\end{corollary}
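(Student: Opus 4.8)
The plan is to derive the corollary essentially for free from the two structural theorems that precede it, namely Theorem~\ref{TheoremOn2DSingularVectorFieldsOfCoOrderK} and Theorem~\ref{TheoremOn2DWeakSingularVectorFieldsOfCo-orderK}. The key observation is that both theorems characterize the existence of a $k$th co-order singular (resp.\ weakly singular) two-dimensional module through a normal form of $L$ relative to the same distinguished coordinates $\omega_\alpha=D_1^{\alpha_1}(\xi D_1+D_2)^{\alpha_2}u$ with $\xi\in\{0,u\}$: in the strong case $L=\check L(x,\Omega_{r,k})$, and in the weak case $L=\Lambda[u]\,\check L(x,\Omega_{r,k})$ with $\Lambda$ a nonvanishing differential function. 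So the two normal forms differ precisely by a nonvanishing differential factor.

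First I would prove the implication from weak to strong. Assume $\mathcal L$ has maximal rank and admits a $k$th co-order weakly singular module. By Theorem~\ref{TheoremOn2DWeakSingularVectorFieldsOfCo-orderK}, after a point transformation $L=\Lambda[u]\check L(x,\Omega_{r,k})$ with $\Lambda\ne0$. Then the equation $\mathcal L$ has the alternative canonical representation $\check L(x,\Omega_{r,k})=0$, obtained by dividing by the nonvanishing differential function $\Lambda$; this representation has exactly the form~\eqref{Eq2DWithSingularVectorFieldsOfCoOrderK} required by Theorem~\ref{TheoremOn2DSingularVectorFieldsOfCoOrderK}, so the same module $\{Q^\theta\}$ is $k$th co-order \emph{strongly} singular for $\check L$. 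The converse is immediate: strong singularity for some representation implies weak singularity for the canonical one, since if $\check L(x,\Omega_{r,k})=0$ is a representation of $\mathcal L$ then $L=\Lambda\check L$ with $\Lambda\ne0$, and this is exactly~\eqref{Eq2DWithWeakSingularVectorFieldsOfCoOrderK}; alternatively this direction already follows from the general remark, made just before Definition~\ref{DefinitionOfWeaklySingularVectorFild}, that strong singularity implies weak singularity, together with the fact (stated in the excerpt) that the co-orders agree under the maximal-rank hypothesis. One should also check that the co-orders match, not merely the existence: this is where the hypothesis $\check L_{\omega_\alpha}\ne0$ for some $\omega_\alpha$ with $\alpha_1=k$, common to both theorems, is used — it pins the singularity co-order of the module to exactly $k$ in both settings, so passing between $L$ and $\check L$ cannot change it.

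The step I expect to require the most care is the bookkeeping around ``representations differing by a nonvanishing differential function of~$u$'': one must be sure that dividing $L=\Lambda\check L$ by $\Lambda$ genuinely produces an admissible canonical representation of the \emph{same} equation $\mathcal L$ on the relevant open subset of $J^r$ (it does, since $\Lambda$ is nonvanishing there and $\ord\Lambda\le r$, so $\mathcal L=\{L=0\}=\{\check L=0\}$ as manifolds), and that the maximal-rank property needed to invoke Theorem~\ref{TheoremOn2DWeakSingularVectorFieldsOfCo-orderK} is not lost in the process. Beyond that the argument is a direct transcription: both theorems are ``if and only if'' characterizations in terms of the same normal form up to a nonvanishing factor, so the corollary amounts to the tautology that these normal forms coincide after absorbing that factor. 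I would phrase the proof in two short paragraphs, one per implication, citing the two theorems and the preceding remark on the coincidence of co-orders under maximal rank, and conclude that weak and strong $k$th co-order singularity of a two-dimensional module are equivalent for a maximal-rank equation modulo the choice of nonvanishing differential multiplier in the canonical form.
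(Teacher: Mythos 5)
Your proposal is correct and takes essentially the same route as the paper, which states this corollary without separate proof precisely because it is read off from Theorems~\ref{TheoremOn2DSingularVectorFieldsOfCoOrderK} and~\ref{TheoremOn2DWeakSingularVectorFieldsOfCo-orderK}: the two normal forms differ exactly by the nonvanishing factor $\Lambda$, so dividing the representation by $\Lambda$ converts weak into strong singularity of the same co-order $k$, and the converse is immediate. (Your side remark about co-orders agreeing ``under maximal rank'' is slightly misattributed --- that observation needs the associated function $\hat L$ to be of maximal rank in its highest derivative --- but your argument does not depend on it, since the converse part of Theorem~\ref{TheoremOn2DWeakSingularVectorFieldsOfCo-orderK} already gives what is needed.)
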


\begin{definition}\label{DefinitionOfSingularRedOps}
A vector field~$Q$ is called a \emph{singular reduction operator} of a differential equation~$\mathcal L$
if $Q$ is both a reduction operator of~$\mathcal L$ and a weakly singular vector field of~$\mathcal L$.
\end{definition}

\section{Example: evolution equations}\label{SectionOnExampleOfEvolEqs}

In this section we investigate singular reduction operators of $(1+1)$-dimensional evolution equations of the form 
\begin{equation}\label{EqGenEvolEq}
u_t=H(t,x,u_{(r,x)}),
\end{equation}
where $r>1$, $u_0:=u$, $u_k=\p^ku/\p x^k$, $u_{(r,x)}=(u_0,u_1,\dots,u_r)$ and  $H_{u_r}\ne0$.
(We revert to the notation $t$ and $x$ for $x_1$ and $x_2$, respectively, and change the notations of the corresponding derivatives.)
Evolution equations are quite specific from the point of view of singular vector fields and singular reduction operators. 

\begin{proposition}\label{PropositionOnSingularVectorFieldsOfSimpleNWEs}
A vector field~$Q=\tau(t,x,u)\p_t+\xi(t,x,u)\p_x+\eta(t,x,u)\p_u$ is singular for the differential function $L=u_t-H(t,x,u_{(r,x)})$ 
of order $r>1$ if and only if $\tau=0$. 
The co-order of singularity of any singular vector field for any such differential function equals~1.
\end{proposition}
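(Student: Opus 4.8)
The plan is to split into the two implications and use the associated differential function $\hat L$ from Section~\ref{SectionOnSingularVectorFieldsOfDiffFunctionsIn2DCase}. First I would observe that the case $\tau\ne0$ is trivial: if $\tau\ne0$ we may, up to equivalence, take $\tau=1$, and then on $\mathcal Q_{(r)}$ the variable $u_t$ is replaced by $\eta-\xi u_x$, which drops the order by zero (it merely trades $u_t$ for expressions in $x$-derivatives of order $\le r$), while the term $H(t,x,u_{(r,x)})$ is untouched and still contains $u_r$. More carefully, writing the associated differential function $\hat L$ by eliminating all derivatives involving differentiation with respect to $t$ (using $x$ as the ``$x_1$'' direction of the general theory and $t$ as ``$x_2$''), one sees that $u_r$ enters $\hat L$ only through $-H$, hence $\hat L_{u_r}=-H_{u_r}\ne0$ by hypothesis, so $\ord\hat L=r$ and $Q$ is regular. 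This shows singularity forces $\tau=0$.

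For the converse, suppose $\tau=0$; after scaling we may take $\xi=1$, so $Q=\p_x+\eta\p_u$ with the roles $x_1=x$, $x_2=t$ (note the ``$\xi^2\ne0$'' normalization of the general procedure now corresponds to the coefficient of $\p_x$ being nonzero, so we eliminate $t$-derivatives). On $\mathcal Q_{(r)}$ every derivative of $u$ is expressed through $x$-derivatives only; in particular $u_t=\eta-u_x\,$ wait --- here $u_t$ on $\mathcal Q_{(r)}$ is replaced using the characteristic equation: $Q[u]=\eta-\xi u_x - \tau u_t = \eta-u_x$, so the invariant surface condition is $u_x=\eta$, and its differential consequences express all $x$-derivatives $u_k$, $k\ge1$, as functions of $t,x,u$. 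Meanwhile $u_t$ is \emph{not} constrained by $\mathcal Q_{(r)}$ (there is no $t$-direction derivative of the characteristic forcing it), so $u_t$ survives as the single highest genuine jet variable. Substituting the expressions $u_k=u_k(t,x,u)$ into $L=u_t-H(t,x,u_{(r,x)})$ yields $\hat L=u_t-\hat H(t,x,u)$, a differential function of order $1$. Hence $Q$ is singular, and since $\hat L$ has order exactly $1$ (it genuinely contains $u_t$), the singularity co-order is $1$.

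It remains to record the co-order claim in the first case too: whenever $\tau=0$ the above shows $\sco_L Q=1$, and $1<r$ since $r>1$, so $Q$ is indeed singular of co-order $1$; conversely when $\tau\ne0$ the co-order is $r$ by the maximal-rank argument, i.e. $Q$ is regular. Thus every singular vector field has $\tau=0$ and co-order $1$, as asserted. I would phrase the co-order statement by invoking Theorem~\ref{TheoremOn2DSingularVectorFieldsOfCoOrderK} or simply the explicit computation of $\hat L$ above, whichever is shorter.

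\textbf{Main obstacle.} The only subtle point is bookkeeping: one must correctly identify which independent variable plays the role of ``$x_2$'' (the one whose derivatives get eliminated) in the general algorithm of Section~\ref{SectionOnSingularVectorFieldsOfDiffFunctionsIn2DCase}, and verify that after elimination the single derivative $u_t$ really does remain as an unconstrained order-one jet coordinate while all higher $x$-derivatives collapse to functions of $(t,x,u)$. Once the direction of elimination is fixed consistently, the argument is a direct substitution; the nondegeneracy input is exactly the hypothesis $H_{u_r}\ne0$, which guarantees $u_r$ cannot disappear from $\hat L$ when $\tau\ne0$.
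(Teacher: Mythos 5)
Your proposal is correct and follows essentially the same route as the paper's proof: for $\tau\ne0$ eliminate $u_t$ via the characteristic equation and note that $H_{u_r}\ne0$ keeps the associated function of order $r$, while for $\tau=0$ the invariant surface condition $u_x=\zeta$ and its differential consequences collapse all $x$-derivatives to functions of $(t,x,u)$, leaving $\hat L=u_t-\tilde H(t,x,u)$ of order exactly~$1$. The only blemish is the momentary mix-up of which variable plays the role of $x_2$ in the elimination, which you correct in the course of the argument, so the final computation agrees with the paper's.
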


\begin{proof}
Suppose that $\tau\ne0$. 
Excluding the derivative $u_t$ from~$L$ according to the equation $u_t=\eta/\tau-\xi u_x/\tau$ 
results in a differential function $\tilde L=\eta/\tau-\xi u_x/\tau-H(t,x,u_{(r,x)})$. 
Since $\ord \tilde L=r=\ord L$, the vector field $Q$ is not singular in this case. 
Therefore, for the vector field $Q$ to be singular, the coefficient~$\tau$ has to vanish. 

If $\tau=0$ and therefore $\xi\ne0$, all the derivatives $u_k$, $k=1,\dots,r$, can be expressed, on the manifold $\mathcal Q_{(r)}$ via $t$, $x$ and~$u$:
$u_k=(\p_x+\zeta\p_u)^{k-1}\zeta$, $k=1,\dots,r$, where $\zeta=\eta/\xi$. 
Using these expressions for excluding the derivatives $u_k$, $k=1,\dots,r$ from~$L$, we obtain the differential function 
\[
\tilde L=u_t-\tilde H(t,x,u), \quad \tilde H:=H(t,x,u,\zeta,\zeta_x+\zeta\zeta_u,\dots,(\p_x+\zeta\p_u)^{r-1}\zeta),
\]
whose order equals~1. 
Hence the vector field $Q$ is singular for the differential function $L$, and its singularity co-order equals~1.
\end{proof}

\begin{corollary}\label{CorollaryOSingularVectorFieldsOfEvolEqs}
For any $(1+1)$-dimensional evolution equation, 
the corresponding differential function possesses exactly one set of singular vector fields in the reduced form, 
namely, $S=\{\p_x+\zeta(x,u)\p_u\}$. 
The singularity co-order of~$S$ equals~1.
\end{corollary}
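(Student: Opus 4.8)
The plan is to derive the corollary directly from Proposition~\ref{PropositionOnSingularVectorFieldsOfSimpleNWEs} together with the reduction-to-normal-form procedure described before Theorem~\ref{TheoremOn2DSingularVectorFieldsOfCoOrderK}. First I would recall that, for the evolution equation~\eqref{EqGenEvolEq} written as $L=u_t-H(t,x,u_{(r,x)})=0$ with the variables $x_1=t$, $x_2=x$, Proposition~\ref{PropositionOnSingularVectorFieldsOfSimpleNWEs} tells us that a vector field $Q=\tau\p_t+\xi\p_x+\eta\p_u$ is singular for $L$ precisely when $\tau=0$; and when $\tau=0$ we necessarily have $\xi\ne0$, so $Q$ is equivalent (dividing by $\xi$) to $\p_x+\zeta\p_u$ with $\zeta=\eta/\xi$. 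Thus every singular vector field for $L$ is equivalent to a unique element of the set $S=\{\p_x+\zeta(x,u)\p_u\}$ (here the coefficients may in principle also depend on $t$, but since $t$ plays the role of a passive independent variable I would note it can be carried along as a parameter; the essential point is the form of the field).

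Next I would address why $S$ is precisely the reduced form in the sense of Section~\ref{SectionOnSingularVectorFieldsOfDiffFunctionsIn2DCase}: a reduced singular set has the shape $\{Q^\zeta=\hat\xi\p_1+\p_2+\zeta\p_u\}$ with $\hat\xi$ a fixed function and $\zeta$ running through all smooth functions. Here the role of $\p_2$ is played by $\p_x$ (the variable whose coefficient does not vanish), the role of $\p_1$ by $\p_t$, and the fixed coefficient $\hat\xi$ of $\p_t$ equals $\tau/\xi$, which is $0$ since singularity forces $\tau=0$. Hence the reduced form is exactly $S=\{\p_x+\zeta\p_u\}$, with no $\p_t$-term, and this $S$ is the unique such set (up to the permutation of independent variables, which here is fixed by the requirement $H_{u_r}\ne0$ singling out $x$ as the spatial variable). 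I would also remark that the vector fields of $S$ with $\theta^1=0$, i.e. those equivalent to $\p_u$, are discarded as in the discussion preceding Proposition~\ref{PropositionOnBasesOf2DModulesOf2DVectorFields}.

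Finally, the co-order claim: Proposition~\ref{PropositionOnSingularVectorFieldsOfSimpleNWEs} already states that the singularity co-order of every singular vector field for $L$ equals~$1$, because after eliminating $u_1,\dots,u_r$ on $\mathcal Q_{(r)}$ one obtains $\tilde L=u_t-\tilde H(t,x,u)$ of order~$1$; and the co-order cannot drop to $0$ or $-1$ since $u_t$ genuinely survives. Therefore, by Definition~\ref{DefinitionOfSingularModuleOfVectorFilds}, the singularity co-order of the module containing $S$ — which is the maximum of the co-orders of its elements — equals~$1$ as well. Putting these pieces together yields the corollary.

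I do not expect a serious obstacle here, since the corollary is essentially a repackaging of Proposition~\ref{PropositionOnSingularVectorFieldsOfSimpleNWEs}; the only point requiring mild care is the bookkeeping of the dependence on $t$ versus $x$ and the identification of the ``fixed coefficient'' $\hat\xi=0$ with the reduced-form normalization of Section~\ref{SectionOnSingularVectorFieldsOfDiffFunctionsIn2DCase}, so that uniqueness of the reduced singular set is genuinely justified rather than merely asserted.
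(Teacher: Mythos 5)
Your argument is correct and follows the paper's own route: the corollary is obtained directly from Proposition~\ref{PropositionOnSingularVectorFieldsOfSimpleNWEs} (singularity forces $\tau=0$, hence equivalence to $\p_x+\zeta\p_u$ and uniqueness of the reduced singular set, with co-order $1$ for every element and hence for the set by Definition~\ref{DefinitionOfSingularModuleOfVectorFilds}). Your added care about the $t$-dependence of $\zeta$ and about why the alternative normalization $\p_t+\hat\xi\p_x+\zeta\p_u$ cannot yield a singular set is consistent with the paper's intent.
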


It is obvious that under the condition $H_{u_r}\ne0$ 
a vector field is singular for the differential function $u_t-H(t,x,u_{(r,x)})$ 
if and only if it is weakly singular for the differential equation $u_t=H(t,x,u_{(r,x)})$.
Hence we do not distinguish between strong and weak singularity 
(cf. Corollary~\ref{CorollaryUselessnessOfWeaklySingularFamiliesOfVectorFields}). 

The vector fields~$\p_2$ and~$\p_u$ generating the singular module associated with~$S$ commute
and the differential function~$L$ contains only first order differentiation with respect to~$t$ 
(namely, in the form of the derivative~$u_t$). 
This perfectly agrees with Corollary~\ref{CorollaryOn2DCommutingSingularVectorFieldsOfCo-orderK}.

We fix an arbitrary equation~$\mathcal L$ of the form~\eqref{EqGenEvolEq} and denote by $\mathcal Q_0(\mathcal L)$ 
the set of reduction operators of~$\mathcal L$, belonging to~$S$. 
For the equation~$\mathcal L$ and $Q\in\mathcal Q_0(\mathcal L)$, the conditional invariance criterion implies only the single $r$th order equation
\[
\zeta_t+\zeta_u\tilde H=\tilde H_x+\zeta\tilde H_u, \quad \tilde H:=H(t,x,u,\zeta,\zeta_x+\zeta\zeta_u,\dots,(\p_x+\zeta\p_u)^{r-1}\zeta),
\]
with respect to the single unknown function~$\zeta$ with three independent variables~$t$, $x$ and~$u$, which we will denote by ${\rm DE}_0(\mathcal L)$.
In other words, the system of determining equations in this case consists of the single equation ${\rm DE}_0(\mathcal L)$ 
and, therefore, is not overdetermined.
${\rm DE}_0(\mathcal L)$ is the compatibility condition 
of the equations $u_x=\zeta$ and $\mathcal L$. 

\begin{theorem}\label{TheoremUnitedOnSetsOfSolutionsAndReductionOperatorsWithTau0OfSystemsOfEqlsp}
Up to the equivalences of operators and solution families, for any equation of form~\eqref{EqGenEvolEq} 
there exists a one-to-one correspondence between one-parametric families of its solutions 
and reduction operators with zero coefficients of $\p_t$.
Namely, each operator of this kind corresponds to
the family of solutions which are invariant with respect to this operator. 
The problems of the construction of all one-parametric solution families of equation~\eqref{EqGenEvolEq} 
and the exhaustive description of its reduction operators with zero coefficients of $\p_t$
are completely equivalent.
\end{theorem}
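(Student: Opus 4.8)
The plan is to exhibit the correspondence in both directions and check that the composition is the identity up to the stated equivalences. In one direction, suppose $Q = \p_x + \zeta(t,x,u)\p_u \in \mathcal Q_0(\mathcal L)$ (we may take this reduced form by Lemma~\ref{LemmaOnEquivFamiliesOfOperators} and the normalization discussed before Corollary~\ref{CorollaryOSingularVectorFieldsOfEvolEqs}). By the reduction property, the ansatz associated with $Q$ reduces $\mathcal L$ to a first-order ODE $\check{\mathcal L}$; since by Proposition~\ref{PropositionOnSingularVectorFieldsOfSimpleNWEs} the singularity co-order of $Q$ is exactly $1$, the reduced equation has essential order~$1$ and its general solution carries exactly one essential parameter. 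Pulling these $Q$-invariant solutions back through the ansatz yields a one-parametric family of solutions of~$\mathcal L$. The key point to verify here is that every solution in the family is genuinely $Q$-invariant and that distinct values of the parameter give functionally independent solutions, so that the family has exactly one essential parameter; this follows from the construction of the ansatz and $\zeta_u$-freeness on $\mathcal Q_{(r)}$.

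In the converse direction, start from a one-parametric family $u = f(t,x,\kappa)$ of solutions of~\eqref{EqGenEvolEq} with $f$ depending essentially on $\kappa$. Locally invert $\kappa = \kappa(t,x,u)$ from $u = f(t,x,\kappa)$ (possible precisely because the dependence on $\kappa$ is essential), and set $\zeta := u_x|_{u=f} = f_x(t,x,\kappa(t,x,u))$. Then $Q := \p_x + \zeta\p_u$ has the whole family among its invariant solutions, so the system $\{u_x = \zeta,\ \mathcal L\}$ is compatible; since, as noted in the text, ${\rm DE}_0(\mathcal L)$ is exactly the compatibility condition of $u_x = \zeta$ with $\mathcal L$, the function $\zeta$ satisfies ${\rm DE}_0(\mathcal L)$, i.e.\ $Q \in \mathcal Q_0(\mathcal L)$. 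I expect the main technical obstacle to be precisely this step: one must show that differentiating the identity $f_t = H(t,x,f,f_x,\dots)$ (which holds identically in $t,x,\kappa$) and re-expressing everything in terms of $(t,x,u)$ via $\kappa = \kappa(t,x,u)$ reproduces ${\rm DE}_0(\mathcal L)$ — this is a chain-rule computation showing that the total $x$-derivative of $f$ along the family equals $\zeta_x + \zeta\zeta_u$, and inductively that $(\p_x + \zeta\p_u)^{k-1}\zeta$ equals $\p_x^k f$ evaluated along the family, so that the family being a solution is equivalent to $\zeta$ solving ${\rm DE}_0(\mathcal L)$.

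Finally, I would check that the two constructions are mutually inverse modulo the equivalences: starting from $Q$, forming its invariant family, and then rebuilding an operator returns an operator equivalent to $Q$ (both have the reduced form $\p_x + \zeta\p_u$ with the same $\zeta$, since $\zeta$ is recovered as $u_x$ on the common invariant manifold); starting from a family, building $Q$, and taking $Q$-invariant solutions returns the same family up to reparametrization of $\kappa$ (equivalence of solution families). The last sentence of the theorem is then immediate: a complete description of $\mathcal Q_0(\mathcal L)$ is, via this bijection, the same data as a complete list of one-parametric solution families, and conversely. The only subtlety worth a remark is the local nature of the inversion $\kappa = \kappa(t,x,u)$ and the tacit restriction to families in which $\kappa$ enters essentially; degenerate families (constant in $\kappa$ on open sets) are excluded exactly as the operators $\p_u$ were excluded before Corollary~\ref{CorollaryOSingularVectorFieldsOfEvolEqs}.
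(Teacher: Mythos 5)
Your proposal is correct and follows the same overall scheme as the paper (two directions plus a check that the constructions are mutually inverse up to the equivalences of operators and of solution families), but the converse direction is argued by a genuinely different route. The paper, after defining $\zeta=-\Phi_x/\Phi_u$ from $\varkappa=\Phi(t,x,u)$, does not verify ${\rm DE}_0(\mathcal L)$ by computation: it substitutes the family itself as an ansatz, $u=f(t,x,\varphi(\omega))$ with $\omega=t$, observes that $L$ restricted to this ansatz equals $f_\varkappa\varphi_\omega$ up to the nonvanishing factor $f_\varkappa$, so the reduced equation is simply $\varphi_\omega=0$, and then invokes the equivalence between reducibility and conditional invariance of \cite{Zhdanov&Tsyfra&Popovych1999} to conclude $Q\in\mathcal Q_0(\mathcal L)$; that $\zeta$ solves ${\rm DE}_0(\mathcal L)$ is then a consequence, not the thing proved directly. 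You instead verify the determining equation itself, using that ${\rm DE}_0(\mathcal L)$ is the compatibility condition of $u_x=\zeta$ with $\mathcal L$ and differentiating the identity $f_t=H(t,x,f,f_x,\dots)$ along the family. This works, and it has the merit of being self-contained at the level of the determining equation, but to make it airtight you should state explicitly that the graphs $u=f(t,x,\varkappa)$ foliate an open set of $(t,x,u)$-space (this is exactly where $f_\varkappa\ne0$ enters), so that the cross-derivative identity $\zeta_t+\zeta_u\tilde H=\tilde H_x+\zeta\tilde H_u$, which you obtain pointwise along each member of the family, in fact holds identically in $(t,x,u)$ rather than merely on individual solution surfaces; "the family is among the invariant solutions, hence the system is compatible" is too quick as written. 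The paper's route buys exactly this for free by passing through the ansatz and the cited equivalence theorem, at the cost of relying on that external result; your route replaces the citation by an explicit (and slightly longer) chain-rule argument.
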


\begin{proof}
Let $\mathcal L$ be an equation from class~\eqref{EqGenEvolEq} and $Q=\p_x+\zeta\p_u\in \mathcal Q_0(\mathcal L)$, i.e., 
the coefficient $\zeta=\zeta(t,x,u)$ satisfies the equation ${\rm DE}_0(\mathcal L)$. 
An ansatz constructed with $Q$ has the form $u=f(t,x,\varphi(\omega))$, 
where $f=f(t,x,\varphi)$ is a given function, $f_\varphi\ne0$, 
$\varphi=\varphi(\omega)$ is the new unknown function and $\omega=t$ is the invariant independent variable.
This ansatz reduces $\mathcal L$ to a first-order ordinary differential equation $\mathcal L'$ in~$\varphi$, 
solvable with respect to~$\varphi'$. 
The general solution of the reduced equation~$\mathcal L'$ can be represented in the form 
$\varphi=\varphi(\omega,\varkappa)$, where $\varphi_\varkappa\ne0$ and $\varkappa$ is an arbitrary constant. 
Substituting this solution into the ansatz results in  
the one-parametric family~$\mathcal F$ of solutions $u=\tilde f(t,x,\varkappa)$ of~$\mathcal L$ 
with $\tilde f=f(t,x,\varphi(t,\varkappa))$. 
Expressing the parameter~$\varkappa$ from the equality  $u=\tilde f(t,x,\varkappa)$, 
we obtain that $\varkappa=\Phi(t,x,u)$, where $\Phi_u\ne0$.  
Then $\zeta=u_x=-\Phi_x/\Phi_u$ for any $u\in\mathcal F$, i.e., for any admissible value of $(t,x,\varkappa)$. 
This implies that $\zeta=-\Phi_x/\Phi_u$ for any admissible value of $(t,x,u)$.

Conversely, 
suppose that $\mathcal F=\{u=f(t,x,\varkappa)\}$ is a one-parametric family of solutions of~$\mathcal L$. 
The derivative $f_\varkappa$ is nonzero since the parameter $\varkappa$ is essential. 
We express $\varkappa$ from the equality $u=f(t,x,\varkappa)$: $\varkappa=\Phi(t,x,u)$ for some function $\Phi=\Phi(t,x,u)$ 
with $\Phi_u\ne0$. 
Consider the operator $Q=\p_x+\zeta\p_u$, where the coefficient $\zeta=\zeta(t,x,u)$ is defined by $\zeta=-\Phi_x/\Phi_u$. 
$Q[u]=0$ for any $u\in\mathcal F$. 
The ansatz $u=f(t,x,\varphi(\omega))$, where $\omega=t$, associated with~$Q$, 
reduces~$\mathcal L$ to the equation $\varphi_\omega=0$.
Therefore \cite{Zhdanov&Tsyfra&Popovych1999}, $Q\in\mathcal Q_0(\mathcal L)$ and hence
the function $\zeta$ satisfies ${\rm DE}_0(\mathcal L)$. 
\end{proof}

\begin{corollary}\label{CorollaryOnLinearizationOfDetEqs0ForRedOpsOfLPEs}
The nonlinear $(1+2)$-dimensional evolution equation ${\rm DE}_0(\mathcal L)$ is reduced by the composition of the nonlocal substitution 
$\zeta=-\Phi_x/\Phi_u$, where $\Phi$ is a function of $(t,x,u)$, and the hodograph transformation 
\begin{gather*}
\mbox{the new independent variables:}\qquad\tilde t=t, \quad \tilde x=x, \quad \varkappa=\Phi, 
\\ 
\lefteqn{\mbox{the new dependent variable:}}\phantom{\mbox{the new independent variables:}\qquad }\tilde u=u 
\end{gather*}
to the initial equation $\mathcal L$ in the function $\tilde u=\tilde u(\tilde t,\tilde x,\varkappa)$ 
with $\varkappa$ playing the role of a parameter.
\end{corollary}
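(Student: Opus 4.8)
The plan is to take the one-to-one correspondence established in Theorem~\ref{TheoremUnitedOnSetsOfSolutionsAndReductionOperatorsWithTau0OfSystemsOfEqlsp} and make its two directions completely explicit as mutually inverse changes of variables on the level of the determining equation ${\rm DE}_0(\mathcal L)$. Starting from a solution~$\zeta=\zeta(t,x,u)$ of ${\rm DE}_0(\mathcal L)$, I would introduce the potential $\Phi=\Phi(t,x,u)$ via the nonlocal substitution $\zeta=-\Phi_x/\Phi_u$; this is consistent precisely because, as noted after Theorem~\ref{TheoremUnitedOnSetsOfSolutionsAndReductionOperatorsWithTau0OfSystemsOfEqlsp}, ${\rm DE}_0(\mathcal L)$ is the compatibility condition of $u_x=\zeta$ and~$\mathcal L$, so the overdetermined system $u_x=\zeta$, $\mathcal L$ has a one-parametric family of solutions $u=f(t,x,\varkappa)$ and $\varkappa=\Phi(t,x,u)$ with $\Phi_u\ne0$ serves as a first integral.

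Next I would carry out the hodograph transformation $\tilde t=t$, $\tilde x=x$, $\varkappa=\Phi$, $\tilde u=u$, which is admissible since $\Phi_u\ne0$. The key computational step is to express the old derivatives $u, u_x, u_{xx},\dots,u_r$ and $u_t$ entering $L=u_t-H(t,x,u_{(r,x)})$ in terms of derivatives of $\tilde u=\tilde u(\tilde t,\tilde x,\varkappa)$. Along the level sets $\varkappa=\const$ one has $u=\tilde u$, and the chain rule turns $\p_x$ acting at fixed $u$ into $\p_{\tilde x}$ acting at fixed $\varkappa$ (because on a level set of $\Phi$ the graph of~$u$ is exactly the graph of~$\tilde u(\tilde t,\tilde x,\varkappa)$), and similarly for~$\p_t$; iterating gives $u_k=\tilde u_{\tilde x\dots\tilde x}$ ($k$ times) and $u_t=\tilde u_{\tilde t}$, with $\varkappa$ a passive parameter. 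Substituting these into ${\rm DE}_0(\mathcal L)$ — equivalently, into the compatible pair $u_x=\zeta$, $u_t=H$ — collapses the first equation to an identity (it becomes the definition of the new independent variable) and turns the second into $\tilde u_{\tilde t}=H(\tilde t,\tilde x,\tilde u_{(r,\tilde x)})$, which is $\mathcal L$ written for $\tilde u=\tilde u(\tilde t,\tilde x,\varkappa)$.

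Conversely, given a solution $\tilde u=\tilde u(\tilde t,\tilde x,\varkappa)$ of~$\mathcal L$ depending on the parameter~$\varkappa$ with $\tilde u_\varkappa\ne0$, reversing the hodograph transformation (solving $\tilde u=\tilde u(\tilde t,\tilde x,\varkappa)$ for $\varkappa=\Phi(t,x,u)$, $\Phi_u\ne0$) and setting $\zeta=-\Phi_x/\Phi_u$ yields, by the computation above read backwards, a solution of ${\rm DE}_0(\mathcal L)$; this is exactly the construction in the converse part of Theorem~\ref{TheoremUnitedOnSetsOfSolutionsAndReductionOperatorsWithTau0OfSystemsOfEqlsp}. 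Hence the composite transformation is a well-defined local correspondence between solutions of ${\rm DE}_0(\mathcal L)$ and of~$\mathcal L$.

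I expect the main obstacle to be the bookkeeping in the derivative transformation rule: one must verify carefully that under the hodograph map restricted to level sets of~$\Phi$ the total $x$- and $t$-derivatives of~$u$ go precisely to the corresponding partial derivatives of~$\tilde u$ with $\varkappa$ inert, with no extra terms surviving — the cancellation of these extra terms is exactly where the substitution $\zeta=-\Phi_x/\Phi_u$ is used, and it is the content of the identity $u_x=\zeta$ becoming trivial. Once that rule is in place, the rest is a direct substitution, and the locality and invertibility follow from $\Phi_u\ne0$ (respectively $\tilde u_\varkappa\ne0$) together with the inverse function theorem.
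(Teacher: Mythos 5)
Your proof is correct and takes essentially the same route as the paper: the corollary is stated there without a separate proof precisely because it rests on the two directions of the proof of Theorem~\ref{TheoremUnitedOnSetsOfSolutionsAndReductionOperatorsWithTau0OfSystemsOfEqlsp}, where $\zeta=-\Phi_x/\Phi_u$ with $\Phi_u\ne0$ is extracted from the one-parametric family of $Q$-invariant solutions and conversely. The explicit compatibility (Frobenius) and hodograph computation you sketch is exactly the argument the paper writes out for the analogous Corollaries~\ref{CorollaryOnReductionOfDetEqsForSingularRedOpsOfSimpleWEs} and~\ref{CorollaryOnReductionOfDetEqsForFirstCoOrderSingularRedOpsForGen2DPDEs}.
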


\begin{note}\label{NoteOnEquiv1ParamFamiliesOfSolutions}
One-parametric families  $u=f(t,x,\varkappa)$ and $u=\tilde f(t,x,\tilde\varkappa)$ 
are defined to be equivalent if they consist of the same functions and differ only by parameterizations, i.e., 
if there exists a function $\zeta=\zeta(\varkappa)$ such that 
$\zeta_\varkappa\ne0$ and $\tilde f(t,x,\zeta(\varkappa))=f(t,x,\varkappa)$.  
Equivalent one-parametric families of solutions are associated with the same operator 
from $\mathcal Q_0(\mathcal L)$ and have to be identified. 
\end{note}

\begin{note}
The triviality of the above ansatz and the reduced equation results from
the above special representation for the solutions of the determining equation.  
Under this approach difficulties in the construction of ansatzes and the integration of the reduced equations are replaced 
by difficulties in obtaining the representation for the coefficients of the reduction operators.
\end{note}

The above consideration shows that for any evolution equation~$\mathcal L$ 
the conventional partition of the set~$\mathfrak Q(\mathcal L)$ of its reduction operators with the conditions $\tau\ne0$ and $\tau=0$ 
is natural since it coincides with the partition of~$\mathfrak Q(\mathcal L)$ into the singular and regular subsets. 
\emph{This is a specific property of evolution equations which does not hold for general partial differential equations 
in two independent variables.} 
After factorizing the subsets of~$\mathfrak Q(\mathcal L)$ with respect to the usual equivalence relation of reduction operators, 
we obtain two different cases of inequivalent reduction operators (the regular case $\tau=1$ and the singular case $\tau=0$ and $\xi=1$), 
which have to be studied separately. 

Singular reduction operators of~$\mathcal L$ are described in a unified `no-go' way. 
All singular reduction operators of~$\mathcal L$ have the same singularity co-order equal to~1 
and hence reduce~$\mathcal L$ to first order ordinary differential equations. 
The coincidence of the singularity co-orders guarantees the existence of a bijection between 
the set of singular reduction operators of~$\mathcal L$ and the set of one-parametric families of its solutions 
(up to the natural equivalence relations in these sets). 
As a result, in the case $\tau=0$ and $\xi=1$ the determining equation for a single coefficient of~$\p_u$ 
is reduced, with no additional assumptions and conditions, to the initial equation~$\mathcal L$ 
by a nonlocal transformation (cf. Corollary~\ref{CorollaryOnLinearizationOfDetEqs0ForRedOpsOfLPEs}).  

The regular case $\tau=1$ is more complicated than the singular one. 
It essentially depends on the structure of the equation including the order, the kind of nonlinearities, etc. 
Up to now there are no exhaustive results on regular reduction operators even for second-order evolution equations. 
Only certain subclasses of such equations were investigated. 
See, e.g., \cite{ArrigoHillBroadbridge1993,Clarkson&Mansfield1993,Popovych2008a,Popovych&Vaneeva&Ivanova2007}
for the complete classifications of regular reduction operators for some subclasses of second-order evolution equations 
parameterized by functions of single arguments. 
For example, even for the class of nonlinear diffusion equations of the general form $u_t=(f(u)u_x)_x$ 
(a classical example of solving a group classification problem for partial differential equations~\cite{Ovsiannikov1982}), 
the set of values of the parameter-function~$f$ which correspond to equations possessing non-Lie regular reduction operators
has not yet been found. 
Most evolution equations have no regular reduction operators. 
A simple example is \[u_t=u_{xx}+ue^{u_x}+xe^{2u_x}+te^{3u_x}+e^{4u_x}+e^{5u_x}.\]
Some evolution equations (the linear ones~\cite{Fushchych&Shtelen&Serov&Popovych1992,Popovych2008a}, Burgers' equation~\cite{Mansfield1999}, etc.) 
possess so many regular reduction operators that `no-go' statements like those for singular reduction operators are true for them,
but the nature of this `no-go' differs from the `no-go' of the singular case and is related to the property of linearity or linearizability 
of the corresponding evolution equations.

\section{Example: nonlinear wave equations}\label{SectionOnExampleOfSimpleNWEs}

The next example which we study in detail within the framework of singular reduction operators 
is given by the class of nonlinear wave equations (in the characteristic, or light-cone, variables) of the  general form 
\begin{equation}\label{EqSimpleNWEs}
u_{12}=F(u).
\end{equation}
Here $F$ is an arbitrary smooth function of~$u$. 
This class essentially differs from the class of evolution equations within the framework of singular vector fields. 
The main differences are that each differential function corresponding to an equation from class~\eqref{EqSimpleNWEs}
has two singular sets of vector fields 
and these sets contain vector fields of lower singularity co-orders than the singularity co-orders of the whole sets.
Thus, for any~$F$ the vector field $Q=\xi^i(x,u)\p_i+\eta(x,u)\p_u$ is singular for the corresponding differential function $L=u_{12}-F(u)$ 
if and only if $\xi^1\xi^2=0$. 
Moreover, it is obvious that there are no differences between strong and weak singularity of vector fields 
for equations from the class~\eqref{EqSimpleNWEs}.
Indeed, suppose that $\xi^2\ne0$. 
Excluding the derivatives $u_2$ and $u_{12}$ from~$L$ according to~\eqref{EqSingularRedOpsExpressionsForU_2}, 
we obtain a differential function $\tilde L$ with the coefficient $-\xi^1/\xi^2$ of $u_{11}$. 
We have $\ord \tilde L<2$ if and only if $\xi^1=0$. 

Therefore, \emph{for any~$F$ the differential function $L=u_{12}-F(u)$ possesses exactly two sets of singular vector fields in the reduced form}, 
$S=\{\p_2+\zeta(x,u)\p_u\}$ and $S^*=\{\p_1+\zeta^*(x,u)\p_u\}$. 
The vector fields equivalent to $\p_u$ are not suitable as reduction operators. 
Any singular vector field of~$L$ is equivalent to one of the above fields.
Moreover, each equation of the form~\eqref{EqSimpleNWEs} admits the discrete symmetry transformation permuting the variables~$x_1$ and~$x_2$. 
This transformation generates a one-to-one mapping between $S$ and~$S^*$ (cf.\ Corollary~\ref{CorollaryOnEquivReductionOperatorWrtSymGroup}). 
Hence it suffices, up to equivalence of vector fields (and permutation of~$x_1$ and~$x_2$), 
to investigate only singular reduction operators from the set~$S$.   

For an equation~$\mathcal L$ from class~\eqref{EqSimpleNWEs} and an operator $Q=\p_2+\zeta\p_u$ the conditional invariance criterion takes the form
\[
(\zeta_{12}+\zeta_{1u}u_2+\zeta_{2u}u_1+\zeta_{uu}u_1u_2+\zeta_uu_{12})|_{\mathcal L\cap\mathcal Q_{(2\textbf{})}}=\zeta F_u.
\]
The intersection $\mathcal L\cap\mathcal Q_{(2)}$ is singled out from~$J^2$ by the equations 
$u_2=\zeta$, $\zeta_1+\zeta_uu_1=F$ and $u_{12}=F$.
Our further considerations therefore depend on the values of $\zeta_u$ and $F_u$. 
We analyze all the possible cases. 

Let $\zeta_u=0$ and $F_u=0$. Then $Q$ is an ultra-singular vector field for the differential function~$L$. 
The third equation defining  $\mathcal L\cap\mathcal Q_{(2)}$ takes the form $\zeta_1=F$ and contains no derivatives of~$u$. 
It should be assumed as a condition with respect to~$\zeta$ 
and hence the conditional invariance criterion is identically satisfied in this case. 
An ansatz constructed with the operator~$Q$ is $u=\varphi(\omega)+\int\!\zeta\, dx_2$, where $\omega=x_1$. 
It reduces equation~\eqref{EqSimpleNWEs} to an identity. 
This is explained by the ultra-singularity of the reduction operator~$Q$. 

If $\zeta_u=0$ and $F_u\ne0$, the singularity co-order of $Q$ for the differential function~$L$ equals 0. 
The third equation defining  $\mathcal L\cap\mathcal Q_{(2)}$ again takes the form $\zeta_1=F$ but now can be solved 
with respect to~$u$: $u=\check F(\zeta_1)$, where $\check F$ is the inverse to~$F$. 
Then the conditional invariance criterion is equivalent to the equation $\zeta_{12}=\zeta F_u(\check F(\zeta_1))$ with respect to $\zeta$.
The ansatz constructed with the operator~$Q$ reduces equation~\eqref{EqSimpleNWEs} 
to the algebraic equation $F(\varphi+\int\!\zeta\, dx_2)=\zeta_1$ for the function $\varphi$ 
in agreement with the singularity co-order $0$ of $Q$. 
Indeed, inverting~$F$, we obtain the equality $\varphi=\check F(\zeta_1)-\int\!\zeta\, dx_2$ 
whose right-hand side does not depend on~$x_2$ in view of the equation on~$\zeta$. 
Conversely, let us fix a solution $u=f(x)$ of equation~\eqref{EqSimpleNWEs} and set $\zeta=f_2$. 
Then $\zeta_{12}=\zeta F_u(\check F(\zeta_1))$, 
i.e., in view of the conditional invariance criterion $Q=\p_2+\zeta\p_u$ is a reduction operator of equation~\eqref{EqSimpleNWEs}, 
and $\zeta_u=0$. The solution $u=f(x)$ is invariant with respect to~$Q$. 
The above results can be summed up as follows:

\begin{theorem}\label{TheoremOnSolutionsAndSingularRedOpsForSimpleWEs}
For any equation from class~\eqref{EqSimpleNWEs} with $F_u\ne0$ 
there exists a one-to-one correspondence between its solutions 
and reduction operators of the form $Q=\p_2+\zeta(x)\p_u$ (resp. $Q^*=\p_1+\zeta^*(x)\p_u$). 
Namely, each operator of this kind is of singularity co-order~$0$ and 
corresponds to the solution which is invariant with respect to this operator. 
The problems of solving an equation from class~\eqref{EqSimpleNWEs} with $F_u\ne0$ 
and the exhaustive description of its reduction operators of the above form are completely equivalent.
\end{theorem}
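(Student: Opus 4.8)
The plan is to establish the claimed bijection by exhibiting two mutually inverse constructions, exactly mirroring the evolution-equation case (Theorem~\ref{TheoremUnitedOnSetsOfSolutionsAndReductionOperatorsWithTau0OfSystemsOfEqlsp}) but now invoking the fact, already derived in the preceding discussion, that when $F_u\ne0$ a singular reduction operator $Q=\p_2+\zeta(x)\p_u$ necessarily has $\zeta_u=0$, that its singularity co-order is $0$, and that the conditional invariance criterion reduces to the single equation $\zeta_{12}=\zeta F_u(\check F(\zeta_1))$ on $\zeta=\zeta(x)$. First I would take such an operator~$Q$ and show it yields a solution of~\eqref{EqSimpleNWEs}: since $\zeta_1=F$ can be solved for $u$ as $u=\check F(\zeta_1)$ on $\mathcal L\cap\mathcal Q_{(2)}$, and the determining equation $\zeta_{12}=\zeta F_u(\check F(\zeta_1))$ guarantees $\p_2\bigl(\check F(\zeta_1)\bigr)=\zeta$, the function $f(x):=\check F(\zeta_1(x))$ satisfies $f_2=\zeta$ and hence $f_{12}=\zeta_1=F(f)$, so $u=f(x)$ solves the equation and is $Q$-invariant (it satisfies $Q[u]=f_2-\zeta=0$).

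Conversely, I would start from a solution $u=f(x)$ of~\eqref{EqSimpleNWEs}, set $\zeta:=f_2$, and check that $Q=\p_2+\zeta\p_u$ is a reduction operator of the required form. Here $\zeta$ depends only on $x$ (not on $u$), so $\zeta_u=0$ automatically; moreover $\zeta_1=f_{12}=F(f)$, and since $F_u\ne0$ the map $F$ is locally invertible, giving $f=\check F(\zeta_1)$ and therefore $\zeta_{12}=f_{122}=\p_2\bigl(F(f)\bigr)=F_u(f)f_2=\zeta F_u(\check F(\zeta_1))$, which is precisely the determining equation. By the reduction criterion of~\cite{Zhdanov&Tsyfra&Popovych1999} (or directly: the ansatz $u=\varphi(\omega)+\int\zeta\,dx_2$, $\omega=x_1$, associated with $Q$ reduces the equation to $F(\varphi+\int\zeta\,dx_2)=\zeta_1$, whose inversion $\varphi=\check F(\zeta_1)-\int\zeta\,dx_2$ has $x_2$-independent right-hand side by the $\zeta$-equation), $Q$ is a singular reduction operator of co-order~$0$, and $f$ is visibly $Q$-invariant.

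It then remains to verify that these two constructions are inverse to each other, so that the correspondence is genuinely one-to-one. Starting from $Q$ with coefficient $\zeta$, the associated solution is $f=\check F(\zeta_1)$, and applying the reverse construction gives back the coefficient $f_2=\p_2\check F(\zeta_1)=\check F'(\zeta_1)\zeta_{12}=\zeta$ (using $\check F'=1/F_u\circ\check F$ and the determining equation). Starting from a solution $f$, the reverse map gives $\zeta=f_2$, and the forward map returns $\check F(\zeta_1)=\check F(F(f))=f$. The statement about $Q^*=\p_1+\zeta^*(x)\p_u$ follows verbatim by the discrete symmetry permuting $x_1$ and $x_2$, noted earlier (cf.~Corollary~\ref{CorollaryOnEquivReductionOperatorWrtSymGroup}). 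The final sentence of the theorem — equivalence of the two classification problems — is then an immediate consequence of the established bijection.

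I do not expect a serious obstacle here: everything is a direct translation of the evolution-equation argument, and the real content (that $F_u\ne0$ forces $\zeta_u=0$, pins the co-order at $0$, and linearizes the intersection condition via $\check F$) has already been extracted in the case analysis preceding the theorem. The one point requiring a little care is the local-invertibility step: $\check F$ exists only where $F_u\ne0$, so the correspondence is local in the appropriate sense, consistent with the paper's standing local-setting convention — I would simply remark on this rather than belabour it.
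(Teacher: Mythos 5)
Your proposal is correct and follows essentially the same route as the paper, whose proof of this theorem is the case analysis immediately preceding it: $\zeta_u=0$ with $F_u\ne0$ gives singularity co-order~$0$, the conditional invariance criterion collapses to $\zeta_{12}=\zeta F_u(\check F(\zeta_1))$, the unique invariant solution is $u=\check F(\zeta_1)$, and conversely $\zeta=f_2$ for a solution $f$ satisfies that determining equation. Your direct verification that $\check F(\zeta_1)$ solves $u_{12}=F(u)$ and your explicit check that the two constructions are mutually inverse are only slight elaborations of the paper's ansatz-based phrasing, and the reduction to $Q^*$ via the permutation symmetry is the same as in the paper.
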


\begin{corollary}\label{CorollaryOnSolutionsForAdjoint0SingularRedOpsOfSimpleNWEs}
Any solution $u=f(x)$ of equation~\eqref{EqSimpleNWEs} with $F_u\ne0$ is invariant 
with respect to two reduction operators $Q=\p_2+\zeta(x)\p_u$ and $Q^*=\p_1+\zeta^*(x)\p_u$ of equation~\eqref{EqSimpleNWEs}, having singularity co-order~0. 
Here $\zeta=f_2$ and $\zeta^*=f_1$. 
The property of possessing the same invariant solution of equation~\eqref{EqSimpleNWEs} establishes 
a canonical bijection $Q\leftrightarrow Q^*$ between the sets of reduction operators of singularity co-order $0$. 
The adjoint values of $\zeta$ and $\zeta^*$ are connected by the formulas
\[
\zeta^*= \frac{\zeta_{11}}{F_u(\check F(\zeta_1))}, \quad 
\zeta= \frac{\zeta^*_{22}}{F_u(\check F(\zeta^*_2))}. 
\]
\end{corollary}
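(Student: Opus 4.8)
The plan is to start from Theorem~\ref{TheoremOnSolutionsAndSingularRedOpsForSimpleWEs}, which already sets up the canonical bijection between solutions $u=f(x)$ of~\eqref{EqSimpleNWEs} and co-order $0$ reduction operators. Concretely, a reduction operator $Q=\p_2+\zeta(x)\p_u$ with $\zeta_u=0$ corresponds to the (unique) solution $u=f(x)$ with $f_2=\zeta$, and the defining equation for $\zeta$ established in the text is $\zeta_{12}=\zeta F_u(\check F(\zeta_1))$, with the intersection $\mathcal L\cap\mathcal Q_{(2)}$ characterized by $u_2=\zeta$, $\zeta_1=F(u)$ (hence $u=\check F(\zeta_1)$), and $u_{12}=F$. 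By the permutation symmetry $x_1\leftrightarrow x_2$ of the class, the analogous statement holds for $Q^*=\p_1+\zeta^*(x)\p_u$ with $\zeta^*_u=0$, corresponding to the solution with $f_1=\zeta^*$ and satisfying $\zeta^*_{21}=\zeta^* F_u(\check F(\zeta^*_2))$.

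Next I would define the bijection $Q\leftrightarrow Q^*$ by requiring that they share the same invariant solution: given $Q$ with invariant solution $f$, set $\zeta^*:=f_1$; given $Q^*$ with invariant solution $f$, set $\zeta:=f_2$. That this is well defined and bijective is immediate from the one-to-one correspondences of Theorem~\ref{TheoremOnSolutionsAndSingularRedOpsForSimpleWEs} (both sets are in bijection with the same set of solutions, and the identifications are compatible). So the content that remains is the pair of explicit formulas.

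The key computation is to eliminate $f$ from the relations. Starting from $\zeta=f_2$, differentiate in $x_1$: $\zeta_1=f_{12}=F(f)$ since $f$ solves~\eqref{EqSimpleNWEs}; hence $f=\check F(\zeta_1)$. Differentiating $\zeta_1=F(f)$ once more in $x_1$ gives $\zeta_{11}=F_u(f)f_1=F_u(\check F(\zeta_1))\,\zeta^*$, which upon division by $F_u(\check F(\zeta_1))\ne0$ (legitimate since $F_u\ne0$) yields $\zeta^*=\zeta_{11}/F_u(\check F(\zeta_1))$. The second formula follows by the symmetric argument: from $\zeta^*=f_1$ one gets $\zeta^*_2=f_{12}=F(f)$, so $f=\check F(\zeta^*_2)$, and differentiating in $x_2$ gives $\zeta^*_{22}=F_u(f)f_2=F_u(\check F(\zeta^*_2))\,\zeta$, i.e.\ $\zeta=\zeta^*_{22}/F_u(\check F(\zeta^*_2))$.

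There is essentially no hard obstacle here: the only points requiring care are (i) invoking $F_u\ne0$ to guarantee invertibility of $F$ and nonvanishing of the denominators, and (ii) noting that everything is consistent, i.e.\ the $\zeta^*$ produced from $\zeta$ indeed has $\zeta^*_u=0$ and satisfies its own determining equation (which is automatic, since $\zeta^*=f_1$ for the common invariant solution $f$, so Theorem~\ref{TheoremOnSolutionsAndSingularRedOpsForSimpleWEs} applies directly). Thus the proof is a short sequence of differentiations of the invariant surface condition along the equation, and the main ``work'' is just bookkeeping of which variable one differentiates with respect to.
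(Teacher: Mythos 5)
Your proposal is correct and follows essentially the same route the paper intends: the corollary is a direct consequence of Theorem~\ref{TheoremOnSolutionsAndSingularRedOpsForSimpleWEs} (applied to both $Q$ and, via the permutation $x_1\leftrightarrow x_2$, to $Q^*$), with the explicit formulas obtained exactly as you do, by differentiating $\zeta_1=f_{12}=F(f)$, inverting $F$ using $F_u\ne0$, and computing $\zeta_{11}=F_u(\check F(\zeta_1))f_1=F_u(\check F(\zeta_1))\zeta^*$ (and symmetrically for $\zeta^*$). No gaps.
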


The regular values of~$\zeta$ for which the singularity co-order of $Q$ coincides with the singularity co-order of the whole family~$S$ 
(and equals 1) satisfy the condition $\zeta_u\ne0$.
The third equation defining  $\mathcal L\cap\mathcal Q_{(2)}$ then provides the following expression for $u_1$: 
\[u_1=\frac{F-\zeta_1}{\zeta_u}=:\zeta^*.\]
The conditional invariance criterion implies only the single equation
\begin{equation}\label{EqDEForSingularRedOpsOfNWEs}
\zeta_{12}+\zeta\zeta_{1u}+(\zeta_{2u}+\zeta\zeta_{uu})\frac{F-\zeta_1}{\zeta_u}+\zeta_uF=\zeta F_u
\end{equation}
with respect to the single function~$\zeta$, 
i.e., in this case the system of determining equations consists of the single equation~\eqref{EqDEForSingularRedOpsOfNWEs} 
and, therefore, is not overdetermined.

Equation~\eqref{EqDEForSingularRedOpsOfNWEs} can be rewritten in the form of the compatibility condition 
\[
\zeta_1+\zeta^*\zeta_u=\zeta^*_2+\zeta\zeta^*_u=F
\]
of the equations $u_1=\zeta^*$, $u_2=\zeta$ and $u_{12}=F$. 
It is obvious that $\zeta^*_u\ne0$. 
Due to symmetry with respect to the permutation of $x_1$ and $x_2$, 
we obtain the following statement. 

\begin{proposition}\label{PropositionOnAdjoint1stCoOrderSingularRedOpsOfSimpleNWEs}
For any equation from class~\eqref{EqSimpleNWEs},  
there exists a canonical bijection $Q\leftrightarrow Q^*$ between sets of its singular reduction operators of the forms 
$Q=\p_2+\zeta(x,u)\p_u$ and $Q^*=\p_1+\zeta^*(x,u)\p_u$, where $\zeta_u\ne0$ and $\zeta^*_u\ne0$. 
This bijection is given by the formulas
\[
Q\to Q^*\colon\quad\zeta^*=\frac{F-\zeta_1}{\zeta_u}, \qquad Q^*\to Q\colon\quad\zeta=\frac{F-\zeta^*_2}{\zeta^*_u}.
\]
A solution of equation~\eqref{EqSimpleNWEs} is invariant with respect to the operator $Q$ 
if and only if it is invariant with respect to the operator $Q^*$.
\end{proposition}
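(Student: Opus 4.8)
The plan is to exploit the symmetry of equation~\eqref{EqSimpleNWEs} under the permutation $x_1\leftrightarrow x_2$, which sends the set of first co-order singular vector fields $S=\{\p_2+\zeta\p_u\}$ to $S^*=\{\p_1+\zeta^*\p_u\}$, together with the reformulation of the determining equation as a compatibility condition. First I would recall, from the discussion preceding the statement, that for $Q=\p_2+\zeta\p_u$ with $\zeta_u\ne0$ the intersection $\mathcal L\cap\mathcal Q_{(2)}$ forces $u_1=(F-\zeta_1)/\zeta_u=:\zeta^*$, and that the single determining equation~\eqref{EqDEForSingularRedOpsOfNWEs} is equivalent to the compatibility condition $\zeta_1+\zeta^*\zeta_u=\zeta^*_2+\zeta\zeta^*_u=F$ of the overdetermined system $u_1=\zeta^*$, $u_2=\zeta$, $u_{12}=F$. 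The map $Q\mapsto Q^*$ is then \emph{defined} by $\zeta^*=(F-\zeta_1)/\zeta_u$; I must show it lands in the analogous set for $Q^*$, that it is inverted by the symmetric formula $\zeta=(F-\zeta^*_2)/\zeta^*_u$, and that it preserves the invariant-solution relation.

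The key steps, in order, are as follows. (i) Given $Q=\p_2+\zeta\p_u\in\mathfrak Q(\mathcal L)$ with $\zeta_u\ne0$, set $\zeta^*=(F-\zeta_1)/\zeta_u$; the compatibility reformulation of~\eqref{EqDEForSingularRedOpsOfNWEs} says precisely $\zeta_1+\zeta^*\zeta_u=F$ (which is the definition of $\zeta^*$) and $\zeta^*_2+\zeta\zeta^*_u=F$. By the permutation symmetry, this second relation is exactly the compatibility condition that makes $Q^*=\p_1+\zeta^*\p_u$ a reduction operator of~\eqref{EqSimpleNWEs}, i.e.\ the analogue of~\eqref{EqDEForSingularRedOpsOfNWEs} with the roles of $x_1,x_2$ and of $\zeta,\zeta^*$ interchanged. (ii) I must check $\zeta^*_u\ne0$; this is asserted in the text just after the compatibility reformulation, and follows because $\zeta^*_u=0$ would make $Q^*$ equivalent to $\p_u$, contradicting that $Q^*$ is a genuine reduction operator (alternatively, from $\zeta^*_2+\zeta\zeta^*_u=F$ with $F=F(u)$ and $\zeta^*_u=0$ one gets $\zeta^*_2=F(u)$, forcing $u$ to be determined, a degenerate situation excluded by the ansatz requirement). (iii) Applying the same construction to $Q^*$ produces $\widetilde\zeta=(F-\zeta^*_2)/\zeta^*_u$; but the relation $\zeta^*_2+\zeta\zeta^*_u=F$ from step~(i) rearranges to exactly $\zeta=(F-\zeta^*_2)/\zeta^*_u=\widetilde\zeta$, so the two maps are mutually inverse and the correspondence is a bijection. (iv) For the invariant-solution claim: if $u=f(x)$ is $Q$-invariant then $Q[f]=0$ means $f_2=\zeta(x,f)$; differentiating $u_{12}=F(u)$ and using $u_1=\zeta^*$ on solutions shows $f_1=\zeta^*(x,f)$, i.e.\ $Q^*[f]=0$; the converse is symmetric.

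The main obstacle I expect is step~(i): verifying carefully that the compatibility reformulation of~\eqref{EqDEForSingularRedOpsOfNWEs} really does decouple into the two symmetric relations $\zeta_1+\zeta^*\zeta_u=F$ and $\zeta^*_2+\zeta\zeta^*_u=F$, and that substituting $\zeta^*=(F-\zeta_1)/\zeta_u$ into the second one reproduces~\eqref{EqDEForSingularRedOpsOfNWEs} identically. This is the computational heart of the argument: one differentiates $\zeta^*=(F-\zeta_1)/\zeta_u$ with respect to $x_2$ and $u$, multiplies through by $\zeta_u^2$, and matches terms against the left-hand side of~\eqref{EqDEForSingularRedOpsOfNWEs} (here $F_1=F_uu_1=F_u\zeta^*$ since $F=F(u)$ and we are on $\mathcal Q_{(r)}$). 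Once this algebraic identity is in hand, the permutation symmetry of the class~\eqref{EqSimpleNWEs} does all the remaining work for free, and steps~(ii)--(iv) are short. I would carry out the substitution in step~(i) explicitly but only sketch (ii)--(iv), invoking the $x_1\leftrightarrow x_2$ symmetry and the characterization of reduction operators as those for which the associated ansatz reduces the equation.
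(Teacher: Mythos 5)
Your overall route is the same as the paper's: define $\zeta^*=(F-\zeta_1)/\zeta_u$, observe that the determining equation~\eqref{EqDEForSingularRedOpsOfNWEs} is exactly the compatibility condition $\zeta_1+\zeta^*\zeta_u=\zeta^*_2+\zeta\zeta^*_u=F$ of $u_1=\zeta^*$, $u_2=\zeta$, $u_{12}=F$, and then let the permutation symmetry $x_1\leftrightarrow x_2$ do the rest. Your step~(i) is precisely the paper's rewriting (and the algebra does check out: differentiating $\zeta^*\zeta_u=F-\zeta_1$ and using $F=F(u)$ reproduces~\eqref{EqDEForSingularRedOpsOfNWEs}), and steps~(iii)--(iv) are fine once $\zeta^*_u\ne0$ is secured.

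The weak point is step~(ii). Your primary argument is simply wrong: a field $Q^*=\p_1+\zeta^*(x)\p_u$ with $\zeta^*_u=0$ is \emph{not} equivalent to $\p_u$ (equivalence means multiplication by a nonvanishing function of $(x,u)$, and no multiple of $\p_u$ has a nonzero $\p_1$-component); vanishing of $\zeta^*_u$ does not make $Q^*$ degenerate, it only lowers its singularity co-order to $0$ or $-1$, exactly the cases treated earlier in the section. Your fallback argument is the right one, but it proves $\zeta^*_u\ne0$ only when $F_u\ne0$: from $\zeta^*_u=0$ and $\zeta^*_2+\zeta\zeta^*_u=F$ one gets $\zeta^*_2(x)=F(u)$, which is contradictory precisely because the two sides depend on different variables --- provided $F$ is nonconstant. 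If $F_u=0$ the conclusion genuinely fails: for $F\equiv0$ the function $\zeta=u$ satisfies~\eqref{EqDEForSingularRedOpsOfNWEs} with $\zeta_u\ne0$, yet $\zeta^*=(F-\zeta_1)/\zeta_u=0$, so the formula maps $Q=\p_2+u\p_u$ to the ultra-singular operator $\p_1$, outside the asserted target set. So your proof (like the paper's terse ``it is obvious that $\zeta^*_u\ne0$'') only establishes the bijection in the nondegenerate case $F_u\ne0$; to be honest about the statement as phrased you should either restrict to $F_u\ne0$ or explicitly isolate and discuss the constant-$F$ degeneracy (compare the paper's remark on $F_u=0$ following Corollary~\ref{CorollaryOnReductionOfDetEqsForSingularRedOpsOfSimpleWEs}). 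Apart from this point, your argument coincides with the paper's.
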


\begin{theorem}\label{TheoremOn1parametricSolutionFamiliesAndSingularRedOpsForSimpleWEs}
Up to the equivalence of solution families, for any equation from class~\eqref{EqSimpleNWEs} with $F_u\ne0$ 
there exists a one-to-one correspondence between one-parametric families of its solutions 
and reduction operators of the form $Q=\p_2+\zeta(x,u)\p_u$, where $\zeta_u\ne0$ (resp. $Q^*=\p_1+\zeta^*(x,u)\p_u$, where $\zeta^*_u\ne0$).
Namely, any such operator corresponds to the family of solutions which are invariant with respect to this operator. 
The problems of the construction of all one-parametric solution families of an equation from class~\eqref{EqSimpleNWEs} with $F_u\ne0$ 
and the exhaustive description of its reduction operators of the above form are completely equivalent.
\end{theorem}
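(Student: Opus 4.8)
The plan is to mimic the structure of the proof of Theorem~\ref{TheoremUnitedOnSetsOfSolutionsAndReductionOperatorsWithTau0OfSystemsOfEqlsp} for evolution equations, adapting it to the situation where the invariant variable is now a genuine light-cone coordinate rather than time, and where the reduced equation is first order. Fix an equation $\mathcal L$ from class~\eqref{EqSimpleNWEs} with $F_u\ne0$ and an operator $Q=\p_2+\zeta(x,u)\p_u$ with $\zeta_u\ne0$ satisfying the determining equation~\eqref{EqDEForSingularRedOpsOfNWEs}. Since $\zeta_u\ne0$ the invariant surface condition $u_2=\zeta(x,u)$ can be solved (via the corresponding ansatz $\zeta=\varphi(\omega)$ in the notation of Section~\ref{SectionOnDefOfRedOps}) to give a genuine ansatz $u=f(x_1,x_2,\varphi(\omega))$ with $\omega=x_1$ the invariant independent variable and $f_\varphi\ne0$. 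By Proposition~\ref{PropositionOnSingularVectorFieldsOfSimpleNWEs} and the analysis preceding~\eqref{EqDEForSingularRedOpsOfNWEs}, $Q$ has weak singularity co-order $1$, so the reduced equation $\check{\mathcal L}$ is of essential order $1$: it is a first-order ODE in $\varphi(\omega)$. Its general solution is then a one-parameter family $\varphi=\varphi(\omega,\varkappa)$ with $\varphi_\varkappa\ne0$, and substituting back into the ansatz yields a one-parameter family $\mathcal F=\{u=\tilde f(x_1,x_2,\varkappa)\}$ of solutions of $\mathcal L$. One must check that $\varkappa$ is essential, i.e.\ $\tilde f_\varkappa\ne0$; this follows from $f_\varphi\ne0$ and $\varphi_\varkappa\ne0$. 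Expressing $\varkappa=\Phi(x,u)$ with $\Phi_u\ne0$ and differentiating the identity $u=\tilde f(x,\Phi(x,u))$ with respect to $x_2$ shows $u_2=-\Phi_2/\Phi_u$ along $\mathcal F$, hence $\zeta=-\Phi_2/\Phi_u$ identically, which recovers $\zeta$ from the family $\mathcal F$.

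Conversely, start from a one-parameter family $\mathcal F=\{u=f(x_1,x_2,\varkappa)\}$ of solutions of $\mathcal L$ with $f_\varkappa\ne0$. Solve for $\varkappa=\Phi(x,u)$ with $\Phi_u\ne0$ and define $\zeta=-\Phi_2/\Phi_u$; then $\zeta_u\ne0$ (this needs a short argument: if $\zeta_u$ vanished identically the family would degenerate, contradicting essentiality of $\varkappa$ — more precisely, $\zeta_u=0$ would force, through the already-established correspondence in Theorem~\ref{TheoremOnSolutionsAndSingularRedOpsForSimpleWEs}, that $\mathcal F$ reduces to a single solution). Set $Q=\p_2+\zeta\p_u$; by construction $Q[u]=0$ for every $u\in\mathcal F$. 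The ansatz associated with $Q$ reduces $\mathcal L$ to a first-order ODE, and since $\mathcal F$ already provides a one-parameter solution family of that reduced equation, $Q$ is a reduction operator of $\mathcal L$ by the ansatz-reduction criterion of~\cite{Zhdanov&Tsyfra&Popovych1999}; hence $\zeta$ satisfies~\eqref{EqDEForSingularRedOpsOfNWEs}. Finally one verifies that the two constructions are mutually inverse up to the equivalence of solution families introduced in Note~\ref{NoteOnEquiv1ParamFamiliesOfSolutions}: two families differing only by a reparametrization $\varkappa\mapsto g(\varkappa)$ produce the same $\Phi$ up to composition with $g$, hence the same $\zeta$ and the same $Q$; and conversely the reduced ODE of a given $Q$ has a solution family unique up to the constant of integration. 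The $Q^*$ statement follows by the permutation symmetry $x_1\leftrightarrow x_2$ of class~\eqref{EqSimpleNWEs}, exactly as in Proposition~\ref{PropositionOnAdjoint1stCoOrderSingularRedOpsOfSimpleNWEs}.

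The last sentence of the theorem — equivalence of the two global problems — is then a formal consequence: the bijection just constructed is effective in both directions, so an algorithm producing all reduction operators of the stated form yields, via the ansatz and the trivially integrable first-order reduced equation, all one-parameter solution families, and vice versa.

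The main obstacle I anticipate is not the forward construction of solutions from an operator (that is routine once weak singularity co-order $1$ guarantees a first-order reduced equation) but rather the \emph{converse direction and the well-definedness up to equivalence}: one must show that the $\zeta$ manufactured from an arbitrary one-parameter family genuinely satisfies the determining equation~\eqref{EqDEForSingularRedOpsOfNWEs} and genuinely has $\zeta_u\ne0$, and that replacing $\mathcal F$ by an equivalent family (or by the same $Q$'s reduced-equation solution family) gives back the same operator. The subtle point is ruling out the degenerate overlap with the co-order-$0$ case of Theorem~\ref{TheoremOnSolutionsAndSingularRedOpsForSimpleWEs}: a one-parameter family whose members happen to all satisfy a single relation $u_2=\zeta(x)$ with $\zeta_u=0$ does not belong here, and one has to argue that essentiality of $\varkappa$ precisely excludes this, so that the partition of solution \emph{families} by $\zeta_u\ne0$ versus individual solutions by $\zeta_u=0$ is clean. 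This is where the interplay between the number of essential parameters and the (weak) singularity co-order — the second bulleted conclusion of the Introduction — does the real work.
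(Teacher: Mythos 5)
Your overall strategy is the same as the paper's (forward direction via the ansatz and a first-order reduced equation; converse via $\varkappa=\Phi(x,u)$, $\zeta=-\Phi_2/\Phi_u$, with essentiality of $\varkappa$ and $F_u\ne0$ excluding $\zeta_u=0$), but the converse direction contains a genuine gap. The sentence ``the ansatz associated with $Q$ reduces $\mathcal L$ to a first-order ODE, and since $\mathcal F$ already provides a one-parameter solution family of that reduced equation, $Q$ is a reduction operator by the ansatz-reduction criterion'' is circular: by that very criterion, the claim that the ansatz reduces $\mathcal L$ \emph{is} the claim that $Q$ is a reduction operator, and it is exactly what must be proved. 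A priori, substituting $u=f(x,\varphi(\omega))$ into $u_{12}-F(u)$ yields an expression still containing $x_2$ explicitly, and nothing in your argument rules this out. The paper closes this step by a short computation, which is also where $F_u\ne0$ enters quantitatively: differentiating $f_{12}=F(f)$ with respect to $\varkappa$ gives $f_{12\varkappa}=f_\varkappa F_u(f)\ne0$, hence $f_{2\varkappa}\ne0$; then the substitution of the ansatz gives $f_{12}+f_{2\varphi}\varphi_\omega-F(f)=f_{2\varphi}\varphi_\omega$, so the equation reduces precisely to $\varphi_\omega=0$, an ODE in $\omega$ alone, and only now does the criterion of \cite{Zhdanov&Tsyfra&Popovych1999} give $Q\in\mathfrak Q(\mathcal L)$ and hence that $\zeta$ satisfies~\eqref{EqDEForSingularRedOpsOfNWEs}.

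Alternatively, the route you hint at in your closing paragraph can be made rigorous: once $\zeta_u\ne0$ is secured (your appeal to Theorem~\ref{TheoremOnSolutionsAndSingularRedOpsForSimpleWEs} works, or directly: $\zeta_u=0$ would give $F(u)=u_{12}=\zeta_1(x)$, whence, by invertibility of $F$, a $\varkappa$-independent $u$, contradicting $f_\varkappa\ne0$), the associated function $\zeta_1+\zeta_uu_1-F(u)$ is of order one and linear in $u_1$ with nonvanishing coefficient $\zeta_u$, so $\wsco_{\mathcal L}Q=1$ and the maximal-rank hypothesis of Corollary~\ref{CorollaryOnSingularRedOpsAndFamiliesOfInvSolutions2DCaseC} holds; since $\mathcal F$ is a one-parametric family of $Q$-invariant solutions, that corollary yields $Q\in\mathfrak Q(\mathcal L)$. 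But you must actually invoke this (or perform the computation above); as written, the decisive step is assumed rather than established. A minor further point: Proposition~\ref{PropositionOnSingularVectorFieldsOfSimpleNWEs}, which you cite for the co-order-one property, concerns evolution equations; the relevant fact for~\eqref{EqSimpleNWEs} is the singularity analysis at the beginning of Section~\ref{SectionOnExampleOfSimpleNWEs}, which you also cite and which suffices.
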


\begin{proof}
In view of Proposition~\ref{PropositionOnAdjoint1stCoOrderSingularRedOpsOfSimpleNWEs}, it is sufficient to consider only 
operators with zero coefficient of $\p_1$. 
Although the proof is similar to the proof of the analogous statement for evolution equations it differs from it 
in essential details and will therefore be presented completely.

An ansatz constructed with the operator $Q=\p_2+\zeta(x,u)\p_u$ has the form $u=f(x,\varphi(\omega))$, 
where $f=f(x,\varphi)$ is a given function, $f_\varphi\ne0$, 
$\varphi=\varphi(\omega)$ is the new unknown function and $\omega=x_1$ is the invariant independent variable.
Here $\zeta_u\ne0$ implies $f_{2\varphi}\ne0$. 
Hence this ansatz reduces equation~\eqref{EqSimpleNWEs} to a first-order ordinary differential equation~$\mathcal L'$ in~$\varphi$, 
which is solvable with respect to~$\varphi'$. 
The general solution of the reduced equation~$\mathcal L'$ essentially depends on an arbitrary constant~$\varkappa$:
$\varphi=\varphi(\omega,\varkappa)$, where $\varphi_\varkappa\ne0$. 
Substituting the general solution into the ansatz gives 
the one-parametric family~$\mathcal F$ of solutions $u=\tilde f(x,\varkappa)$ of~\eqref{EqSimpleNWEs} 
with $\tilde f=f(x,\varphi(x_1,\varkappa))$. 

Conversely, 
suppose that $F_u\ne0$ and $\mathcal F=\{u=f(x,\varkappa)\}$ is a one-parametric family of solutions of~\eqref{EqSimpleNWEs}. 
The derivative $f_\varkappa$ is nonzero since the parameter $\varkappa$ is essential. 
Therefore, $f_{12\varkappa}=f_\varkappa F_u(f)\ne0$.
We express $\varkappa$ from the equality $u=f(x,\varkappa)$: $\varkappa=\Phi(x,u)$ for some function $\Phi=\Phi(x,u)$ 
with $\Phi_u\ne0$. 
Consider the operator $Q=\p_2+\zeta\p_u$, where the coefficient $\zeta=\zeta(x,u)$ is defined by the formula $\zeta=-\Phi_2/\Phi_u$. 
$Q[u]=0$ for any $u\in\mathcal F$. 
The ansatz $u=f(x,\varphi(\omega))$, where $\omega=x_1$, associated with~$Q$, 
reduces~\eqref{EqSimpleNWEs} to the equation $\varphi_\omega=0$ since $f_{2\varkappa}\ne0$.
Therefore \cite{Zhdanov&Tsyfra&Popovych1999}, $Q$ is a reduction operator of equation~\eqref{EqSimpleNWEs} 
and hence the function $\zeta$ satisfies equation~\eqref{EqDEForSingularRedOpsOfNWEs}. 
Moreover, we have $\zeta_u\ne0$ since otherwise the operator $Q$ would reduce~\eqref{EqSimpleNWEs} to an algebraic equation 
with respect to~$\varphi$.
\end{proof}

\begin{corollary}\label{CorollaryOnFamiliesOfInvSolutionsForAdjointSingularRedOpsOfSimpleNWEs}
Any adjoint singular reduction operators $Q=\p_2+\zeta(x,u)\p_u$ and $Q^*=\p_1+\zeta^*(x,u)\p_u$ 
of equation~\eqref{EqSimpleNWEs} (where necessarily $\zeta_u\ne0$ and $\zeta^*_u\ne0$) are associated with 
the same one-parametric family of solutions of this equation. 
\end{corollary}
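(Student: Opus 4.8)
The plan is to reduce the claim to the two preceding results by characterizing the one-parametric family attached to a singular reduction operator purely in terms of invariance. First I would recall, from the proof of Theorem~\ref{TheoremOn1parametricSolutionFamiliesAndSingularRedOpsForSimpleWEs}, that the family~$\mathcal F$ associated with $Q=\p_2+\zeta(x,u)\p_u$ (with $\zeta_u\ne0$) is obtained by substituting the general solution $\varphi=\varphi(\omega,\varkappa)$ of the reduced first-order ODE into the $Q$-ansatz $u=f(x,\varphi(\omega))$, $\omega=x_1$; every member of~$\mathcal F$ therefore satisfies the invariant surface condition $Q[u]=0$. Conversely, any solution $u=f(x)$ of~\eqref{EqSimpleNWEs} with $Q[f]=0$ is, by construction of the ansatz, of the form $f(x)=f(x,\varphi(\omega))$ for some function $\varphi$ of~$\omega$, and since $f$ solves~\eqref{EqSimpleNWEs} this $\varphi$ solves the reduced ODE, hence $f\in\mathcal F$. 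Thus $\mathcal F$ is exactly the set of $Q$-invariant solutions of~\eqref{EqSimpleNWEs}; applying the same argument to $Q^*=\p_1+\zeta^*(x,u)\p_u$ shows that the family~$\mathcal F^*$ associated with $Q^*$ is exactly the set of $Q^*$-invariant solutions.

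Next I would invoke Proposition~\ref{PropositionOnAdjoint1stCoOrderSingularRedOpsOfSimpleNWEs}, whose last assertion is precisely that a solution of~\eqref{EqSimpleNWEs} is invariant with respect to~$Q$ if and only if it is invariant with respect to the adjoint operator~$Q^*$. Combining this equivalence with the intrinsic description of the two families from the previous paragraph gives $\mathcal F=\mathcal F^*$ as sets of functions. By the convention of Note~\ref{NoteOnEquiv1ParamFamiliesOfSolutions}, two one-parametric families consisting of the same functions are equivalent (they may differ only by a reparametrization), so $\mathcal F$ and $\mathcal F^*$ are the same one-parametric family up to equivalence, which is the assertion of the corollary.

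I do not expect a serious obstacle, since the statement is essentially a bookkeeping consequence of Theorem~\ref{TheoremOn1parametricSolutionFamiliesAndSingularRedOpsForSimpleWEs} and Proposition~\ref{PropositionOnAdjoint1stCoOrderSingularRedOpsOfSimpleNWEs}. The one point that needs a little care is the intrinsic characterization of~$\mathcal F$ as the full set of $Q$-invariant solutions, i.e.\ checking that no solution of~\eqref{EqSimpleNWEs} satisfying $Q[u]=0$ escapes the family produced by the reduction procedure; this is exactly the argument above, where the invariant surface condition forces such a solution into the ansatz form and the equation then forces its profile to solve the first-order, uniquely $\varphi'$-solvable reduced ODE. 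One should also note that $\zeta_u\ne0$ and $\zeta^*_u\ne0$ are automatic for adjoint singular operators (as recorded in Proposition~\ref{PropositionOnAdjoint1stCoOrderSingularRedOpsOfSimpleNWEs} and the discussion preceding~\eqref{EqDEForSingularRedOpsOfNWEs}), so both $\mathcal F$ and $\mathcal F^*$ are genuinely one-parametric and the identification is between objects of the same type.
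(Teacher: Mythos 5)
Your argument is correct and is exactly the route the paper intends: the corollary is left without an explicit proof precisely because it follows by combining Theorem~\ref{TheoremOn1parametricSolutionFamiliesAndSingularRedOpsForSimpleWEs} (each singular operator corresponds to its family of invariant solutions) with the last assertion of Proposition~\ref{PropositionOnAdjoint1stCoOrderSingularRedOpsOfSimpleNWEs} ($Q$-invariance is equivalent to $Q^*$-invariance), modulo the equivalence of families in Note~\ref{NoteOnEquiv1ParamFamiliesOfSolutions}. Your extra check that the family exhausts all $Q$-invariant solutions is a sensible piece of diligence but introduces nothing beyond the paper's implicit argument.
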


Let $\zeta$ be an arbitrary solution of equation~\eqref{EqDEForSingularRedOpsOfNWEs}. 
Then $\zeta_u\ne0$ and $Q=\p_2+\zeta(x,u)\p_u$ is a reduction operator of equation~\eqref{EqSimpleNWEs}.  
Consider a one-parametric family $\mathcal F=\{u=f(x,\varkappa)\}$ of solutions of~\eqref{EqSimpleNWEs}, 
which are invariant with respect to~$Q$. 
(Such a family exists in view of Theorem~\ref{TheoremOn1parametricSolutionFamiliesAndSingularRedOpsForSimpleWEs}.)
Expressing the parameter~$\varkappa$ from the equality  $u=\tilde f(x,\varkappa)$, 
we obtain that $\varkappa=\Phi(x,u)$, where $\Phi_u\ne0$. 
$\zeta=u_2=-\Phi_2/\Phi_u$ for any $u\in\mathcal F$, i.e., for any admissible values of $(x,\varkappa)$. 
This implies that the representation $\zeta=-\Phi_2/\Phi_u$ is true for any admissible value of $(x,u)$. 
This provides the background for the following statement.

\begin{corollary}\label{CorollaryOnReductionOfDetEqsForSingularRedOpsOfSimpleWEs}
The nonlinear three-dimensional equation~\eqref{EqDEForSingularRedOpsOfNWEs} is reduced by composition of 
the B\"acklund transformation $\zeta=-\Phi_2/\Phi_u$, $\zeta^*=-\Phi_1/\Phi_u$,
where $\Phi$ is a function of $(x,u)$, and the hodograph transformation 
\begin{gather*}
\mbox{the new independent variables:}\qquad\tilde x_1=x_1, \quad \tilde x_2=x_2, \quad \varkappa=\Phi, 
\\  
\lefteqn{\mbox{the new dependent variable:}}\phantom{\mbox{the new independent variables:}\qquad }\tilde u=u 
\end{gather*}
to the equation~\eqref{EqSimpleNWEs} for the function $\tilde u=\tilde u(\tilde x,\varkappa)$ 
with $\varkappa$ playing the role of a parameter.
\end{corollary}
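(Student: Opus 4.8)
The plan is to trace explicitly what the two substitutions do to equation~\eqref{EqDEForSingularRedOpsOfNWEs} and to its compatibility-condition reformulation $\zeta_1+\zeta^*\zeta_u=\zeta^*_2+\zeta\zeta^*_u=F$, showing that the composite transformation converts a solution $\zeta=\zeta(x,u)$ (with $\zeta_u\ne0$, hence the adjoint $\zeta^*$ is well defined via Proposition~\ref{PropositionOnAdjoint1stCoOrderSingularRedOpsOfSimpleNWEs}) into a solution $\tilde u(\tilde x_1,\tilde x_2,\varkappa)$ of~\eqref{EqSimpleNWEs}, and conversely. Rather than manipulating the third-order-looking PDE directly, I would exploit the geometric content already established: by Theorem~\ref{TheoremOn1parametricSolutionFamiliesAndSingularRedOpsForSimpleWEs} a solution $\zeta$ of~\eqref{EqDEForSingularRedOpsOfNWEs} corresponds to a one-parametric family $\mathcal F=\{u=f(x,\varkappa)\}$ of solutions of~\eqref{EqSimpleNWEs}, and the B\"acklund relations $\zeta=-\Phi_2/\Phi_u$, $\zeta^*=-\Phi_1/\Phi_u$ are exactly the statement that $\zeta=u_2$ and $\zeta^*=u_1$ along each member of $\mathcal F$, where $\varkappa=\Phi(x,u)$ is the inverse of $u=f(x,\varkappa)$. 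So the first step is to record that the substitution $\zeta=-\Phi_2/\Phi_u$, $\zeta^*=-\Phi_1/\Phi_u$ reparametrizes~\eqref{EqDEForSingularRedOpsOfNWEs} as a (second-order) equation on the single unknown $\Phi=\Phi(x,u)$, namely the condition that the implicitly defined function $u=f(x,\varkappa)$ (with $\varkappa$ frozen) satisfy $u_{12}=F(u)$ for every value of~$\varkappa$.

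The second step is to apply the hodograph transformation $\tilde x_i=x_i$, $\varkappa=\Phi(x,u)$, $\tilde u=u$, which simply interchanges the roles of $u$ and $\varkappa$: the graph $u=f(x,\varkappa)$ becomes the graph $\tilde u=\tilde u(\tilde x,\varkappa)$ with $\tilde u=u$, $\varkappa$ now a genuine independent variable. Under this change $\p_{x_i}|_{\varkappa}$ becomes $\p_{\tilde x_i}|_{\tilde u}$ shifted by the $\varkappa$-transport terms; but since we are differentiating along the family at fixed $\varkappa$, the mixed derivative $u_{12}$ computed at fixed $\varkappa$ equals $\tilde u_{\tilde 1\tilde 2}$ computed at fixed $\varkappa$, and $F(u)=F(\tilde u)$. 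Hence the equation on $\Phi$ obtained in the first step is literally $\tilde u_{\tilde 1\tilde 2}=F(\tilde u)$, i.e.~\eqref{EqSimpleNWEs} for $\tilde u$ with $\varkappa$ a parameter. I would carry out the chain-rule bookkeeping only far enough to verify that no spurious $\varkappa$-derivatives survive, using $\Phi_u\ne0$ (equivalently $f_\varkappa\ne0$) to guarantee invertibility throughout.

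The third step is the converse: given a solution $\tilde u=\tilde u(\tilde x,\varkappa)$ of~\eqref{EqSimpleNWEs} with $\tilde u_\varkappa\ne0$ (essentiality of the parameter, which, as in the proof of Theorem~\ref{TheoremOn1parametricSolutionFamiliesAndSingularRedOpsForSimpleWEs}, follows from $\tilde u_{\tilde 1\tilde 2\varkappa}=\tilde u_\varkappa F_u(\tilde u)\ne0$ once $F_u\ne0$), one inverts to get $\varkappa=\Phi(x,u)$ with $\Phi_u\ne0$, sets $\zeta=-\Phi_2/\Phi_u$ and $\zeta^*=-\Phi_1/\Phi_u$, and reads off that $\zeta_u\ne0$ and that $\zeta$ solves~\eqref{EqDEForSingularRedOpsOfNWEs} — precisely the content already packaged in Theorem~\ref{TheoremOn1parametricSolutionFamiliesAndSingularRedOpsForSimpleWEs} and Proposition~\ref{PropositionOnAdjoint1stCoOrderSingularRedOpsOfSimpleNWEs}. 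The main obstacle I anticipate is purely computational rather than conceptual: keeping the total-derivative chain rule straight when one switches between "$u$ dependent, $\varkappa$ a label" and "$\varkappa$ independent, $\tilde u$ dependent", in particular checking that the identity $\zeta=u_2$, $\zeta^*=u_1$ along $\mathcal F$ is compatible with both the adjointness formulas of Proposition~\ref{PropositionOnAdjoint1stCoOrderSingularRedOpsOfSimpleNWEs} and the compatibility form $\zeta_1+\zeta^*\zeta_u=\zeta^*_2+\zeta\zeta^*_u=F$. Everything else reduces to invertibility statements already secured by $F_u\ne0$ and $\zeta_u\ne0$, so the corollary follows by assembling Theorem~\ref{TheoremOn1parametricSolutionFamiliesAndSingularRedOpsForSimpleWEs}, Proposition~\ref{PropositionOnAdjoint1stCoOrderSingularRedOpsOfSimpleNWEs}, and the transparent action of the hodograph map.
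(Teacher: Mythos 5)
Your argument is essentially correct when $F_u\ne0$, but it follows a different route from the paper's proof, and that difference carries a small gap in generality. The paper does not construct $\Phi$ through the invariant-solution correspondence at all: it takes an arbitrary solution $\zeta$ of~\eqref{EqDEForSingularRedOpsOfNWEs}, sets $\zeta^*=(F-\zeta_1)/\zeta_u$, and observes that the linear first-order system $\Phi_2+\zeta\Phi_u=0$, $\Phi_1+\zeta^*\Phi_u=0$ is compatible by the Frobenius theorem, precisely because its compatibility condition $\zeta_1+\zeta^*\zeta_u=\zeta^*_2+\zeta\zeta^*_u$ is the determining equation. Choosing a nonconstant $\Phi$ gives $\Phi_u\ne0$, $\zeta=-\Phi_2/\Phi_u$, $\zeta^*=-\Phi_1/\Phi_u$, and after the hodograph transformation these two relations read $\tilde u_{\tilde x_2}=\zeta(\tilde x,\tilde u)$, $\tilde u_{\tilde x_1}=\zeta^*(\tilde x,\tilde u)$, whence $\tilde u_{\tilde x_1\tilde x_2}=\zeta_1+\zeta_u\zeta^*=F(\tilde u)$ immediately; no further chain-rule bookkeeping is needed, and essentiality of $\varkappa$ is just $\tilde u_\varkappa=1/\Phi_u\ne0$. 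Your route instead obtains $\Phi$ by inverting the family of $Q^\zeta$-invariant solutions supplied by Theorem~\ref{TheoremOn1parametricSolutionFamiliesAndSingularRedOpsForSimpleWEs} (this is in fact the heuristic ``background'' paragraph preceding the corollary in the paper), which buys geometric transparency but makes the forward direction depend on the hypothesis $F_u\ne0$ under which that theorem is stated, whereas the corollary itself is asserted for arbitrary smooth~$F$; for $F_u=0$ the one-to-one correspondence is the more delicate one of Note~\ref{NoteOn1parametricSolutionFamiliesAndSingularRedOpsForSimpleWEsWithFu0}, so to cover that case you would either have to invoke that refinement or simply switch to the Frobenius construction, which works uniformly. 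The converse step in your proposal coincides with the paper's (both reduce it to the proof of Theorem~\ref{TheoremOn1parametricSolutionFamiliesAndSingularRedOpsForSimpleWEs}), and your use of Proposition~\ref{PropositionOnAdjoint1stCoOrderSingularRedOpsOfSimpleNWEs} to define $\zeta^*$ matches the paper's choice $\zeta^*=(F-\zeta_1)/\zeta_u$.
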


\begin{proof}
We take an arbitrary solution $\zeta$ of equation~\eqref{EqDEForSingularRedOpsOfNWEs} 
(the condition $\zeta_u\ne0$ is implicitly assumed to be satisfied) and set $\zeta^*=(F-\zeta_1)/\zeta_u$. 
In view of the Frobenius theorem, the system $\Phi_2+\zeta\Phi_u=0$, $\Phi_1+\zeta^*\Phi_u=0$ 
with respect to the function $\Phi=\Phi(x,u)$ is compatible since 
its compatibility condition $\zeta_1+\zeta^*\zeta_u=\zeta^*_2+\zeta\zeta^*_u$ coincides with~\eqref{EqDEForSingularRedOpsOfNWEs} 
and hence is identically satisfied. 
We choose a nonconstant solution~$\Phi$ of this system. 
Then $\Phi_u\ne0$, $\zeta=-\Phi_2/\Phi_u$ and $\zeta^*=-\Phi_1/\Phi_u$. 
After the hodograph transformation, 
the latter equations take the form $\tilde u_{\tilde x_2}=\zeta(\tilde x,\tilde u)$ and $\tilde u_{\tilde x_1}=\zeta^*(\tilde x,\tilde u)$. 
This directly implies that for any value of $\varkappa$ the function $\tilde u=\tilde u(\tilde x,\varkappa)$ satisfies equation~\eqref{EqSimpleNWEs}. 
The parameter $\varkappa$ is essential in $\tilde u$ since $\tilde u_\varkappa=1/\Phi_u\ne0$.

It follows from the proof of Theorem~\ref{TheoremOn1parametricSolutionFamiliesAndSingularRedOpsForSimpleWEs} 
that the application of the inverse transformations to a one-parametric family of solutions of equation~\eqref{EqSimpleNWEs} 
results in a solution of equation~\eqref{EqDEForSingularRedOpsOfNWEs}.
\end{proof}

\begin{note}\label{NoteOn1parametricSolutionFamiliesAndSingularRedOpsForSimpleWEsWithFu0}
For any equation from class~\eqref{EqSimpleNWEs} with $F_u=0$,  
reduction operators of the form $Q=\p_2+\zeta(x,u)\p_u$, where $\zeta_u\ne0$ (resp. $Q^*=\p_1+\zeta^*(x,u)\p_u$, where $\zeta^*_u\ne0$) 
also are bijectively associated with one-parametric families of its solutions, 
having the form $\{u=f(x,\varkappa)\}$ where $f_{1\varkappa}\ne0$ (resp. $f_{2\varkappa}\ne0$). 
The one-parametric families with $f_{1\varkappa}=0$ (resp. $f_{2\varkappa}=0$) necessarily existing in this case
correspond to ultra-singular reduction operators with $\zeta_u=0$ (resp. $\zeta^*_u=0$), and the correspondence is not one-to-one. 
\end{note}

The above investigation of singular reduction operators of nonlinear wave equations of the form~\eqref{EqSimpleNWEs}  
shows that for these equations the natural partition of the corresponding sets of reduction operators is 
into triples of subsets singled out by the conditions 
\[
1)\ \xi^1=0; \quad 2)\ \xi^2=0;  \quad 3)\ \xi^1\xi^2\ne0.
\]
After the factorization with respect to the equivalence relation of vector fields, 
we obtain three subsets of reduction operators, which have to be investigated separately.
The defining conditions for these subsets are, respectively, 
\[
1)\ \xi^1=0,\ \xi^2=1;\quad 2)\ \xi^2=0,\ \xi^1=1;  \quad 3)\ \xi^1\ne0,\ \xi^2=1.
\]
Since any equation from class~\eqref{EqSimpleNWEs} admits the point symmetry permuting~$x_1$ and~$x_2$, 
the second case is reduced to the first one and can be omitted. 
Finally we have two essentially different cases after factorization: 
the singular case $\xi^1=0$, $\xi^2=1$ and the regular case $\xi^1\ne0$, $\xi^2=1$.
The gauge $\xi^2=1$ is not uniquely possible in the regular case and may be varied for optimizing
the further consideration of this case. 

Consider the other standard form  
\begin{equation}\label{EqSimpleNWEsInUsualVariables}
u_{11}-u_{22}=F(u)
\end{equation}
of nonlinear wave equations, obtained from~\eqref{EqSimpleNWEs} via the point transformation 
$\tilde x_1=x_1-x_2$, $\tilde x_2=x_1+x_2$, $\tilde u=u$. 
Using this transformation, all the results derived for class~\eqref{EqSimpleNWEs} 
can easily be extended to class~\eqref{EqSimpleNWEsInUsualVariables}. 
Thus, any equation of the form~\eqref{EqSimpleNWEsInUsualVariables} 
possesses two singular sets of reduction operators, singled out by the conditions $\xi^1=-\xi^2$ and $\xi^1=\xi^2$,
and one regular set of reduction operators, associated with the condition $\xi^1\ne\pm\xi^2$. 
The singular sets are mapped to each other by alternating the sign of~$x_2$ 
and hence one of them can be excluded from the consideration.  
After factorization with respect to the equivalence relation of vector fields, 
we have two cases for our further study:
the singular case $\xi^1=\xi^2=1$ and the regular case $\xi^1\ne\pm1$, $\xi^2=1$.

For nonlinear wave equations of the general form \[u_{11}-(G(u)u_2)_2=F(u),\] where $G(u)>0$, 
the natural partitions of the sets of reduction operators are determined by more complicated conditions 
depending on the parameter-function~$G$. 
We will not discuss these equation here. 
We only remark that the singular sets of the corresponding reduction operators are associated with the conditions 
$\xi^2=\sqrt{G}\xi^1$ and $\xi^2=-\sqrt{G}\xi^1$, respectively.

The above examples underline that the application of the conventional partition for factorization of sets of reduction operators 
often leads to the splitting of uniform cases and to combining essentially different ones. 
As a result, the derived systems of determining equations for the coefficients of reduction operators 
is far from optimal and difficult to investigate. 
Therefore, natural partitions based on taking into account the structure of singular families 
of reduction operators offers a decisive advantage.

\section{Reduction operators and parametric families of solutions}%
\label{SectionOnReductionOperatorsAndParametricFamiliesOfSolutions}

\begin{proposition}\label{PropositionOnSingularRedOpsAndReducedEqs2DCase}
Let $Q$ be a reduction operator of an equation~$\mathcal L$. 
Then the weak singularity co-order of~$Q$ for~$\mathcal L$ equals
the essential order of the corresponding reduced ordinary differential equation.
\end{proposition}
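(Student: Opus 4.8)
The plan is to unwind both sides of the claimed equality directly from the definitions collected in Section~\ref{SectionOnDefOfRedOps}. Let $r=\ord L$ and let $Q$ be a reduction operator of $\mathcal L$. Since $Q$ is a reduction operator, the ansatz $\zeta=\varphi(\omega)$ associated with $Q$ reduces $\mathcal L$ to an ordinary differential equation $\check{\mathcal L}$: $\check L[\varphi]=0$, and by the discussion following Definition~\ref{DefinitionOfCondSym} there are differential functions $\check\lambda[\varphi]$ and $\check L[\varphi]$ of order at most $r$ with $L|_{u=\varphi(\omega)}=\check\lambda\check L$, where $\check\lambda$ is nonvanishing and $\check L$ is of minimal order $\check r$ up to nonvanishing multipliers; $\check r$ is by definition the essential order of $\check{\mathcal L}$. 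First I would identify the substitution $u=\varphi(\omega)$ with passing to the coordinates on $\mathcal Q_{(r)}$ used in Section~\ref{SectionOnSingularVectorFieldsOfDiffFunctionsIn2DCase}: after a point transformation bringing $Q$ to reduced form $Q=\xi\p_1+\p_2+\zeta\p_u$ (equivalently, after choosing coordinates in which $\omega$ is the invariant variable and the ansatz reads $u=\varphi(\omega)$), the derivatives of $u$ along $\mathcal Q_{(r)}$ are exactly the quantities $\omega_\alpha=D_1^{\alpha_1}(Q)^{\alpha_2}u$, and on $\mathcal Q_{(r)}$ those with $\alpha_2>0$ are expressed through the lower-order ones $\omega_{(\alpha_1,0)}$, which correspond precisely to $\omega$ and the derivatives $\varphi',\varphi'',\dots$ of the reduced unknown function.

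The key step is then the following dictionary. The differential function $\hat L$ associated with $L$ on $\mathcal Q_{(r)}$ — obtained by eliminating all derivatives of $u$ involving differentiation with respect to $x_2$ — is, under this identification of coordinates, nothing but $L|_{u=\varphi(\omega)}$ viewed as a function of $\omega$ and the $\varphi$-derivatives, i.e. $\hat L=\check\lambda\check L$ where the equality is now between differential functions in $\varphi$. By Definition~\ref{DefinitionOfWeaklySingularVectorFild}, the weak singularity co-order $\wsco_{\mathcal L}Q$ is the minimal order of a differential function whose restriction to $\mathcal Q_{(r)}$ coincides, up to a nonvanishing multiplier, with $L|_{\mathcal Q_{(r)}}$; but $L|_{\mathcal Q_{(r)}}$ corresponds to $\hat L=\check\lambda\check L$, and dividing out the nonvanishing $\check\lambda$ shows this minimal order is exactly the minimal order of $\check L$ up to nonvanishing multipliers, namely $\check r$. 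Thus $\wsco_{\mathcal L}Q=\check r=$ essential order of $\check{\mathcal L}$. I would present this as two inequalities: any reduction of $L|_{\mathcal Q_{(r)}}$ to lower order by a nonvanishing multiplier transports, via the coordinate change, to a reduction of $\check L$ of the same order, and conversely; minimality on one side forces minimality on the other.

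The main obstacle will be making the identification of $\hat L$ with $\check\lambda\check L$ fully rigorous, in particular checking that the change of coordinates $\{x_i,u_\alpha\}\rightsquigarrow\{x_i,\omega_\alpha\}$ used in Section~\ref{SectionOnSingularVectorFieldsOfDiffFunctionsIn2DCase} really does intertwine ``restriction to $\mathcal Q_{(r)}$ and elimination of $x_2$-differentiations'' with ``substitution of the ansatz $u=\varphi(\omega)$'', and that the auxiliary variable $\theta$ (on which $\check\lambda$ may depend as a parameter) does not interfere with the order count — it enters only as a parameter and so does not raise the order in $\varphi$. A secondary point requiring care is that the notion of essential order of $\check{\mathcal L}$ and the notion of weak singularity co-order both involve minimization ``up to nonvanishing multipliers'', and one must verify these two minimizations are over the same class of allowed multipliers (functions of $x,u$ versus differential functions of $\varphi$, matched under the coordinate change); once this is pinned down the equality of the two minimal orders is immediate. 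The rest is bookkeeping with the prolongation formulas already recorded in~\eqref{EqSingularRedOpsExpressionsForU_2}.
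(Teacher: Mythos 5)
Your overall route is the same as the paper's: transform $Q$ to the form $\p_{x_2}$, identify restriction to $\mathcal Q_{(r)}$ (elimination of the $x_2$-differentiations) with substitution of the ansatz $u=\varphi(\omega)$, and compare the two minimal orders through a pair of inequalities. The easy inequality is fine as you state it: transporting $\check\lambda$ and $\check L$ back through this identification exhibits an order-$\check r$ representative on the weak-singularity side, whence $\wsco_{\mathcal L}Q\leqslant\check r$.

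The gap is in the opposite inequality, which you declare ``immediate'' once the classes of multipliers are matched. In fact the two minimizations are \emph{not} over the same class, and that is precisely where an argument is needed: the representative $\tilde L$ realizing $\wsco_{\mathcal L}Q=k$ may depend explicitly on $x_2$ (only its dependence on $x_2$-derivatives of $u$ can be removed on $\mathcal Q_{(r)}$), whereas the essential order of the reduced equation is minimized over differential functions of $\varphi$ alone, i.e., $x_2$-free ones (only the multiplier $\check\lambda$ may carry the parameter $\theta=x_2$). So your transport claim --- a lower-order representative of $L|_{\mathcal Q_{(r)}}$ transports to a lower-order representative of $\check L$ --- yields, as written, only a factorization $\check L=\Lambda\cdot\bigl(\tilde L|_{u=\varphi(\omega)}\bigr)$ whose right-hand factor still involves $x_2$, and this does not yet contradict the minimality of $\check r$. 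The paper closes exactly this point by freezing the parameter: since $\check L$ is $x_2$-free, the identity $\check\lambda\check L=(\tilde\lambda\tilde L)|_{u=\varphi(\omega)}$ may be evaluated at a fixed value $x_2=x_2^0$; the specialized multiplier remains nonvanishing and $\tilde L|_{u=\varphi(\omega),\,x_2=x_2^0}$ is an $x_2$-free differential function of $\varphi$ of order at most $k$, contradicting $\check r>k$. Your closing remark that the parameter ``does not raise the order in $\varphi$'' is the right instinct, but you apply it only to $\check\lambda$; applied, via this specialization, to the transported low-order representative it is exactly the missing step.
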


\begin{proof}
We carry out a point transformation in such a way that in the new variables the operator~$Q$ has the form $Q=\p_{x_2}$. 
(For convenience, for the new variables we use the same notations as for the old ones.)
Then an ansatz constructed with~$Q$ is $u=\varphi(\omega)$, 
where $\varphi=\varphi(\omega)$ is the new unknown function and $\omega=x_1$ is the invariant independent variable. 
The manifold~$\mathcal Q_{(r)}$ is defined by the system $u_\alpha=0$, 
where $\alpha=(\alpha_1,\alpha_2)$, $\alpha_2>0$, $\alpha_1+\alpha_2\leqslant r=\ord L$.

Since $Q\in\mathcal Q(\mathcal L)$, there exist differential functions 
$\smash{\check\lambda=\check\lambda[\varphi]}$ and $\smash{\check L=\check L[\varphi]}$ 
of an order not greater than~$r$ such that $L|_{u=\varphi(\omega)}=\check\lambda\check L$
(cf.\ \cite{Zhdanov&Tsyfra&Popovych1999}). 
The function~$\check\lambda$ does not vanish and may depend on~$x_2$ as a parameter. 
The function~$\check L$ is assumed to be of minimal order~$\check r$ 
which may be attained up to the equivalence generated by nonvanishing multipliers. 
Then the reduced equation~$\check{\mathcal L}$: $\check L=0$ has essential order~$\check r$. 

The condition $\wsco_{\mathcal L}Q=k$ means that
there exists a strictly $k$th order differential function $\tilde L=\tilde L[u]$ 
and a nonvanishing differential function $\tilde\lambda=\tilde\lambda[u]$ of an order not greater than~$r$, 
which depend at most on~$x$ and derivatives of~$u$ with respect to~$x_1$,
such that $L|_{\mathcal Q_{(r)}}=\tilde\lambda\tilde L|_{\mathcal Q_{(r)}}$. 

If $\check r$ would be less than~$k$, 
we could use $\tilde\lambda_{\rm new}=\check\lambda|_{u\rightsquigarrow\varphi}$ and $\tilde L_{\rm new}=\check L|_{u\rightsquigarrow\varphi}$ 
in the definition of weak singularity and would arrive at the contradiction $\wsco_{\mathcal L}Q\leqslant\ord\tilde L_{\rm new}=\check r<k$. 
Therefore, $\check r\geqslant k$.
(Here, ``$y\rightsquigarrow z$'' means that the value $y$ should be substituted instead of the value~$z$.)

Suppose that $\check r>k$. We have the equality $\check\lambda\check L=(\tilde\lambda\tilde L)|_{u=\varphi(\omega)}$ 
in which the variable~$x_2$ plays the role of a parameter. 
Fixing a value $x_2^0$ of $x_2$, we obtain the representation
\[
\check L=\Lambda[\varphi]\,\tilde L\biggr|_{u=\varphi(\omega),\; x_2=x_2^0}, \quad 
\Lambda:=\frac{\tilde\lambda|_{u=\varphi(\omega)}}{\check\lambda}\biggr|_{x_2=x_2^0}\ne0. 
\]
Since $\ord\tilde L|_{u=\varphi(\omega),\; x_2=x_2^0}\leqslant k< \check r$, this representation contradicts 
the condition that $\check r$ is the essential order of the reduced equation~$\check{\mathcal L}$.
Therefore, $\check r=k$.
The inverse change of variables preserves the claimed property. 
\end{proof}

\begin{corollary}\label{CorollaryOnSingularRedOpsAndFamiliesOfInvSolutions2DCaseA}
Let $Q$ be a reduction operator of an equation~$\mathcal L$. 
Then the weak singularity co-order of~$Q$ for~$\mathcal L$ equals 
the maximal number of essential parameters in families of $Q$-invariant solutions of~$\mathcal L$.
\end{corollary}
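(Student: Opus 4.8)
The plan is to chain together the previous Proposition with the established bijection between reduction operators and their families of invariant solutions. Concretely, Proposition~\ref{PropositionOnSingularRedOpsAndReducedEqs2DCase} already tells us that $\wsco_{\mathcal L}Q$ equals the essential order $\check r$ of the reduced ODE $\check{\mathcal L}$. So the task reduces to showing that $\check r$ coincides with the maximal number of essential parameters occurring in a family of $Q$-invariant solutions of $\mathcal L$. I would begin exactly as in the previous proof: perform a point transformation bringing $Q$ to the form $Q=\p_{x_2}$, so that the $Q$-invariant ansatz is $u=\varphi(\omega)$ with $\omega=x_1$, and $\mathcal L$ is reduced to $\check L[\varphi]=0$, a differential equation of essential order $\check r$ in one independent variable.

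The next step is to relate the general solution of the reduced ODE $\check{\mathcal L}$ to $Q$-invariant solutions of $\mathcal L$. Any $Q$-invariant solution of $\mathcal L$ is, in the new variables, a function $u=f(x_1)$ independent of $x_2$, hence of the form $u=\varphi(\omega)$ for some solution $\varphi$ of $\check{\mathcal L}$; conversely substituting a solution of $\check{\mathcal L}$ into the ansatz yields a $Q$-invariant solution of $\mathcal L$. This gives a one-to-one correspondence between $Q$-invariant solutions of $\mathcal L$ and solutions of $\check{\mathcal L}$, which moreover is compatible with parameterized families: a family $u=f(x_1,\varkappa_1,\dots,\varkappa_p)$ of $Q$-invariant solutions corresponds precisely to a family $\varphi=\varphi(\omega,\varkappa_1,\dots,\varkappa_p)$ of solutions of $\check{\mathcal L}$, and the number of essential parameters is preserved, since essentiality (the non-degeneracy of the Jacobian of $f$, resp.\ $\varphi$, with respect to the $\varkappa$'s) is a property of the map $(\varkappa_1,\dots,\varkappa_p)\mapsto f(\cdot,\varkappa)$ that is unchanged under the substitution. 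Finally, since the inverse point transformation maps $Q$-invariant solutions of the transformed equation back to $Q$-invariant solutions of the original $\mathcal L$ bijectively and preserves parameter counts, it suffices to argue in the new variables.

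It then remains to invoke the classical fact that the essential order of an ordinary differential equation equals the maximal number of essential constants (essential parameters) in a parameterized family of its solutions: a scalar ODE of essential order $\check r$, being locally solvable for the highest derivative, has a general solution depending on exactly $\check r$ essential constants, and no family of its solutions can contain more essential parameters than that. Combining this with Proposition~\ref{PropositionOnSingularRedOpsAndReducedEqs2DCase} yields $\wsco_{\mathcal L}Q=\check r=$ (maximal number of essential parameters in families of $Q$-invariant solutions of $\mathcal L$), as claimed.

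The main obstacle I anticipate is the bookkeeping around the word ``essential'': one must be careful that a family of $Q$-invariant solutions with $p$ essential parameters really does descend to a family of solutions of $\check{\mathcal L}$ with $p$ essential parameters (no parameters become redundant or get absorbed in the reduction), and conversely that the general solution of $\check{\mathcal L}$ genuinely realizes the maximum $\check r$. Both directions hinge on the fact that $\check L$ is taken of \emph{minimal} order $\check r$ up to nonvanishing multipliers, which is what prevents spurious lowering of the parameter count, and on local solvability of $\check{\mathcal L}$ for its top-order derivative; these points should be made explicit rather than glossed over.
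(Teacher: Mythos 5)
Your proposal is correct and follows essentially the same route as the paper's proof: invoke Proposition~\ref{PropositionOnSingularRedOpsAndReducedEqs2DCase} to identify $\wsco_{\mathcal L}Q$ with the essential order $\check r$ of the reduced ODE, note that the maximal number of essential parameters in solution families of $\check{\mathcal L}$ equals $\check r$, and transfer this through the ansatz, which yields all $Q$-invariant solutions. Your additional care about essentiality of parameters being preserved under the ansatz substitution and the point transformation is a sound elaboration of steps the paper states more briefly.
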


\begin{proof}
The essential order $\check r$ of the reduced ordinary differential equation~$\check{\mathcal L}$ associated with~$Q$ 
coincides with the weak singularity co-order of~$Q$ for~$\mathcal L$. 
The maximal number of essential parameters in solutions of~$\check{\mathcal L}$ equals the order of~$\check{\mathcal L}$. 
The substitution of these solutions into the corresponding ansatz leads to parametric families of $Q$-invariant solutions of~$\mathcal L$, 
and all $Q$-invariant solutions of~$\mathcal L$ are obtained in this way. 
Therefore, the maximal number of essential parameters in families of $Q$-invariant solutions of~$\mathcal L$ equals $\check r$.
\end{proof}

\begin{corollary}\label{CorollaryOnSingularRedOpsAndFamiliesOfInvSolutions2DCaseB}
Let $Q$ be a $k$th co-order weakly singular reduction operator of an equation~$\mathcal L$. 
Suppose additionally that a differential function of minimal order, 
associated with~$L$ on the manifold~$\mathcal Q_{(r)}$ up to a nonvanishing multiplier, 
is of maximal rank in the derivative of~$u$ of the highest order~$k$ appearing in this differential function. 
Then $\mathcal L$ possesses a $k$-parametric family of $Q$-invariant solutions, 
and any $Q$-invariant solution of~$\mathcal L$ belongs to this family. 
\end{corollary}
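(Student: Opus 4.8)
The plan is to combine Proposition~\ref{PropositionOnSingularRedOpsAndReducedEqs2DCase} and Corollary~\ref{CorollaryOnSingularRedOpsAndFamiliesOfInvSolutions2DCaseA} with the classical existence-and-uniqueness theorem for ordinary differential equations in normal form. As in the proof of Proposition~\ref{PropositionOnSingularRedOpsAndReducedEqs2DCase}, I would first carry out a point transformation reducing~$Q$ to the form $Q=\p_{x_2}$, so that the associated ansatz is $u=\varphi(\omega)$ with $\omega=x_1$ and the manifold~$\mathcal Q_{(r)}$ is $\{u_\alpha=0\colon\alpha_2>0\}$. Under this transformation the hypothesis is preserved: a minimal-order differential function~$\breve L$ associated with~$L$ on~$\mathcal Q_{(r)}$ up to a nonvanishing multiplier depends only on~$x$ and the derivatives $u_{(0,0)},\dots,u_{(k,0)}$, has order exactly~$k$ (which, by Proposition~\ref{PropositionOnSingularRedOpsAndReducedEqs2DCase}, equals $\wsco_{\mathcal L}Q$ and the essential order~$\check r$ of the reduced equation~$\check{\mathcal L}$), and satisfies $\breve L_{u_{(k,0)}}\ne0$ on the solution set $\{\breve L=0\}$.

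Next I would identify the reduced equation. Restricting the relation $L|_{\mathcal Q_{(r)}}=\mu\,\breve L|_{\mathcal Q_{(r)}}$ (with $\mu\ne0$) to the ansatz and comparing with the reduction relation $L|_{u=\varphi(\omega)}=\check\lambda\check L$ recalled in Section~\ref{SectionOnDefOfRedOps}, one gets $\check\lambda\check L=\mu|_{u=\varphi(\omega)}\,\breve L|_{u=\varphi(\omega)}$; since $\check L$ has order exactly $\check r=k$ and $\breve L|_{u=\varphi(\omega)}$ has order at most~$k$, after fixing a value of the suppressed variable~$x_2$ the functions $\check L$ and $\breve L|_{u=\varphi(\omega)}$ differ only by a nonvanishing factor. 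Hence the reduced equation of essential order~$k$ is equivalent to
\[
\breve L\bigl(\omega,\varphi,\varphi',\dots,\varphi^{(k)}\bigr)=0 ,
\]
and the maximal-rank hypothesis says precisely that the partial derivative of the left-hand side with respect to~$\varphi^{(k)}$ does not vanish on its solution set. By the implicit function theorem the reduced equation can therefore be rewritten locally in normal form $\varphi^{(k)}=G\bigl(\omega,\varphi,\varphi',\dots,\varphi^{(k-1)}\bigr)$ with smooth right-hand side~$G$.

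Now I would invoke the Picard--Lindel\"of theorem: through every admissible tuple $(\omega_0,c_0,\dots,c_{k-1})$ there passes a unique solution of the normal-form equation with $\varphi^{(j)}(\omega_0)=c_j$ for $j=0,\dots,k-1$. Thus the general solution is a $k$-parametric family $\varphi=\varphi(\omega;c_0,\dots,c_{k-1})$, the parameters being essential since they are recovered from the initial data, and every solution of the reduced equation belongs to this family. Substituting this family into the ansatz and reverting via the inverse point transformation produces a $k$-parametric family of $Q$-invariant solutions of~$\mathcal L$; by the equivalence between conditional invariance and reduction~\cite{Zhdanov&Tsyfra&Popovych1999}, every $Q$-invariant solution of~$\mathcal L$ arises by substituting a solution of the reduced equation into the ansatz, hence belongs to this family.

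The step I expect to require the most care is the second one: translating the condition ``maximal rank in~$u_{(k,0)}$'' imposed on the associated function~$\breve L$ on the jet manifold~$\mathcal Q_{(r)}$ into the statement that the reduced ordinary differential equation is solvable for its highest-order derivative. One must keep track of the facts that $\breve L$ is determined only up to a nonvanishing multiplier, that the multiplier~$\check\lambda$ in the reduction relation may depend on the suppressed variable~$x_2$ as a parameter, and that the essential order of~$\check{\mathcal L}$ is exactly~$k$, so that no spurious lowering of the order can slip in. Once this identification is secured, the remaining arguments are either direct quotations of Proposition~\ref{PropositionOnSingularRedOpsAndReducedEqs2DCase} and Corollary~\ref{CorollaryOnSingularRedOpsAndFamiliesOfInvSolutions2DCaseA} or standard ordinary differential equation theory.
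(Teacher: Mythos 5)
Your argument is correct and follows essentially the same route as the paper's own (much terser) proof: the maximal-rank hypothesis lets the reduced ODE be written in normal form, its general solution is a $k$-parametric family containing all solutions, and substitution into the ansatz yields exactly the $Q$-invariant solutions of~$\mathcal L$. The extra care you take in identifying the reduced equation with the restricted associated function (fixing $x_2$ and absorbing nonvanishing multipliers) is precisely the bookkeeping already implicit in the proof of Proposition~\ref{PropositionOnSingularRedOpsAndReducedEqs2DCase}, so your proposal is a fleshed-out version of the paper's argument rather than a different one.
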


\begin{proof}
Under this assumption, the reduced ordinary differential equation~$\check{\mathcal L}$ associated with~$Q$ can be written in normal form and hence 
has a $k$-parametric general solution which contains all solutions of~$\check{\mathcal L}$.   
Substituting it into the corresponding ansatz, this solution gives a $k$-parametric family of $Q$-invariant solutions of~$\mathcal L$. 
There are no other $Q$-invariant solutions of~$\mathcal L$.
\end{proof}

\begin{corollary}\label{CorollaryOnSingularRedOpsAndFamiliesOfInvSolutions2DCaseC}
Suppose that a differential function of minimal order, 
associated with~$L$ on the manifold~$\mathcal Q_{(r)}$ up to a nonvanishing multiplier, 
is of maximal rank in the highest order derivative of~$u$ appearing in this differential function. 
If the maximal number of essential parameters in families of $Q$-invariant solutions of~$\mathcal L$ 
is not less than the weak singularity co-order of~$Q$ for~$\mathcal L$ 
then $Q$ is a reduction operator of~$\mathcal L$. 
\end{corollary}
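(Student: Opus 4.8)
The plan is to establish the converse direction of the equivalence between reduction operators and parametric families of invariant solutions, under the maximal-rank hypothesis. Suppose that $\wsco_{\mathcal L}Q=k$ and that $\mathcal L$ admits a family of $Q$-invariant solutions with at least $k$ essential parameters. I would begin exactly as in the proof of Proposition~\ref{PropositionOnSingularRedOpsAndReducedEqs2DCase}: perform a point transformation bringing $Q$ to the form $\p_{x_2}$, so that the associated ansatz is $u=\varphi(\omega)$ with $\omega=x_1$, and the manifold $\mathcal Q_{(r)}$ is cut out by $u_\alpha=0$ for $\alpha_2>0$. Under the maximal-rank assumption, the differential function $\tilde L=\tilde L[u]$ of minimal order~$k$ with $L|_{\mathcal Q_{(r)}}=\tilde\lambda\,\tilde L|_{\mathcal Q_{(r)}}$ is, after substitution $u=\varphi(\omega)$, an honest $k$th order ODE $\check L[\varphi]=0$ that can be written in normal form, hence possesses a $k$-parametric general solution containing all of its solutions.

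The key step is then to argue that this normal-form ODE is precisely the reduction of $\mathcal L$ by the ansatz associated with~$Q$, so that $Q$ is a reduction operator by the characterization of \cite{Zhdanov&Tsyfra&Popovych1999}. For this I would take the hypothesized family $\mathcal F=\{u=f(x,\varkappa_1,\dots,\varkappa_m)\}$ of $Q$-invariant solutions with $m\geqslant k$ essential parameters. Since each member is $Q$-invariant, $f$ does not depend on $x_2$ in the chosen coordinates, i.e.\ $f=f(x_1,\varkappa)$, and substituting into $\mathcal L$ shows that $\varphi=f(\cdot,\varkappa)$ solves $\check L[\varphi]=0$ for every value of~$\varkappa$. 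Thus the $k$th order ODE $\check L=0$ has a solution family with at least $k$ essential parameters; since a $k$th order ODE in normal form has at most a $k$-parametric general solution, $m=k$ and this family is the general solution. In particular the ansatz $u=\varphi(\omega)$ does reduce $\mathcal L$ to the consistent (non-degenerate) ODE $\check L=0$ — there is no obstruction forcing an identity or an algebraic relation — and hence $Q\in\mathfrak Q(\mathcal L)$.

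The main obstacle I anticipate is the bookkeeping needed to see that the substitution $L|_{u=\varphi(\omega)}$ genuinely produces the same $\check L$ that controls weak singularity, i.e.\ that restricting to $\mathcal Q_{(r)}$ and then setting $u=\varphi(\omega)$ is compatible with the minimal-order representative, and that the nonvanishing multipliers $\tilde\lambda$, $\check\lambda$ can be matched (possibly after fixing a value $x_2^0$, as in Proposition~\ref{PropositionOnSingularRedOpsAndReducedEqs2DCase}). Once that identification is in place, the maximal-rank hypothesis supplies the normal form, the $Q$-invariant family supplies enough parameters to force the ODE to be genuinely of order~$k$ with $k$-parametric general solution, and the reduction criterion of \cite{Zhdanov&Tsyfra&Popovych1999} yields $Q\in\mathfrak Q(\mathcal L)$, completing the proof. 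Finally, invoking the inverse point transformation restores the original coordinates, which preserves the property of being a reduction operator by Lemma~\ref{LemmaOnInducedMapping}.
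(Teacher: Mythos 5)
There is a genuine gap at the central point of your argument. After transforming $Q$ to $\p_{x_2}$, the minimal-order representative $\tilde L$ with $L|_{\mathcal Q_{(r)}}=\tilde\lambda\,\tilde L|_{\mathcal Q_{(r)}}$ in general still depends on $x_2$; consequently $\hat L[\varphi]:=\tilde L|_{u=\varphi(\omega)}$ is not ``an honest $k$th order ODE'' but a family of $k$th order ODEs in which $x_2$ enters as a parameter. Being a reduction operator means precisely that $L|_{u=\varphi(\omega)}$ factors as a nonvanishing multiplier times a differential function of $\varphi$ that is free of $x_2$, so declaring the restricted $\tilde L$ to be an ODE, and later asserting that ``the ansatz does reduce $\mathcal L$ to the consistent ODE $\check L=0$ --- there is no obstruction'', is assuming exactly what has to be proved. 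The maximal-rank hypothesis only yields a normal form $\varphi^{(k)}=R[\varphi]$ in which $R$ may still involve $x_2$; if it does, a $Q$-invariant solution must satisfy this equation for all values of $x_2$ simultaneously, and the set of such solutions is cut out, after splitting with respect to $x_2$, by an ODE of order strictly less than $k$ --- a case your proof never excludes. Your anticipated remedy of ``fixing a value $x_2^0$'' (borrowed from the proof of Proposition~\ref{PropositionOnSingularRedOpsAndReducedEqs2DCase}, where it serves the opposite implication) does not help, because there one only needs the equation at a single $x_2$, whereas here the whole point is the simultaneous validity for all $x_2$.

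This is also the only place where the hypothesis on the number of essential parameters is actually needed, and your write-up uses it for something else (to conclude $m=k$ and call the invariant family ``the general solution''), which does not rule out the degenerate case. The paper's argument is: if $R_{x_2}\ne0$, splitting $\hat L=0$ with respect to $x_2$ gives an ODE $\tilde R[\varphi]=0$ of order lower than $k$ satisfied by every $Q$-invariant solution of $\mathcal L$, so no family of $Q$-invariant solutions can have $k$ essential parameters, contradicting the assumption; hence $R_{x_2}=0$, the equation $\varphi^{(k)}=R[\varphi]$ is the reduced equation obtained by substituting the ansatz $u=\varphi(\omega)$, and $Q\in\mathfrak Q(\mathcal L)$ by the criterion of \cite{Zhdanov&Tsyfra&Popovych1999}. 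Once this splitting step is inserted, the remaining elements of your outline (normal form from maximal rank, the reduction criterion, and Lemma~\ref{LemmaOnInducedMapping} to return to the original variables) are sound.
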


\begin{proof}
Point transformations of the variables do not change the claimed property. 
We use the variables and notations from the proof of Proposition~\ref{PropositionOnSingularRedOpsAndReducedEqs2DCase}. 
Consider the differential function $\hat L[\varphi]=\tilde L|_{u=\varphi(\omega)}$. 
It depends on~$x_2$ as a parameter and $\ord \hat L=k$. 
Due to the condition of maximal rank, we can resolve the equation $\hat L=0$ with respect to the highest order derivative $\varphi^{(k)}$: 
$\varphi^{(k)}=R[\varphi]$, where  $\ord R<k$. 

If $R_{x_2}\ne0$, splitting with respect to~$x_2$ in the equation $\hat L=0$ results in 
an ordinary differential equation $\tilde R[\varphi]=0$ of an order lower than~$k$. 
Any $Q$-invariant solution of~$\mathcal L$ has the form $u=\varphi(\omega)$, where the function~$\varphi$ satisfies, in particular, 
the equation $\tilde R[\varphi]=0$. 
This contradicts the condition that 
the maximal number of essential parameters in families of $Q$-invariant solutions of~$\mathcal L$ is not less than~$k$. 

Therefore,  $R_{x_2}=0$, i.e., 
the equation $\varphi^{(k)}=R[\varphi]$ is a reduced equation which is obtained from~$\mathcal L$ by the substitution 
of the ansatz $u=\varphi(\omega)$ constructed with the operator~$Q=\p_2$.
\end{proof}

\begin{note}\label{NoteOnParameterNumberAndWeakSingularityCoOrder}
For any operator~$Q$, 
the maximal number of essential parameters in families of $Q$-invariant solutions of~$\mathcal L$ 
cannot be greater than $\wsco_L Q$.
\end{note}

Summing up the above consideration, we can formulate the following statement. 

\begin{proposition}\label{PropositionOnSingularRedOpsAndFamiliesOfInvSolutions2DCase}
Suppose that a differential function of minimal order, 
associated with the differential function~$L[u]$ on the manifold~$\mathcal Q_{(r)}$ ($r=\ord L$) up to a nonvanishing multiplier, 
is of maximal rank in the highest order derivative of~$u$ appearing in this differential function. 
Then any two of the following properties imply the third one.

1)  $Q$ is a reduction operator of the equation~$\mathcal L$: $L=0$.

2) The weak singularity co-order of~$Q$ for~$\mathcal L$ equals~$k$ ($0\leqslant k\leqslant r$).

3) The equation~$\mathcal L$ possesses a $k$-parametric family of $Q$-invariant solutions, 
and any $Q$-invariant solution of~$\mathcal L$ belongs to this family.
\end{proposition}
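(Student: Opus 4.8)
The plan is to deduce the proposition by assembling the three preceding results; each of the three implications ``two of the properties imply the third'' reduces to exactly one of Proposition~\ref{PropositionOnSingularRedOpsAndReducedEqs2DCase} and Corollaries~\ref{CorollaryOnSingularRedOpsAndFamiliesOfInvSolutions2DCaseA}--\ref{CorollaryOnSingularRedOpsAndFamiliesOfInvSolutions2DCaseC}, so that essentially only bookkeeping remains.

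First I would treat the implication (1)\,\&\,(2)\,$\Rightarrow$\,(3). Since $Q$ is a reduction operator, Proposition~\ref{PropositionOnSingularRedOpsAndReducedEqs2DCase} identifies $\wsco_{\mathcal L}Q$ with the essential order $\check r$ of the reduced ordinary differential equation $\check{\mathcal L}$ associated with $Q$; hence $\check r=k$ by (2). The maximal-rank assumption of the proposition is precisely the additional hypothesis of Corollary~\ref{CorollaryOnSingularRedOpsAndFamiliesOfInvSolutions2DCaseB}, which therefore yields a $k$-parametric family of $Q$-invariant solutions of $\mathcal L$ containing all such solutions, i.e.\ property (3).

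Next, for (2)\,\&\,(3)\,$\Rightarrow$\,(1): by (3) the equation $\mathcal L$ admits a family of $Q$-invariant solutions depending on $k$ essential parameters, so the maximal number of essential parameters in families of $Q$-invariant solutions is at least $k$, which by (2) equals $\wsco_{\mathcal L}Q$. Corollary~\ref{CorollaryOnSingularRedOpsAndFamiliesOfInvSolutions2DCaseC}, again under the maximal-rank assumption, then forces $Q$ to be a reduction operator of $\mathcal L$. Finally, for (1)\,\&\,(3)\,$\Rightarrow$\,(2): by (1) and Corollary~\ref{CorollaryOnSingularRedOpsAndFamiliesOfInvSolutions2DCaseA}, $\wsco_{\mathcal L}Q$ equals the maximal number of essential parameters in families of $Q$-invariant solutions of $\mathcal L$; by (1), as in the proof of that corollary, all $Q$-invariant solutions are obtained by substituting the general solution of $\check{\mathcal L}$ into the corresponding ansatz and thus form a single family with $\wsco_{\mathcal L}Q$ essential parameters. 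Comparing this family with the $k$-parametric family supplied by (3), which consists of the same functions, the essentiality of the parameters gives $k=\wsco_{\mathcal L}Q$.

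I expect the only non-mechanical point to be this last identification: one has to argue that two families each containing all $Q$-invariant solutions coincide as sets of functions and hence carry the same number of essential parameters (Note~\ref{NoteOnParameterNumberAndWeakSingularityCoOrder} can be invoked for the complementary bound $k\leqslant\wsco_{\mathcal L}Q$). Everything else is a direct appeal to the cited statements, applied in the order above.
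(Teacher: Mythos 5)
Your assembly is exactly what the paper intends: Proposition~\ref{PropositionOnSingularRedOpsAndFamiliesOfInvSolutions2DCase} is stated only as a summary (``Summing up the above consideration\dots''), its implicit proof being precisely the combination of Proposition~\ref{PropositionOnSingularRedOpsAndReducedEqs2DCase}, Corollaries~\ref{CorollaryOnSingularRedOpsAndFamiliesOfInvSolutions2DCaseA}--\ref{CorollaryOnSingularRedOpsAndFamiliesOfInvSolutions2DCaseC} and Note~\ref{NoteOnParameterNumberAndWeakSingularityCoOrder} that you carry out, including the bookkeeping for the implication (1)\,\&\,(3)\,$\Rightarrow$\,(2). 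Your proof is correct and follows essentially the same route as the paper.
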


The properties of ultra-singular vector fields as reduction operators are obvious. 

\begin{proposition}\label{PropositionOnUltraSingularRedOpsAndFamiliesOfInvSolutions2DCase}
1) Any ultra-singular vector field~$Q$ of a differential equation~$\mathcal L$ is a reduction operator of this equation. 
An ansatz constructed with~$Q$ reduces~$\mathcal L$ to the identity. 
Therefore, the family of $Q$-invariant solutions of~$\mathcal L$ is parameterized by an arbitrary function 
of a single $Q$-invariant variable.

2) If the family of $Q$-invariant solutions of~$\mathcal L$ is parameterized by an arbitrary function 
of a single $Q$-invariant variable then $Q$ is an ultra-singular vector field for~$\mathcal L$.
\end{proposition}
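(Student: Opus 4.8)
The plan is to prove the two parts separately, both by direct computation in adapted coordinates, using the characterization of ultra-singularity via the associated differential function from Section~\ref{SectionOnSingularVectorFieldsOfDiffFunctionsIn2DCase}. First I would perform a point transformation bringing~$Q$ to the canonical form $Q=\p_{x_2}$, so that the invariant surface condition becomes $u_2=0$, the manifold~$\mathcal Q_{(r)}$ is cut out by $u_\alpha=0$ for all multi-indices~$\alpha$ with $\alpha_2>0$, $|\alpha|\le r$, and an ansatz constructed with~$Q$ is simply $u=\varphi(\omega)$ with $\omega=x_1$. All three properties in the statement are invariant under point transformations, so this entails no loss of generality; I would keep the old notation for the new variables, as in the proof of Proposition~\ref{PropositionOnSingularRedOpsAndReducedEqs2DCase}.

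For part~1, the hypothesis is that $Q$ is ultra-singular for~$\mathcal L$: $L[u]=0$, i.e.\ $L|_{\mathcal Q_{(r)}}\equiv0$. In the canonical coordinates this says precisely that $L$ vanishes identically once every derivative of~$u$ involving a differentiation with respect to~$x_2$ is set to zero; equivalently, the associated differential function $\hat L[u]$, obtained from~$L$ by that substitution, is identically zero. Substituting the ansatz $u=\varphi(\omega)$ into~$L$ amounts to exactly this substitution (since $u=\varphi(x_1)$ has $u_\alpha=0$ whenever $\alpha_2>0$, and $u_{(\kappa,0)}=\varphi^{(\kappa)}$), so $L|_{u=\varphi(\omega)}=\hat L|_{u=\varphi(\omega)}\equiv0$. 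Hence the reduced equation is the identity $0=0$; by the reduction criterion of~\cite{Zhdanov&Tsyfra&Popovych1999} recalled in Section~\ref{SectionOnDefOfRedOps}, $Q$ is a reduction operator. Moreover, since the reduced equation imposes no condition on~$\varphi$, the $Q$-invariant solutions are $u=\varphi(x_1)$ with $\varphi$ an arbitrary smooth function of the single invariant variable; transforming back gives the stated parameterization by an arbitrary function of one $Q$-invariant variable.

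For part~2, I would argue by contraposition: if $Q$ is \emph{not} ultra-singular for~$\mathcal L$, then $\hat L\not\equiv0$, so $\hat L$ is a genuinely nontrivial differential function of $x$ and the derivatives $\varphi^{(\kappa)}=u_{(\kappa,0)}$, possibly still depending on~$x_2$ as a parameter. Substituting the ansatz, a $Q$-invariant solution $u=\varphi(x_1)$ must satisfy $\hat L[\varphi]=0$ for all values of~$x_2$; splitting with respect to~$x_2$ (when $\hat L$ depends on it) yields at least one nontrivial ordinary differential equation, or a nontrivial finite-order relation, on~$\varphi$. In either case the set of admissible~$\varphi$ cannot contain an arbitrary function of~$\omega$: a nontrivial differential constraint of finite order cuts down the solution space to one depending on finitely many constants (or is outright inconsistent), and a nontrivial algebraic relation among $x$ and finitely many $\varphi^{(\kappa)}$ likewise forbids arbitrary~$\varphi$. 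This contradicts the assumption that the family of $Q$-invariant solutions is parameterized by an arbitrary function of a single $Q$-invariant variable, so $Q$ must be ultra-singular.

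The main obstacle is the bookkeeping in part~2: one must be careful that $\hat L\not\equiv0$ really does force a nonvacuous constraint on~$\varphi$ after the substitution $u=\varphi(\omega)$ and the splitting with respect to~$x_2$ — in particular one should rule out the degenerate possibility that $\hat L$, while not identically zero as a function on the jet space, vanishes identically along every curve $u=\varphi(x_1)$, which cannot happen since those curves sweep out all of the relevant jet coordinates. Apart from this point the argument is routine, and it is essentially the converse direction of the reasoning already used in Corollary~\ref{CorollaryOnSingularRedOpsAndFamiliesOfInvSolutions2DCaseC}.
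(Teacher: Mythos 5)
Your proof is correct. The paper in fact offers no proof of this proposition---it simply declares the properties ``obvious''---and your canonical-coordinate argument (bringing $Q$ to $\p_{x_2}$ so that substituting the ansatz $u=\varphi(x_1)$ is literally the restriction of $L$ to $\mathcal Q_{(r)}$, and noting for part~2 that jets of curves $u=\varphi(x_1)$ sweep out all the coordinates $x,u,u_{(\kappa,0)}$, so $\hat L\not\equiv0$ yields a genuine constraint on~$\varphi$) is precisely the routine verification the paper has in mind, in the same style as the proofs of Proposition~\ref{PropositionOnSingularRedOpsAndReducedEqs2DCase} and its corollaries.
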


\section{Reduction operators of singularity co-order~1}\label{SectionOnReductionOperatorsOfSingularityCoOrder1}

Encouraged by the above investigation of evolution and, especially, wave equations, 
we study co-order one singular reduction operators of general partial differential equations in 
two independent and one dependent variables. 

Consider an equation~$\mathcal L$: $L=0$, where $L=L[u]$ is a differential function of order $r>1$.
Suppose that the function~$L$ admits a first co-order singular module of vector fields. 
(In view of Corollary~\ref{CorollaryUselessnessOfWeaklySingularFamiliesOfVectorFields}, 
we can restrict ourselves to considering only strong singularity of vector fields for differential equations.)
Without loss of generality, up to changing variables we can assume that the module contains 
a first co-order singular set $S=\{Q^\zeta\}$ of vector fields in reduced form, i.e., 
$Q^\zeta=\xi\p_1+\p_2+\zeta\p_u$ for any smooth function~$\zeta$ of $(x,u)$ and a fixed smooth function~$\xi$.
Additionally, we can assume $\xi\in\{0,u\}$.

By Theorem~\ref{TheoremOn2DSingularVectorFieldsOfCoOrderK}, 
the differential function~$L$ can be written in the form $L=\check L(x,\Omega_{r,1})$,
where 
\[\Omega_{r,1}=\bigl(\omega_\alpha=D_1^{\alpha_1}(\xi D_1+D_2)^{\alpha_2}u,\alpha_1\leqslant 1,\alpha_1+\alpha_2\leqslant r\bigr),\] 
and $\check L_{\omega_\alpha}\ne0$ for some $\omega_\alpha$ with $\alpha_1=1$.
Then the restriction of~$L$ to $\smash{\mathcal Q^\zeta_{(r)}}$ coincides with 
the restriction, to the same manifold $\smash{\mathcal Q^\zeta_{(r)}}$, 
of the function $\tilde L^\zeta=\check L(x,\tilde\Omega_{r,1})$, where 
\[\tilde\Omega_{r,1}=\bigl(D_1^{\alpha_1}(Q^\zeta)^{\alpha_2}u,\alpha_1\leqslant 1,\alpha_1+\alpha_2\leqslant r\bigr).\] 
Thus, the form of $\tilde L^\zeta$ is determined by the forms of $L$ and $\xi$ and a chosen value of the parameter-function~$\zeta$.
Depending on the value of~$\zeta$, the differential function~$\tilde L^\zeta$ may either  identically vanish or be of order~0 or~1. 
This means that either the vector field $Q^\zeta$ is ultra-singular or $\sco_L Q^\zeta=0$ or $\sco_L Q^\zeta=1$, respectively. 
We investigate each of the above cases separately. 
Below we additionally suppose that the function $\tilde L^\zeta$ is of maximal rank with respect to $u$ (resp. $u_1$) 
if $\sco_L Q^\zeta=0$ (resp. $\sco_L Q^\zeta=1$). 

The values of~$\zeta$ for which $Q^\zeta$ for~$\mathcal L$ is ultra-singular are singled out by the condition $\tilde L^\zeta=0$, 
where $u$ and $u_1$ are considered as independent variables. 
Splitting this condition with respect to~$u_1$ gives a system $\mathcal S_{-1}$ of partial differential equations in~$\zeta$ 
of orders less than~$r$, which may be incompatible in the general case. 
The incompatibility of this system means that the set~$S$ contains no ultra-singular vector fields. 
For example, evolution equations of orders greater than 1 
and nonlinear wave equations of the form~\eqref{EqSimpleNWEs} with $F_u\ne0$,  
in contrast to equations of the form~\eqref{EqSimpleNWEs} with $F_u=0$, have no ultra-singular vector fields, 
see~Sections~\ref{SectionOnExampleOfEvolEqs} and~\ref{SectionOnExampleOfSimpleNWEs}. 
$\zeta$ satisfying the ultra-singularity condition guarantees that $Q^\zeta\in\mathfrak Q(\mathcal L)$ 
and the family of $Q^\zeta$-invariant solutions of~$\mathcal L$ is parameterized by an arbitrary function of a single $Q^\zeta$-invariant variable.

If $\sco_L Q^\zeta=0$, the parameter-function~$\zeta$ satisfies the condition $\tilde L^\zeta_{u_1}=0$ 
with $u$ and $u_1$ viewed as independent variables, which is weaker than the ultra-singularity condition. 
Therefore, the corresponding system~$\mathcal S_0$ of partial differential equations in~$\zeta$ of orders less than~$r$, 
obtained by splitting the zero co-order singularity condition with respect to~$u_1$, 
has more chances of being compatible than~$\mathcal S_{-1}$. 
Thus, any nonlinear wave equation of the form~\eqref{EqSimpleNWEs} with $F_u\ne0$ admits 
zeroth co-order singular vector fields although this is not the case for ultra-singular vector fields.
At the same time, evolution equations do not possess zeroth co-order singular vector fields. 

Certain conditions which are sufficient for the compatibility of~$\mathcal S_0$ can be formulated.
Thus, if $\check L_{\omega_{(1,0)}}=0$ and $\xi_u=0$ then the system $\mathcal S_0$ is compatible 
since it is satisfied by any~$\zeta$ with $\zeta_u=0$. 
In other words, $\sco_L Q^\zeta\leqslant0$ for any $\zeta=\zeta(x)$. 
Let us consider this particular case in more detail. 
(Recall that under the condition $\xi_u=0$ the coefficient~$\xi$ can be assumed, up to point transformations, to equal~0 
but we will not use this possibility.)

If additionally $\check L_{\omega_{(0,0)}}=0$, 
the condition $\tilde L^\zeta=0$ under the assumption $\zeta=\zeta(x)$
implies only a single partial differential equation with respect to~$\zeta$. 
Any of its solutions is a solution of $\mathcal S_{-1}$ and hence 
the corresponding vector field $Q^\zeta$ is ultra-singular for~$L$. 

Otherwise $\sco_L Q^\zeta=0$ and we can resolve the equation $\tilde L^\zeta=0$ with respect to~$u$: 
$u=G^\zeta(x)$, where the expression for the function $G^\zeta$ depends on the parameter-function~$\zeta=\zeta(x)$ 
and its derivatives up to order~$r-1$. 
Then the conditional invariance criterion is equivalent to 
the $r$th order partial differential equation $\zeta=\xi G^\zeta_1+ G^\zeta_2$ with respect to $\zeta$.
If $\zeta$ is a solution of this equation then $Q^\zeta$ is a reduction operator of~$\mathcal L$. 
The ansatz constructed with the operator $Q^\zeta$ can be taken in the form $u=\varphi(\omega)+G^\zeta(x)$, 
where $\varphi=\varphi(\omega)$ is the new unknown function and 
$\omega=\omega(x)$ is the invariant independent variable satisfying the equation $\xi\omega_1+\omega_2=0$. 
It reduces the initial equation~$\mathcal L$ to a trivial algebraic equation $\varphi=0$, 
i.e., the function $u=G^\zeta(x)$ is a unique $Q^\zeta$-invariant solution of~$\mathcal L$.
Conversely, let us fix a solution $u=f(x)$ of the equation~$\mathcal L$ and set $\zeta=\xi f_1+ f_2$. 
Then $f=G^\zeta(x)$ and hence $\zeta=\xi G^\zeta_1+ G^\zeta_2$, 
i.e., in view of the conditional invariance criterion $Q^\zeta=\xi\p_1+\p_2+\zeta\p_u$ is a reduction operator of~$\mathcal L$, 
and $\zeta_u=0$. The solution $u=f(x)$ is invariant with respect to~$Q^\zeta$ by construction. 
Thus we obtain:

\begin{theorem}\label{TheoremOnSolutionsAndZeroOrderSingularRedOpsForGen2DPDEs}
Suppose that an equation~$\mathcal L$: $L=0$ possesses a first co-order singular set 
$S=\{Q^\zeta\}$ of vector fields in reduced form $Q^\zeta=\xi\p_1+\p_2+\zeta\p_u$ with $\xi_u=0$, i.e., 
its right hand side~$L$ is represented in the form $L=\check L(x,\Omega_{r,1})$,
where \[\Omega_{r,1}=\bigl(\omega_\alpha=D_1^{\alpha_1}(\xi D_1+D_2)^{\alpha_2}u,\alpha_1\leqslant 1,\alpha_1+\alpha_2\leqslant r\bigr),\] 
$\check L_{\omega_\alpha}\ne0$ for some $\alpha$ with $\alpha_1=1$, 
and additionally $\check L_{\omega_{(1,0)}}=0$ and $\check L_{\omega_{(0,0)}}\ne0$. 
Then there exists a one-to-one correspondence between solutions of~$\mathcal L$  
and reduction operators from~$S$ with $\zeta_u=0$.
Namely, any such operator is of singularity co-order~$0$ and 
corresponds to the unique solution which is invariant with respect to this operator. 
The problems of solving the equation~$\mathcal L$
and the exhaustive description of its reduction operators of the above form are completely equivalent.
\end{theorem}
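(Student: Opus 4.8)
The plan is to present the theorem as the organized upshot of the $\sco_L Q^\zeta=0$ analysis already carried out just above its statement, making the two directions of the correspondence and the bijectivity precise. First I would record the structural consequences of the hypotheses. By Theorem~\ref{TheoremOn2DSingularVectorFieldsOfCoOrderK} the left hand side is $L=\check L(x,\Omega_{r,1})$; for $\zeta=\zeta(x)$ and $\xi_u=0$ the restriction to $\mathcal Q^\zeta_{(r)}$ of each coordinate $\omega_\alpha=D_1^{\alpha_1}(Q^\zeta)^{\alpha_2}u$ depends on $(x,u)$ only, and in fact only on $x$ unless $(\alpha_1,\alpha_2)=(0,0)$. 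Combined with $\check L_{\omega_{(1,0)}}=0$ this shows that $\tilde L^\zeta=\check L(x,\tilde\Omega_{r,1})$ has order $0$; the assumption $\check L_{\omega_{(0,0)}}\ne0$ then gives it maximal rank in $u$, so locally $\tilde L^\zeta=0\Leftrightarrow u=G^\zeta(x)$ for a function $G^\zeta$ built from $\zeta$ and its $x$-derivatives up to order $r-1$. Hence $\sco_L Q^\zeta=0$ for every $\zeta=\zeta(x)$ in this situation.

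Next I would reduce the conditional invariance criterion. Since $\tilde L^\zeta|_{\mathcal Q^\zeta_{(r)}}=L|_{\mathcal Q^\zeta_{(r)}}$ with $\ord\tilde L^\zeta=0$, Lemma~\ref{LemmaOnReformulationOfCondInvCriterion} says $Q^\zeta\in\mathfrak Q(\mathcal L)$ if and only if $Q^\zeta$ applied to $u-G^\zeta(x)$ vanishes on the manifold $u=G^\zeta(x)$, i.e. if and only if $\zeta=\xi G^\zeta_1+G^\zeta_2$ — a single $r$th order PDE for $\zeta$. For the forward direction I would take any solution $\zeta$ of this PDE, choose $\omega=\omega(x)$ with $\xi\omega_1+\omega_2=0$, and check that the ansatz $u=\varphi(\omega)+G^\zeta(x)$ is consistent with the invariant surface condition (the $\varphi'$-terms cancel through $\xi\omega_1+\omega_2=0$ and the constant term matches through the determining PDE) and reduces $\mathcal L$ to $\varphi=0$; hence $Q^\zeta\in\mathfrak Q(\mathcal L)$, it is of singularity co-order $0$, and by Proposition~\ref{PropositionOnSingularRedOpsAndFamiliesOfInvSolutions2DCase} (whose maximal-rank hypothesis holds here by assumption) its family of invariant solutions consists of the single function $u=G^\zeta(x)$.

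For the converse I would start from a solution $u=f(x)$ of $\mathcal L$ and set $\zeta:=\xi f_1+f_2$, which has $\zeta_u=0$ because $\xi_u=0$ and $f$ depends on $x$ only. Since $Q^\zeta[f]=\zeta-\xi f_1-f_2=0$, the graph $u=f(x)$ lies on $\mathcal Q^\zeta_{(r)}$, so $0=L|_{u=f}=\tilde L^\zeta|_{u=f}$; maximal rank in $u$ then forces $f=G^\zeta$, whence $\zeta=\xi f_1+f_2=\xi G^\zeta_1+G^\zeta_2$, i.e. $\zeta$ solves the determining PDE and $Q^\zeta\in\mathfrak Q(\mathcal L)$. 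Finally I would observe that $\zeta\mapsto G^\zeta$ and $f\mapsto\xi f_1+f_2$ are mutually inverse (the first composition returns $\zeta$ by the determining PDE, the second returns $f$ by uniqueness of the $Q^\zeta$-invariant solution) and that distinct $\zeta$ yield genuinely inequivalent operators $Q^\zeta$ in reduced form, so the correspondence is one-to-one; the equivalence of the two problems follows at once.

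The main obstacle is the converse direction, specifically the verification that an arbitrary solution $u=f(x)$ of $\mathcal L$ lies on $\mathcal Q^\zeta_{(r)}$ for the associated $\zeta$ and that this pins $G^\zeta$ down to $f$ — this is exactly where the maximal-rank hypothesis on $\tilde L^\zeta$ is indispensable. The remaining steps are essentially routine once the coordinates supplied by Theorem~\ref{TheoremOn2DSingularVectorFieldsOfCoOrderK} are in place, the only care needed being the bookkeeping that the bijection carries no residual equivalence ambiguity.
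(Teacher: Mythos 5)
Your argument is correct and takes essentially the same route as the paper, whose proof of this theorem is exactly the discussion preceding its statement: for $\zeta=\zeta(x)$ the associated function $\tilde L^\zeta$ has order $0$, the equation $\tilde L^\zeta=0$ is resolved as $u=G^\zeta(x)$, the conditional invariance criterion collapses to the single $r$th order equation $\zeta=\xi G^\zeta_1+G^\zeta_2$, the ansatz $u=\varphi(\omega)+G^\zeta(x)$ reduces $\mathcal L$ to $\varphi=0$ (giving the unique invariant solution), and the converse is obtained by setting $\zeta:=\xi f_1+f_2$ for a solution $u=f(x)$, just as you do. One small imprecision that does not affect the proof: the restriction of $\omega_{(1,0)}=u_1$ to $\mathcal Q^\zeta_{(r)}$ is still $u_1$ rather than a function of $x$ alone, but since $\check L_{\omega_{(1,0)}}=0$ by hypothesis your conclusion $\ord\tilde L^\zeta=0$ is unaffected.
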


Now we consider the regular values of~$\zeta$ for which the singularity co-order of $Q^\zeta$ 
coincides with the singularity co-order of the whole family~$S$ (and equals 1). 
If $\sco_L Q^\zeta=1$, the parameter-function~$\zeta$ satisfies the regularity condition 
$\tilde L^\zeta_{u_1}\ne0$. 
Therefore, the equation $\tilde L^\zeta=0$ 
which is equivalent to~$\mathcal L$ on the manifold~$\smash{\mathcal Q^\zeta_{(r)}}$ 
can be solved with respect to~$u_1$: $u_1=G^\zeta(x,u)$, 
where the expression for the function $G^\zeta$ depends on the parameter-function~$\zeta$ 
and its derivatives up to order~$r-1$.
Applied to the equation~$\mathcal L$ and the operator~$Q^\zeta$, 
the conditional invariance criterion implies only the equation
\begin{equation}\label{EqDEFor1stOrderSingularRedOpsOfGen2DPDEs}
\zeta_1+\zeta_uG^\zeta-(\xi_1+\xi_uG^\zeta)G^\zeta=\xi G^\zeta_1+G^\zeta_2+\zeta G^\zeta_u
\end{equation}
with respect to the function~$\zeta$.  
Therefore, in this case the system of determining equations consists of 
the single equation~\eqref{EqDEFor1stOrderSingularRedOpsOfGen2DPDEs} and, therefore, is not overdetermined. 
This equation can be rewritten as the compatibility condition 
\[
\zeta_1+\zeta_uG^\zeta-(\xi_1+\xi_uG^\zeta)G^\zeta-\xi(G^\zeta_1+G^\zeta_uG^\zeta)=G^\zeta_2+(\zeta-\xi G^\zeta)G^\zeta_u
\]
of the equations $u_1=G^\zeta$ and $\xi u_1 + u_2=\zeta$ with respect to~$u$. 
The order of~\eqref{EqDEFor1stOrderSingularRedOpsOfGen2DPDEs} equals $r$ 
and hence is greater than the order of the system~$\mathcal S_0$. 
This guarantees (under certain conditions of smoothness, e.g., in the analytical case) 
that the equation~\eqref{EqDEFor1stOrderSingularRedOpsOfGen2DPDEs} has solutions which are not solutions of~$\mathcal S_0$. 
In other words, \emph{the equation~$\mathcal L$ necessarily possesses first co-order singular reduction operators 
which belong to~$S$.}

The results of Section~\ref{SectionOnReductionOperatorsAndParametricFamiliesOfSolutions} imply
that for each first co-order singular reduction operator~$Q$ of the equation~$\mathcal L$  
there exist a one-parametric family of $Q$-invariant solutions of~$\mathcal L$. 
If the equation~$\mathcal L$ admits a co-order one singular module of vector fields, 
the converse statement is true as well.   

\begin{theorem}\label{TheoremOn1parametricSolutionFamiliesAndSingularRedOpsForGen2DPDEs}
Suppose that an equation~$\mathcal L$: $L=0$ possesses a co-order one singular set 
$S=\{Q^\zeta\}$ of vector fields in reduced form $Q^\zeta=\xi\p_1+\p_2+\zeta\p_u$.
Then for any one-parametric family~$\mathcal F$ of solutions of~$\mathcal L$ 
there exist a value of the parameter-function $\zeta=\zeta(x,u)$ such that 
$Q^\zeta$ is a reduction operator of~$\mathcal L$
and each solution from~$\mathcal F$ is invariant with respect to~$Q^\zeta$. 
\end{theorem}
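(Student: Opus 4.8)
The plan is to mimic the converse direction of the evolution-equation argument (Theorem~\ref{TheoremUnitedOnSetsOfSolutionsAndReductionOperatorsWithTau0OfSystemsOfEqlsp}) and of the wave-equation argument (Theorem~\ref{TheoremOn1parametricSolutionFamiliesAndSingularRedOpsForSimpleWEs}), adapted to the general setting with the fixed coefficient~$\xi\in\{0,u\}$. First I would start from a one-parametric family $\mathcal F=\{u=f(x,\varkappa)\}$ of solutions of~$\mathcal L$; without loss of generality the parameter is essential, so $f_\varkappa\ne0$, and I can invert the relation $u=f(x,\varkappa)$ locally to obtain $\varkappa=\Phi(x,u)$ with $\Phi_u\ne0$. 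The natural candidate operator is $Q^\zeta$ with $\zeta:=-(\xi\Phi_1+\Phi_2)/\Phi_u$, which is precisely the coefficient forcing $Q^\zeta[u]=\xi u_1+u_2-\zeta=0$ to hold identically on every member of~$\mathcal F$ (this is the defining property $Q^\zeta\Phi=0$). So by construction $\zeta$ is a smooth function of $(x,u)$ and $Q^\zeta\in S$.

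Next I would check that $Q^\zeta$ actually reduces~$\mathcal L$. The associated ansatz is $u=f(x,\varphi(\omega))$ where $\omega=\omega(x)$ is a first integral of $\xi\omega_1+\omega_2=0$ and $\varphi$ is the new unknown; substituting this ansatz into~$\mathcal L$ and using that $f(x,\varkappa)$ solves~$\mathcal L$ for every constant~$\varkappa$, one sees that $\varphi\equiv\const$ is a solution of the reduced equation, hence the ansatz genuinely reduces~$\mathcal L$ to an ordinary differential equation in~$\varphi$ (in the worst case to $\varphi'=0$ or to a lower-order/algebraic relation). By the criterion of~\cite{Zhdanov&Tsyfra&Popovych1999} already invoked repeatedly in the paper, this is equivalent to $Q^\zeta\in\mathfrak Q(\mathcal L)$, i.e.\ $Q^\zeta$ is a reduction operator. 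Every solution of $\mathcal F$ is $Q^\zeta$-invariant because $Q^\zeta[f(x,\varkappa)]=0$ for each fixed~$\varkappa$, which is exactly how $\zeta$ was defined.

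The one delicate point, and the place I expect the real work to be, is the possibility that the reduced equation degenerates in a way that makes the bijection fail — e.g.\ if $\zeta_u=0$, then (as in Note~\ref{NoteOn1parametricSolutionFamiliesAndSingularRedOpsForSimpleWEsWithFu0} and the $\sco=0$ discussion of Section~\ref{SectionOnReductionOperatorsOfSingularityCoOrder1}) the operator may be of singularity co-order~$0$ or ultra-singular, and then the ``one-parametric family'' may collapse to a single invariant solution or blow up to a functional family. To handle this I would invoke Proposition~\ref{PropositionOnSingularRedOpsAndReducedEqs2DCase} and Corollary~\ref{CorollaryOnSingularRedOpsAndFamiliesOfInvSolutions2DCaseA}: the existence of a genuinely one-parametric family of $Q^\zeta$-invariant solutions forces the essential order of the reduced equation — hence $\wsco_{\mathcal L}Q^\zeta$ — to be exactly~$1$, which is consistent with $Q^\zeta\in S$ since $S$ has co-order one and $\sco_LQ^\zeta\le 1$ automatically. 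The statement as quoted only asserts existence of such a $\zeta$ with $Q^\zeta$ a reduction operator annihilating~$\mathcal F$, so it is not necessary to separate the subcases finely; it suffices to produce $\zeta$ and verify the two claimed properties, noting that the case analysis of singularity co-order is exactly what Section~\ref{SectionOnReductionOperatorsAndParametricFamiliesOfSolutions} already provides. I would close by remarking that the inverse construction (operator $\mapsto$ family) was established in the ``results of Section~\ref{SectionOnReductionOperatorsAndParametricFamiliesOfSolutions}'' paragraph preceding the theorem, so together the two directions give the announced correspondence.
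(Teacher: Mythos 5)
Your construction of $\zeta=-(\xi\Phi_1+\Phi_2)/\Phi_u$ and the verification that every member of~$\mathcal F$ is $Q^\zeta$-invariant coincide with the paper's argument. The gap is in the step where you conclude that $Q^\zeta$ is a reduction operator. The inference ``$f(x,\varkappa)$ solves~$\mathcal L$ for every constant~$\varkappa$, so constant $\varphi$ solves the reduced equation, hence the ansatz genuinely reduces~$\mathcal L$ to an ODE'' is not valid: to have a reduction one needs $L|_{u=f(x,\varphi(\omega))}=\check\lambda\,\check L$ with $\check L$ free of the non-invariant variable, and the vanishing of the substituted expression on constant~$\varphi$ does not provide this factorization. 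For instance, for $L=u_2+u_{11}-x_2u_1$ and $Q=\p_2$ the constants $u=c$ form a one-parametric family of $Q$-invariant solutions, yet $Q$ is not a reduction operator (the substituted expression $\varphi''-x_2\varphi'$ retains essential $x_2$-dependence). This example is not first co-order singular, which is exactly the point: the conclusion only holds because $Q^\zeta$ lies in the co-order one singular set~$S$, and your argument never uses that hypothesis at this step. Moreover, the results you invoke to repair the degenerate cases, Proposition~\ref{PropositionOnSingularRedOpsAndReducedEqs2DCase} and Corollary~\ref{CorollaryOnSingularRedOpsAndFamiliesOfInvSolutions2DCaseA}, both carry the hypothesis ``let $Q$ be a reduction operator of~$\mathcal L$'', so appealing to them to prove that $Q^\zeta$ is a reduction operator is circular.

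The paper closes this gap differently: since $Q^\zeta\in S$ and $S$ has singularity co-order one, $\sco_LQ^\zeta\in\{-1,0,1\}$; the case $\sco_LQ^\zeta=0$ is excluded because then $\mathcal L$ could have at most one $Q^\zeta$-invariant solution, not a one-parametric family; an ultra-singular vector field is automatically a reduction operator; and in the remaining case $\sco_LQ^\zeta=1$ the paper invokes Corollary~\ref{CorollaryOnSingularRedOpsAndFamiliesOfInvSolutions2DCaseC} (under the maximal-rank assumption on $\tilde L^\zeta$ imposed earlier in Section~\ref{SectionOnReductionOperatorsOfSingularityCoOrder1}), whose proof contains the decisive splitting-in-the-transversal-variable argument showing that a one-parametric family of invariant solutions forces the substituted equation to be independent of that variable. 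To fix your proof, replace the ``constants solve it, hence it reduces'' step and the appeal to Proposition~\ref{PropositionOnSingularRedOpsAndReducedEqs2DCase}/Corollary~\ref{CorollaryOnSingularRedOpsAndFamiliesOfInvSolutions2DCaseA} by this case analysis and by Corollary~\ref{CorollaryOnSingularRedOpsAndFamiliesOfInvSolutions2DCaseC}.
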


\begin{proof}
Consider a one-parametric family $\mathcal F=\{u=f(x,\varkappa)\}$ of solutions of~$\mathcal L$. 
The derivative $f_\varkappa$ is nonzero since the parameter $\varkappa$ is essential. 
From $u=f(x,\varkappa)$ we derive $\varkappa=\Phi(x,u)$ 
with some function $\Phi=\Phi(x,u)$, where $\Phi_u\ne0$, 
and then define $\zeta=\zeta(x,u)$ by the formula 
\[\zeta=-\frac{\xi\Phi_1+\Phi_2}{\Phi_u}.\]
Since $f_i=-(\Phi_i/\Phi_u)|_{u=f}$, $i=1,2$, then $\xi|_{u=f} f_1+f_2=\zeta|_{u=f}$, 
i.e., any solution from  $\mathcal F$ is $Q^\zeta$-invariant. 
Then either $Q^\zeta$ is an ultra-singular vector field for~$L$ or $\sco_L Q^\zeta=1$. 
(The case $\sco_L Q^\zeta=0$ is impossible since otherwise the equation~$\mathcal L$ 
could not have a one-parametric family of $Q^\zeta$-invariant solutions.)
Any ultra-singular vector field for~$L$ is a reduction operator of~$\mathcal L$.  
If $\sco_L Q^\zeta=1$ then $Q$ is a reduction operator of~$\mathcal L$ 
in view of Corollary~\ref{CorollaryOnSingularRedOpsAndFamiliesOfInvSolutions2DCaseC}.
\end{proof}

\begin{corollary}\label{CorollaryOnOn1parametricSolutionFamiliesAndFirstCoOrderSingularRedOpsForGen2DPDEs}
Suppose that an equation~$\mathcal L$ possesses a first co-order singular set 
$S=\{Q^\zeta\}$ of vector fields in reduced form $Q^\zeta=\xi\p_1+\p_2+\zeta\p_u$, 
and that no element of~$S$ is ultra-singular for~$\mathcal L$.
Then up to the equivalence of solution families 
there exists a bijection between one-parametric families of solutions of~$\mathcal L$
and its first co-order singular reduction operators belonging to~$S$.
Namely, each operator of this kind corresponds to the family of solutions which are invariant under it. 
The problems of the construction of all one-parametric solution families of the equation~$\mathcal L$ 
and the exhaustive description of its reduction operators of the above form are completely equivalent.
\end{corollary}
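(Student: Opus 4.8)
The plan is to exhibit two maps---one sending a one-parametric family of solutions of $\mathcal L$ to a reduction operator in $S$, the other going back---and to prove that, modulo the equivalence of solution families from Note~\ref{NoteOnEquiv1ParamFamiliesOfSolutions}, they are mutually inverse. For the forward map, given a one-parametric family $\mathcal F=\{u=f(x,\varkappa)\}$ of solutions, Theorem~\ref{TheoremOn1parametricSolutionFamiliesAndSingularRedOpsForGen2DPDEs} furnishes $\zeta=-(\xi\Phi_1+\Phi_2)/\Phi_u$, where $\varkappa=\Phi(x,u)$ resolves $u=f(x,\varkappa)$ (so $\Phi_u\ne0$), such that $Q^\zeta\in\mathfrak Q(\mathcal L)$ and every member of $\mathcal F$ is $Q^\zeta$-invariant. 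By hypothesis $Q^\zeta$ is not ultra-singular, and $\sco_L Q^\zeta=0$ is impossible, since in that case $\mathcal L$ would admit a unique $Q^\zeta$-invariant solution (Corollary~\ref{CorollaryOnSingularRedOpsAndFamiliesOfInvSolutions2DCaseB} with $k=0$, or Theorem~\ref{TheoremOnSolutionsAndZeroOrderSingularRedOpsForGen2DPDEs}), contradicting the existence of the one-parametric family $\mathcal F$. Hence $\sco_L Q^\zeta=1$, i.e.\ $Q^\zeta$ is a first co-order singular reduction operator in $S$. Replacing $\mathcal F$ by an equivalent family in the sense of Note~\ref{NoteOnEquiv1ParamFamiliesOfSolutions} composes $\Phi$ with a function of $\varkappa$ alone and leaves $\zeta$ unchanged, so the map is well defined on equivalence classes.

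For the reverse map, let $Q=Q^\zeta\in S$ be a first co-order singular reduction operator. Under the standing maximal-rank assumption of this section, Corollary~\ref{CorollaryOnSingularRedOpsAndFamiliesOfInvSolutions2DCaseB} with $k=1$ produces a one-parametric family $\mathcal F_Q$ of $Q$-invariant solutions of $\mathcal L$ containing \emph{all} $Q$-invariant solutions. Any two such families coincide as sets of functions and hence are equivalent in the sense of Note~\ref{NoteOnEquiv1ParamFamiliesOfSolutions}, so $\mathcal F_Q$ is well defined up to that equivalence, and each solution of $\mathcal F_Q$ is $Q^\zeta$-invariant by construction.

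I would then verify that the two maps are mutually inverse. Starting from $\mathcal F$, passing to $Q^\zeta$ and then to $\mathcal F_{Q^\zeta}$: every member of $\mathcal F$ is $Q^\zeta$-invariant, so $\mathcal F\subseteq\mathcal F_{Q^\zeta}$; as both are one-parametric families, they coincide up to reparametrization. Starting from $Q=Q^\zeta$, passing to $\mathcal F_Q=\{u=f(x,\varkappa)\}$ and resolving $\varkappa=\Phi(x,u)$ with $\Phi_u\ne0$: since each solution of $\mathcal F_Q$ is $Q^\zeta$-invariant we have $\xi f_1+f_2=\zeta|_{u=f}$, equivalently $\zeta=-(\xi\Phi_1+\Phi_2)/\Phi_u$ along every such solution; because $f_\varkappa\ne0$ the solutions of $\mathcal F_Q$ sweep out an open neighbourhood in $(x,u)$-space, so this identity holds identically and the reverse map returns exactly $Q^\zeta$. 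Both maps being constructive, the stated equivalence of the two problems (describing all one-parametric solution families versus describing all first co-order singular reduction operators in $S$) follows at once.

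The main obstacle I expect is the last reconstruction step: showing that $\zeta$ is genuinely recovered from $\mathcal F_Q$. This relies on two facts, namely that the $Q$-invariant solutions fill an open subset of $(x,u)$-space (guaranteed here by $f_\varkappa\ne0$, equivalently $\Phi_u\ne0$) and that there are no $Q$-invariant solutions outside $\mathcal F_Q$ (the content of Corollary~\ref{CorollaryOnSingularRedOpsAndFamiliesOfInvSolutions2DCaseB}, which is precisely what prevents the reverse map from losing information). Carrying everything only up to the equivalence of solution families of Note~\ref{NoteOnEquiv1ParamFamiliesOfSolutions} is a bookkeeping matter that must be kept straight but poses no genuine difficulty.
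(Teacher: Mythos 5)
Your proposal is correct and follows essentially the route the paper intends for this corollary (which it leaves without an explicit proof): the backward direction comes from Theorem~\ref{TheoremOn1parametricSolutionFamiliesAndSingularRedOpsForGen2DPDEs} together with the exclusion of the co-order~$0$ case, the forward direction from the Section~\ref{SectionOnReductionOperatorsAndParametricFamiliesOfSolutions} results under the section's standing maximal-rank assumption, and your reconstruction of $\zeta$ from the invariant family via $\zeta=-(\xi\Phi_1+\Phi_2)/\Phi_u$ is exactly the argument the paper makes explicit in the evolution and wave-equation cases. The mutual-inverse and well-definedness checks you add are sound bookkeeping in the paper's local setting.
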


This bijection is broken in the presence of ultra-singular vector fields.

The above relation between one-parametric families of solutions and first co-order singular reduction operators 
can be stated as a connection between the initial equation~$\mathcal L$ 
and the determining equation~\eqref{EqDEFor1stOrderSingularRedOpsOfGen2DPDEs}.  

\begin{corollary}\label{CorollaryOnReductionOfDetEqsForFirstCoOrderSingularRedOpsForGen2DPDEs}
Suppose that an equation~$\mathcal L$: $L=0$ possesses a first co-order singular set 
$S=\{Q^\zeta\}$ of vector fields in reduced form $Q^\zeta=\xi\p_1+\p_2+\zeta\p_u$.
Then the determining equation for values of $\zeta$ 
corresponding to first co-order singular reduction operators of~$\mathcal L$ 
is reduced by composition of 
the B\"acklund transformation 
\[\xi\Phi_1+\Phi_2+\zeta\Phi_u=0,\quad \Phi_1+G^\zeta\Phi_u=0\]
where $\Phi$ is a function of $(x,u)$, and the hodograph transformation 
\begin{gather*}
\mbox{the new independent variables:}\qquad\tilde x_1=x_1, \quad \tilde x_2=x_2, \quad \varkappa=\Phi, 
\\  
\lefteqn{\mbox{the new dependent variable:}}\phantom{\mbox{the new independent variables:}\qquad }\tilde u=u 
\end{gather*}
to the initial equation~$\mathcal L$ for the function $\tilde u=\tilde u(\tilde x,\varkappa)$ 
with $\varkappa$ playing the role of a parameter.
\end{corollary}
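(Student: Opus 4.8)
The plan is to mirror the proof of Corollary~\ref{CorollaryOnReductionOfDetEqsForSingularRedOpsOfSimpleWEs}, with the wave-equation data replaced by the general objects $\xi$ and $G^\zeta$; throughout, the standing hypothesis that $\tilde L^\zeta$ has maximal rank in $u_1$ (so that the representation $u_1=G^\zeta(x,u)$ and hence the determining equation~\eqref{EqDEFor1stOrderSingularRedOpsOfGen2DPDEs} are available) is in force. First I would take an arbitrary solution $\zeta=\zeta(x,u)$ of~\eqref{EqDEFor1stOrderSingularRedOpsOfGen2DPDEs}; for such $\zeta$ one has $\sco_L Q^\zeta=1$, since the ultra-singular case is handled separately and $\sco_L Q^\zeta=0$ would be incompatible with the eventual one-parametric family of $Q^\zeta$-invariant solutions. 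I then consider the overdetermined system $\xi\Phi_1+\Phi_2+\zeta\Phi_u=0$, $\Phi_1+G^\zeta\Phi_u=0$ for $\Phi=\Phi(x,u)$, which after solving for $\Phi_1$ and $\Phi_2$ says that $\Phi$ is a common first integral of the vector fields $\p_1+G^\zeta\p_u$ and $\p_2+(\zeta-\xi G^\zeta)\p_u$.

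Since these two vector fields are linearly independent and their $\p_1$- and $\p_2$-components are constants, their Lie bracket is proportional to $\p_u$ and therefore lies in their span only if it vanishes. Computing the bracket, one finds it equals $\bigl(\zeta_1+\zeta_uG^\zeta-(\xi_1+\xi_uG^\zeta)G^\zeta-\xi(G^\zeta_1+G^\zeta_uG^\zeta)-G^\zeta_2-(\zeta-\xi G^\zeta)G^\zeta_u\bigr)\p_u$, whose vanishing is exactly the determining equation~\eqref{EqDEFor1stOrderSingularRedOpsOfGen2DPDEs} written in the compatibility form recorded immediately after it. By the Frobenius theorem the $\Phi$-system is thus compatible; I choose a nonconstant solution $\Phi$, so that locally $\Phi_u\ne0$ and one has $G^\zeta=-\Phi_1/\Phi_u$, $\zeta=-(\xi\Phi_1+\Phi_2)/\Phi_u$.

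Next I would apply the hodograph transformation $\tilde x=x$, $\varkappa=\Phi$, $\tilde u=u$, regarding $\tilde u$ as a function of $(\tilde x,\varkappa)$. Differentiating the identity $\varkappa\equiv\Phi(\tilde x,\tilde u(\tilde x,\varkappa))$ gives $\tilde u_{\tilde x_1}=-\Phi_1/\Phi_u=G^\zeta(\tilde x,\tilde u)$, $\tilde u_{\tilde x_2}=-\Phi_2/\Phi_u=\zeta(\tilde x,\tilde u)-\xi(\tilde x,\tilde u)G^\zeta(\tilde x,\tilde u)$, together with $\tilde u_\varkappa=1/\Phi_u\ne0$. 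For each fixed $\varkappa$ this pair of first-order equations is precisely the system $u_1=G^\zeta$, $\xi u_1+u_2=\zeta$; by the construction of $G^\zeta$ (the equation $\tilde L^\zeta=0$ is equivalent to $\mathcal L$ on $\mathcal Q^\zeta_{(r)}$) together with the identification of $Q^\zeta$-invariant solutions of $\mathcal L$ from Section~\ref{SectionOnReductionOperatorsAndParametricFamiliesOfSolutions}, its solutions are exactly the $Q^\zeta$-invariant solutions of $\mathcal L$. Hence $\tilde u(\cdot,\varkappa)$ solves $\mathcal L$ for every $\varkappa$, and $\varkappa$ is essential. For the converse I would invoke the proof of Theorem~\ref{TheoremOn1parametricSolutionFamiliesAndSingularRedOpsForGen2DPDEs}: from a one-parametric family $\{u=f(x,\varkappa)\}$ of solutions of $\mathcal L$ one resolves $\varkappa=\Phi(x,u)$ with $\Phi_u\ne0$, puts $\zeta=-(\xi\Phi_1+\Phi_2)/\Phi_u$, obtains a first co-order singular reduction operator $Q^\zeta$ and hence a solution of~\eqref{EqDEFor1stOrderSingularRedOpsOfGen2DPDEs}; since each member of the family is $Q^\zeta$-invariant and solves $\mathcal L$, one has $f_1=G^\zeta|_{u=f}$, and as the family sweeps out an open set this forces $G^\zeta=-\Phi_1/\Phi_u$ identically, so the B\"acklund relations hold and $\zeta$ is recovered from the family by the inverse of the stated composition.

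I expect the main obstacle to be the bookkeeping in the bracket/compatibility computation that identifies the integrability condition of the $\Phi$-system with~\eqref{EqDEFor1stOrderSingularRedOpsOfGen2DPDEs}, and the verification that the transformed first-order system is indeed the one characterizing $Q^\zeta$-invariant solutions of $\mathcal L$; once these two identifications are secured, the remainder is the same chain of substitutions as in Corollary~\ref{CorollaryOnReductionOfDetEqsForSingularRedOpsOfSimpleWEs}.
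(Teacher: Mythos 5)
Your proposal is correct and follows essentially the same route as the paper's proof: apply the Frobenius theorem to the $\Phi$-system (whose integrability condition is exactly the determining equation~\eqref{EqDEFor1stOrderSingularRedOpsOfGen2DPDEs} in its compatibility form), pick a nonconstant solution $\Phi$ with $\Phi_u\ne0$, apply the hodograph transformation to obtain $\tilde u_{\tilde x_1}=G^\zeta$ and $\xi\tilde u_{\tilde x_1}+\tilde u_{\tilde x_2}=\zeta$, conclude that $\tilde u(\cdot,\varkappa)$ is a $Q^\zeta$-invariant solution of~$\mathcal L$ with essential parameter $\varkappa$, and refer back to the proof of Theorem~\ref{TheoremOn1parametricSolutionFamiliesAndSingularRedOpsForGen2DPDEs} for the inverse direction. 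Your explicit Lie-bracket verification and the more detailed justification that the transformed first-order system yields solutions of~$\mathcal L$ are just elaborations of steps the paper states more briefly.
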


\begin{proof}
We fix an arbitrary solution $\zeta$ of equation~\eqref{EqDEFor1stOrderSingularRedOpsOfGen2DPDEs}, 
which additionally satisfies the condition $\tilde L^\zeta_{u_1}\ne0$. 
In view of the Frobenius theorem, the equations $\xi\Phi_1+\Phi_2+\zeta\Phi_u=0$ and $\Phi_1+G^\zeta\Phi_u=0$
are compatible with respect to the function $\Phi=\Phi(x,u)$ since 
their compatibility condition coincides with~\eqref{EqDEFor1stOrderSingularRedOpsOfGen2DPDEs}
and hence is identically satisfied. 
We choose a nonconstant solution~$\Phi$ of both these equations. 
Then $\Phi_u\ne0$ and 
\[
\zeta=-\xi\frac{\Phi_1}{\Phi_u}+\frac{\Phi_2}{\Phi_u}, \quad G^\zeta=-\frac{\Phi_1}{\Phi_u}.
\] 
After the hodograph transformation, 
the latter equations take the form 
$\xi\tilde u_{\tilde x_1}+\tilde u_{\tilde x_2}=\zeta(\tilde x,\tilde u)$ and 
$\tilde u_{\tilde x_1}=G^\zeta(\tilde x,\tilde u)$. 
This directly implies that for any value of $\varkappa$ the function $\tilde u=\tilde u(\tilde x,\varkappa)$ satisfies 
the equation~$\mathcal L$. 
The parameter $\varkappa$ is essential in $\tilde u$ since $\tilde u_\varkappa=1/\Phi_u\ne0$.

It follows from the proof of Theorem~\ref{TheoremOn1parametricSolutionFamiliesAndSingularRedOpsForGen2DPDEs} 
that the application of the inverse transformations to a one-parametric family of solutions of the initial equation~$\mathcal L$ 
results in a solution of equation~\eqref{EqDEFor1stOrderSingularRedOpsOfGen2DPDEs}  
if the defined value of~$\zeta$ satisfies the regularity condition $\tilde L^\zeta_{u_1}\ne0$.
\end{proof}

\subsection*{Acknowledgements}

The authors are grateful to Vyacheslav Boyko for useful discussions and interesting comments.
MK was supported by START-project Y237 of the Austrian Science Fund. 
The research of ROP was supported by the Austrian Science Fund (FWF), project P20632.

\end{document}